\newcommand{\Ctwo}{\ensuremath \mathsf{Co}_2}
\newcommand{\Mathieu}{\ensuremath 2^{11}\colon\mathsf{M}_{24}}
\newcommand{\Suz}{\ensuremath 3.\mathsf{Suz}.2}
\newcommand{\upa}[1]{\ensuremath \smash \uparrow^{#1}}
\newcommand{\BS}{\ensuremath \mathbf{BS}}
\newcommand{\PS}{\ensuremath \mathbf{PS}}
\theoremstyle{plain}\newtheorem{Theorem}{Theorem}[section]
\theoremstyle{plain}\newtheorem{Conjecture}[Theorem]{Conjecture}
\theoremstyle{plain}\newtheorem{Corollary}[Theorem]{Corollary}
\theoremstyle{plain}\newtheorem{Lemma}[Theorem]{Lemma}
\theoremstyle{plain}
\theoremstyle{definition}\newtheorem{Definition}[Theorem]{Definition}
\theoremstyle{definition}
\theoremstyle{definition}
\theoremstyle{definition}
\theoremstyle{definition}
\theoremstyle{definition}\newtheorem{Notation}[Theorem]{Notation}
\theoremstyle{definition}
\theoremstyle{definition}
\theoremstyle{definition}\newtheorem{Strategy}[Theorem]{Strategy}
\theoremstyle{definition}
\theoremstyle{definition}
\theoremstyle{definition}
\theoremstyle{definition}\newtheorem{Notation/Definition}[Theorem]{Notation/Definition}
\theoremstyle{definition}\newtheorem{noth}[Theorem]{}
\def\Z{{\mathbb Z}}
\def\N{{\mathbb N}}
\def\dim{\mathrm{dim}}           
\def\Ext{\mathrm{Ext}}           
\def\Hom{\mathrm{Hom}}
\def\Irr{\mathrm{Irr}}           
\def\IBr{\mathrm{IBr}}
\newcommand{\gen}[1]{\langle #1 \rangle}
\begin{document}

\begin{center}
   {\Large\bf  Brou{\'e}'s abelian defect group conjecture and
   $3$-decomposition
\\ \vspace*{0.3em} numbers 
   of the sporadic simple Conway group {\sf Co}$_1$}
\end{center}

\bigskip
\bigskip

\begin{center} {\large
        {\bf Shigeo Koshitani}$^{\text a,*}$, 
        {\bf J{\"u}rgen M{\"u}ller}$^{\text b}$,
        {\bf Felix Noeske}$^{\text c}$  
} \end{center}

\bigskip

\begin{center} {\it
${^{\mathrm{a}}}$Department of Mathematics, Graduate School of Science, \\
Chiba University, Chiba, 263-8522, Japan \\
${^{\mathrm{b, \, c}}}$Lehrstuhl D f{\"u}r Mathematik,
RWTH Aachen University, 52062 Aachen, Germany}
\end{center}

\footnote{
$^*$ Corresponding author. \\
\indent {\it E-mail addresses:} 
koshitan@math.s.chiba-u.ac.jp (S.~Koshitani), \\
juergen.mueller@math.rwth-aachen.de (J.~M{\"u}ller),
Felix.Noeske@math.rwth-aachen.de (F.~Noeske).  
}
 
\hrule 

\medskip

{\small\noindent{\bf Abstract}

\medskip\noindent
In the representation theory of finite groups, 
Brou{\'e}'s abelian defect group conjecture says that for any prime $p$, 
if a $p$-block $A$ of a finite group $G$ has an abelian defect group $P$, 
then $A$ and its Brauer corresponding block $B$ of the normaliser $N_G(P)$ 
of $P$ in $G$ are derived equivalent. We prove that Brou{\'e}'s conjecture,
and even Rickard's splendid equivalence conjecture, are true for the 
unique $3$-block $A$ of defect $2$ of the sporadic simple Conway group 
{\sf Co}$_1$, implying that both conjectures hold for all $3$-blocks
of {\sf Co}$_1$. To do so, we determine the $3$-decomposition numbers
of $A$, and we actually show that $A$ is Puig equivalent to the principal
$3$-block of the symmetric group $\mathfrak S_6$ of degree $6$.

\medskip\noindent
{\it Keywords:} Brou{\'e}'s conjecture; abelian defect group;
splendid derived equivalence; sporadic simple Conway group;
$3$-decomposition numbers.}

\medskip\noindent
{\it MSC:} 20C, 20D

\medskip

\hrule

\bigskip

\section{Introduction and notation}\label{intro}

\noindent
In the representation theory of finite groups, one of the
most important and interesting problems
is to give an affirmative answer to a conjecture
which was introduced by Brou{\'e} around 1988
\cite{Broue1990}.
He actually conjectures the following, where 
the various notions of equivalences used are recalled more
precisely in {\bf\ref{NotationEquivalences}}:

\begin{Conjecture} 
[Brou\'e's Abelian Defect Group Conjecture \cite{Broue1990}]
\label{ADGC}
Let $(\mathcal K, \mathcal O, k)$ be a
splitting $p$-modular system, where $p$ is a prime,
for all subgroups of a finite group $G$. 
Assume that $A$ is a block algebra of
$\mathcal OG$ with a defect group $P$ and that $B$ is a
block algebra of $\mathcal ON_G(P)$ such that $B$ is the
Brauer correspondent of $A$, where $N_G(P)$ is the normaliser of
$P$ in $G$. Then $A$ and $B$ should be derived equivalent
provided $P$ is abelian.
\end{Conjecture}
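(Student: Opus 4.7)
The plan is to attack Brou\'e's conjecture by the two-step strategy that has become standard for local--global conjectures in modular representation theory: first reduce to quasi-simple groups, then verify case by case using the classification of finite simple groups. The present paper supplies one such case, and the proposal below sketches how its contribution fits into a hypothetical proof of the full conjecture.

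\emph{Step 1: Reduction to quasi-simple groups.} Given a $p$-block $A$ of $\mathcal{O}G$ with abelian defect group $P$, I would choose a minimal normal subgroup $N\trianglelefteq G$ and a block $a$ of $\mathcal{O}N$ covered by $A$, and analyse the stabiliser $T := G_a$. Clifford theory for blocks together with a Fong--Reynolds reduction shows that $A$ is Puig equivalent to the unique block of $\mathcal{O}T$ covering $a$ that corresponds to $A$, and both the Brauer correspondent and the abelian-defect hypothesis transfer along this equivalence. By induction on $|G|$ one may then assume $G$ has a unique minimal normal subgroup. After separating the cases where this subgroup is an elementary abelian $p$-group (handled directly since $P$ is abelian), an elementary abelian $p'$-group (passing to a central extension), or a direct product of isomorphic non-abelian simple groups, the conjecture reduces to a suitably equivariant splendid/Puig refinement for quasi-simple groups, in the spirit of the Navarro--Sp\"ath reduction for the McKay conjecture.

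\emph{Step 2: Verification for quasi-simple groups.} For each family in the classification one argues separately. For symmetric and alternating groups, Chuang--Rouquier's derived equivalences arising from $\mathfrak{sl}_2$-categorifications settle the conjecture. For groups of Lie type in non-defining characteristic, Brou\'e--Michel's geometric conjecture predicts the splendid tilting complex as the $\ell$-adic cohomology of a suitable Deligne--Lusztig variety; one would verify it via $d$-Harish-Chandra theory, reducing to a generic statement about Hecke algebras attached to complex reflection groups. Groups of Lie type in defining characteristic with abelian defect are small enough for direct analysis. For the sporadic groups and the residual small exceptions one proceeds case by case by constructing an explicit tilting complex --- exactly as done in the present paper for the defect-$2$ block of $\Cone$.

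The \emph{main obstacle}, and the reason the conjecture has been open since 1988, lies in the second half of Step 2: Brou\'e--Michel's geometric construction is known to realise the expected derived equivalence only for principal blocks, for certain unipotent blocks of small rank, and in a handful of isolated situations. Step 1 is also not yet complete, since a full reduction of the \emph{splendid} form --- compatible with fusion systems and with the gluing of equivalences along chains of normalisers --- to the quasi-simple case has not been established, despite substantial work by Marcus, Harris, Rickard, and others. The present paper closes one more sporadic case; bridging either of these two gaps would require genuinely new ideas.
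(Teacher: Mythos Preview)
The statement you were asked to prove is not a theorem but an open conjecture; the paper states it as Conjecture~\ref{ADGC} and does \emph{not} prove it in general. There is therefore no proof in the paper to compare your attempt against. What the paper actually establishes is the special case recorded as Theorem~\ref{MainTheorem}: the conjecture holds for the unique $3$-block of defect $2$ of the sporadic group $\Cone$, and hence (Corollary~\ref{ADGCforCo1forAllBlocks}) for all $3$-blocks of $\Cone$.

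You clearly recognise this, and your proposal is not a proof but a research programme: reduce to quasi-simple groups, then run through the classification. That outline is broadly reasonable as a description of how the community expects the conjecture might eventually be settled, and you are honest that both steps are currently incomplete. But as a \emph{proof} it has a genuine and acknowledged gap at its core: neither the reduction of the splendid form of the conjecture to quasi-simple groups, nor the verification for groups of Lie type via Deligne--Lusztig varieties, is available in the literature. These are not technical details to be filled in; they are the substance of the problem. So your proposal does not prove the conjecture, and you say as much yourself.

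In short: the paper contains no proof of Conjecture~\ref{ADGC}, your proposal is a strategic sketch rather than a proof, and you correctly identify the paper's role as supplying one further sporadic case within Step~2 of that sketch.
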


\noindent
In fact, a stronger conclusion is expected:

\begin{Conjecture}
[Rickard's Splendid Equivalence Conjecture \cite{Rickard1996,Rickard1998}]
\label{RickardConjecture}
Keeping the notation of {\bf\ref{ADGC}}, 
and still supposing $P$ to be abelian,
then there should be a splendid derived equivalence between
the block algebras $A$ of $\mathcal OG$ and $B$ of $\mathcal ON_G(P)$.
\end{Conjecture}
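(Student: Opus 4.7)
The target reduces, per the abstract, to showing that the unique $3$-block $A$ of $\Cone$ of defect $2$ is Puig equivalent to the principal $3$-block of $\mathfrak S_6$; since Puig equivalence is in particular a splendid Morita equivalence, this one statement settles both \ref{ADGC} and \ref{RickardConjecture} for $A$ simultaneously, while the remaining $3$-blocks of $\Cone$ have cyclic or trivial defect, for which the two conjectures are classical. The plan has three stages: pin down the local $3$-structure, compute the $3$-decomposition matrix of $A$, and promote the resulting numerical coincidence with the principal block of $\mathfrak S_6$ to a Puig equivalence.

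First I would identify the local data. From the ATLAS character table of $\Cone$ together with the known maximal-subgroup lattice --- the subgroups $\Mathieu$ and $\Suz$ singled out by the macros at the top of the source are the relevant ones --- I would verify that $A$ has defect group $P \cong C_3 \times C_3$, compute $N_G(P)$ inside $\Cone$, determine the fusion system $\CF = \CF_P(A)$, and identify the Brauer correspondent $B$ as a concrete block of $\CO N_G(P)$. By comparing inertial quotients and their action on $P$, I would then check that $\CF$ coincides with the fusion of $\mathfrak S_6$ on its Sylow $3$-subgroup, since matching fusion is the first necessary test for Puig equivalence with the principal block of $\mathfrak S_6$.

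Next I would compute the $3$-decomposition matrix of $A$. The ordinary characters in $A$ are extracted from the ATLAS by distributing irreducibles into blocks via their central characters. To pin down the columns --- that is, the projective indecomposable characters --- I would combine three tools: (a) the perfect-isometry shape predicted by the principal block of $\mathfrak S_6$, which fixes the number of simples and the coarse pattern of the matrix; (b) explicit induction of projectives from well-understood subgroups, most notably $\Suz$ and $\Mathieu$, whose $3$-modular character theory is already available; and (c) fixed-point condensation in large permutation modules and tensor products, computed with the MeatAxe, so as to actually construct simple $A$-modules and resolve the remaining ambiguities. The matrix obtained should coincide, up to a choice of basic set, with the well-known decomposition matrix of the principal $3$-block of $\mathfrak S_6$.

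Finally I would promote the numerical match to a Puig equivalence. With $P$, $\CF$, and the decomposition matrix of $A$ in hand, one constructs a splendid stable equivalence between $A$ and its Brauer correspondent $B$ via the Brauer-indecomposable Scott module $\mathrm{Sc}(G\times N_G(P)^{\mathrm{op}},\Delta P)$, and then lifts it to a splendid Morita equivalence by Okuyama's glueing of local tilting complexes, before transporting through the analogous equivalence for $\mathfrak S_6$, which is already known. Because the source of the resulting bimodule is a trivial-source $\Delta P$-module, and for $P \cong C_3 \times C_3$ every such source is endopermutation, the equivalence is automatically Puig. The main obstacle lies squarely in stage (ii): the $3$-decomposition numbers of $\Cone$ are genuinely nontrivial and extracting them requires substantial explicit computation with large representations of $\Cone$. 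Once that matrix and the source-algebra data are secured, the comparison with $\mathfrak S_6$ is comparatively formal.
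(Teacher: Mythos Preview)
Your overall architecture matches the paper's: build a splendid stable equivalence between $A$ and its Brauer correspondent $B$, identify $B$ with the principal block $B'$ of $N_{\mathfrak S_6}(P)$, compose with the known stable equivalence between $B'$ and the principal block $A'$ of $\mathfrak S_6$, and then upgrade the composite to a Puig equivalence. But two of your three stages diverge from what the paper does, and in ways that leave real gaps.

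In stage (ii) you propose to compute the full decomposition matrix of $A$ before any block theory. The authors tried exactly this and could not finish: basic sets, induction of projectives from $\Suz$ and $\Mathieu$, and tensoring left a residual parameter $\alpha\in\{0,1,2\}$ in the $S_3$-column that character theory and condensation alone did not resolve. The paper only pins down $\alpha=0$ \emph{after} the stable equivalence with $A'$ is in hand, by transporting a hypothetical $\Ext^1_A(S_4,S_3)\neq 0$ to $\Ext^1_{A'}(6,1b)=0$; so the logical flow is the reverse of what you sketch. In stage (iii), your lifting mechanism is off: Okuyama's gluing of local tilting complexes produces a splendid \emph{derived} equivalence, not a Morita equivalence, and your remark about endopermutation sources does not close that gap. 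The paper instead uses Linckelmann's criterion: one shows that the stable-equivalence functor $F$ sends each simple $A$-module to a simple $A'$-module, and then $F$ is automatically a Morita, hence Puig, equivalence. The substantive work you omit is precisely this step: the paper locates concrete trivial-source $A$-modules as block components of permutation modules on cosets of maximal subgroups (e.g.\ $2^{1+8}_+.O_8^+(2)$, $U_6(2).\mathfrak S_3$, $2^{2+12}{:}(\mathfrak A_8\times\mathfrak S_3)$), determines their Loewy structure from the partial decomposition matrix and character values on $3A$/$3B$, matches their images under $F$ against the classified trivial-source $A'$-modules, and then applies a stripping-off argument to extract $F(S_i)$ one at a time.
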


\noindent
Conjectures {\bf\ref{ADGC}} and {\bf\ref{RickardConjecture}}
have been verified for several cases,
albeit the general conjecture is widely open still.
For an overview see \cite{ChuangRickard}; in particular,
by \cite{Linckelmann1991,Rickard1989,Rouquier1995,Rouquier1998} 
{\bf\ref{ADGC}} and {\bf\ref{RickardConjecture}} are
proved for blocks with cyclic defect groups in arbitrary 
characteristic.

Moreover, it is shown in \cite[(0.2)Theorem]{KoshitaniKunugi2002}
that conjectures {\bf\ref{ADGC}} and {\bf\ref{RickardConjecture}} hold for 
the principal block algebra of an arbitrary finite group 
when the defect group is elementary abelian of order $9$.
In view of the strategy used in \cite{KoshitaniKunugi2002},
and of a possible future theory reducing conjectures {\bf\ref{ADGC}} and 
{\bf\ref{RickardConjecture}} to the quasi-simple groups,
it seems worthwhile to proceed with this class of groups,
as far as non-principal $3$-blocks with elementary abelian 
defect group of order $9$ are concerned. Indeed, for these cases 
there are partial results already known, see
\cite{KessarLinckelmann2010,KoshitaniKunugiWaki2002,
      KoshitaniKunugiWaki2004,KoshitaniKunugiWaki2008,
      KoshitaniMueller2010,Kunugi,
      KoshitaniMuellerNoeske2013a,KoshitaniMuellerNoeske2013b,
      MuellerSchaps2008}
for instance; all the non-principal
$3$-blocks with elementary abelian defect group of order $9$
for the sporadic simple groups and their covering groups are
given in \cite{Noeske2008}.

\bigskip
\noindent
The present paper is another step in that programme, 
our main result being the following:

\begin{Theorem}\label{MainTheorem}
Let $G$ be the sporadic simple Conway group {\sf Co}$_1$, and 
let $(\mathcal K, \mathcal O, k)$ be a splitting $3$-modular
system for all subgroups of $G$.
Suppose that $A$ is the unique block algebra of $\mathcal OG$
with elementary abelian defect group $P = C_3 \times C_3$ of order $9$,
and that $B$ is the block algebra
of $\mathcal O N_G(P)$ such that $B$ is the Brauer correspondent
of $A$. Then, $A$ and $B$ are splendidly derived
equivalent, hence Conjectures 
{\bf\ref{ADGC}} and {\bf\ref{RickardConjecture}} of
Brou{\'e} and Rickard hold for the block algebra $A$.
\end{Theorem}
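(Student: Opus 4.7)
The overall strategy is to prove the stronger statement announced in the abstract, namely that $A$ is Puig equivalent to the principal $3$-block $A_0$ of $\mathfrak{S}_6$, and then deduce Theorem \ref{MainTheorem} from it. Indeed, a Puig (source algebra) equivalence descends to a Puig equivalence between the Brauer correspondents, giving one between $B$ and the principal $3$-block $B'$ of $\CO N_{\mathfrak{S}_6}(Q)$, where $Q$ is a Sylow $3$-subgroup of $\mathfrak{S}_6$. Combined with the splendid derived equivalence $A_0 \simeq B'$ already proved in \cite{KoshitaniKunugi2002}, the desired splendid derived equivalence $A \simeq B$ then follows by composition.

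First I would determine the $3$-local data of $G = \Cone$. Using the ATLAS, Noeske's list \cite{Noeske2008}, and the known maximal subgroups of $\Cone$, I identify a convenient $3$-local overgroup of $P$, such as $\Suz$, in which $P \cong C_3\times C_3$ becomes visible, and compute $C_G(P)$ and $N_G(P)$. Extracting the inertial quotient $E = N_G(P)/PC_G(P)$, I expect $E \cong D_8$ acting faithfully on $P$ as a Sylow $2$-subgroup of $\GL_2(3)$; this matches the local structure of the principal $3$-block of $\mathfrak{S}_6$, so the associated fusion systems on $P$ coincide. This fixes a numerical blueprint (number of ordinary and modular irreducibles, shape of Cartan and decomposition matrices) that $A$ must realise if the desired Puig equivalence is to exist.

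Next I would compute the $3$-decomposition matrix of $A$. Starting from the ATLAS character table of $\Cone$ restricted to $A$, together with indecomposable projective characters obtained both via the Brauer correspondence from $B$ and via induction from the $3$-local overgroup $\Suz$, I expect most decomposition numbers to be pinned down directly. Any remaining ambiguities would be resolved by MeatAxe and fixed-point condensation computations for natural modular representations of $\Cone$, along the lines of \cite{KoshitaniMueller2010, KoshitaniMuellerNoeske2013a, KoshitaniMuellerNoeske2013b}. The target outcome is a decomposition matrix that, after a suitable relabelling of the simples, coincides with that of $A_0$.

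The genuinely hard step, and the one I expect to be the main obstacle, is upgrading this numerical and local match into an actual Puig equivalence: coincidence of decomposition matrices alone yields at best a candidate Morita equivalence, not a source algebra isomorphism. To close this gap I would follow the Okuyama/Rouquier-style approach used in \cite{KoshitaniKunugi2002, KoshitaniKunugiWaki2002, KoshitaniKunugiWaki2004}. Namely, construct an $(A_0, A)$-bimodule $M$ as the image of a suitable source idempotent on a $\Delta P$-projective $p$-permutation bimodule built from the common local data; verify by character calculation over $\CK$ that $M \tenO M^\vee \cong A_0 \oplus (\text{projective})$, giving a stable equivalence of Morita type; and then promote this to a Morita equivalence by checking that $M$ sends simples to simples, which is forced by the decomposition matrix obtained in the previous step. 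Since $M$ is a $p$-permutation bimodule and the fusion systems on $P$ agree, the resulting Morita equivalence is automatically a Puig equivalence. Composing with \cite{KoshitaniKunugi2002} yields Theorem \ref{MainTheorem}.
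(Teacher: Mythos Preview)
Your overall architecture---prove a Puig equivalence between $A$ and the principal $3$-block $A'$ of $\mathfrak{S}_6$, then compose with Okuyama's splendid equivalence for $\mathfrak{S}_6$---is exactly the paper's. However, two of your steps, as stated, would not go through, and the logical order you propose is inverted relative to what is actually achievable.

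First, the decomposition matrix of $A$ cannot be fully computed by the character-theoretic and condensation methods you describe; the paper, using precisely those tools, is only able to pin it down up to an unknown $\alpha\in\{0,1,2\}$ in one column (Lemma~\ref{DecompositionMatrixOfA}). The value $\alpha=0$ is settled only \emph{after} the stable equivalence is in place, via an $\Ext^1$ computation transported from $A'$ (Lemmas~\ref{ext} and~\ref{S5}). So you cannot use a completed decomposition matrix as input to the later steps. Second, and more seriously, the assertion that ``$M$ sends simples to simples, which is forced by the decomposition matrix'' is not correct: agreement of decomposition matrices does not by itself imply that a given stable equivalence maps simples to simples. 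This is the genuine crux of the paper (all of \S\ref{image}): one must exhibit specific trivial-source $A$-modules with prescribed characters, vertices and Loewy structure, identify their images under the stable equivalence among the explicitly listed trivial-source $A'$-modules, and then apply a stripping-off argument simple by simple. Your proposal also glosses over how the stable equivalence itself is obtained: in the paper it is not produced by a character calculation on a hand-built bimodule, but via Linckelmann's gluing theorem \cite[3.1]{Linckelmann2001}, which requires first proving a Puig equivalence between the local blocks $A_Q$ and $B_Q$ (done by passing to central quotients and invoking the cyclic-defect case for $\mathsf{Suz}$, Lemmas~\ref{Suz2}--\ref{LocalPuig}), together with the direct identification $B\simeq B'$ from K\"ulshammer--Puig structure theory (Lemma~\ref{StructureOfB}); the latter replaces your unjustified claim that a Puig equivalence ``descends'' to the Brauer correspondents.
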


\noindent
As an immediate corollary we get:

\begin{Corollary}\label{ADGCforCo1forAllBlocks}
Brou\'e's abelian defect group conjecture {\bf\ref{ADGC}}, and 
Rickard's splendid equivalence conjecture {\bf\ref{RickardConjecture}} 
are true for the prime $p=3$ and 
for all block algebras of $\mathcal O G$.
\end{Corollary}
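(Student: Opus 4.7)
The plan is to reduce the corollary to a case analysis over the $3$-blocks of $G = \mathsf{Co}_1$, appealing to Theorem \ref{MainTheorem} in one distinguished case and to classical results in the others.

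First I would list all $3$-blocks of $G$ together with their defect groups. Since the ordinary character table of $\mathsf{Co}_1$ is known and is accessible, for instance, via the GAP character table library, this is a routine exercise: the partition of $\Irr(G)$ into $3$-blocks is determined by the central characters modulo $3$, and the defect of each block can be read off from the $3$-adic valuations of the numbers $|G|/\chi(1)$ for $\chi$ in the block. One observes in particular that a Sylow $3$-subgroup of $\mathsf{Co}_1$ is non-abelian of order $3^9$, so the principal $3$-block has non-abelian defect group and therefore lies outside the scope of Conjectures \ref{ADGC} and \ref{RickardConjecture}.

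Next I would inspect which of the remaining blocks have abelian defect group. Combining the data above with the list in \cite{Noeske2008} of non-principal $3$-blocks with elementary abelian defect group of order~$9$ for the sporadic simple groups, the abelian-defect $3$-blocks of $\mathcal O G$ fall into exactly three classes: blocks of defect zero; blocks with cyclic defect group (isomorphic to $C_3$ or $C_9$); and the unique block $A$ of defect $2$ with $P \cong C_3\times C_3$ considered in Theorem \ref{MainTheorem}. For a defect-zero block we have $N_G(1)=G$ and the Brauer correspondent is the block itself, so both conjectures hold trivially. For blocks with cyclic defect group, both conjectures hold by the classical results of Linckelmann, Rickard and Rouquier cited in the introduction (\cite{Linckelmann1991,Rickard1989,Rouquier1995,Rouquier1998}).

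Combining these three cases with Theorem \ref{MainTheorem} covers every $3$-block of $\mathcal O G$ whose defect group is abelian, which is precisely the content of Corollary \ref{ADGCforCo1forAllBlocks}. Given Theorem \ref{MainTheorem}, the only work left is this bookkeeping on the block-theoretic invariants of $\mathsf{Co}_1$, which is entirely routine; so there is no substantive obstacle to overcome here.
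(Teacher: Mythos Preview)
Your proposal is correct and follows essentially the same approach as the paper: enumerate the $3$-blocks of $G$, observe that only the defect-$2$ block $A$ has an abelian non-cyclic defect group, dispose of defect-zero and cyclic-defect blocks by the classical results, and invoke Theorem~\ref{MainTheorem} for $A$. The paper is slightly more precise in that it records the exact outcome of the block computation---there are five $3$-blocks, of defects $9$, $3$, $2$, $1$, $0$, with the first two having non-abelian defect groups---so in particular there is a single cyclic-defect block with defect group $C_3$ (your parenthetical ``$C_3$ or $C_9$'' is unnecessary, as no $C_9$ block occurs).
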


\noindent
Actually, we prove {\bf\ref{MainTheorem}} by proving the following:

\begin{Theorem}\label{MoritaEquivalentToS6}
The block algebra $A$ of $G$ 
and the principal $3$-block algebra $A'$ of the symmetric group
$\mathfrak S_6$ of degree $6$ are Puig equivalent.
\end{Theorem}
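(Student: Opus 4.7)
The plan is to establish the Puig equivalence by combining a comparison of local structures with an explicit construction at the level of source algebras. First I would determine the Brauer correspondent $B \in \mathcal ON_G(P)$ of $A$ by locating $N_G(P)$ inside $\Cone$ via its known maximal subgroup structure and tracing the block idempotents. The expectation is that the inertial quotient $E = N_G(P, e_B)/C_G(P)$ is isomorphic to the inertial quotient $E'$ of the principal $3$-block $B'$ of $N_{\mathfrak{S}_6}(P')$ as a subgroup of $\Aut(P) \le \GL_2(\F_3)$, and moreover that $B$ and $B'$ are isomorphic as interior $P$-algebras, so that $B$ and $B'$ are Puig equivalent by a direct local calculation with a twisted group algebra of $E$ over $P$.

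Second, I would determine the $3$-decomposition matrix of $A$. The ordinary characters belonging to $A$ are precisely those of $3$-defect $2$, read from the character table of $\Cone$. To pin down the decomposition numbers I would use Brauer reciprocity against the Cartan matrix prescribed by $B$, restrictions to the maximal subgroups $\Suz$ and $\Mathieu$ whose $3$-modular character data is accessible, and generalised decomposition numbers coming from the centralisers of non-trivial elements of $P$ (which feed into the local block $B$ via the Brauer construction). Any residual ambiguity should be settled by explicit condensation computations. The resulting matrix must coincide, up to permutation of rows and columns, with the well-known decomposition matrix of the principal $3$-block of $\mathfrak{S}_6$.

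Having matched local and numerical invariants, the Puig equivalence itself is obtained by transport along the Brauer correspondent. Namely, I would build a splendid Rickard equivalence between $A$ and $B$ by lifting a stable equivalence of Morita type arising from the Green correspondence to a two-sided tilting complex, in the spirit of the Okuyama strategy employed in \cite{KoshitaniKunugi2002}; the analogous splendid equivalence between $A'$ and $B'$ is supplied by that reference directly. Composing these with the isomorphism $B \cong B'$ of interior $P$-algebras yields a splendid Rickard equivalence between $A$ and $A'$, and because the source-algebra data assembled along the way on both sides agree, the composite in fact descends to an isomorphism of source algebras, i.e.~a Puig equivalence. The main obstacle is the combined work in the middle: verifying that the decomposition matrix of $A$ can be fully pinned down inside $\Cone$, and that the stable equivalence with $B$ lifts to a splendid tilting complex whose effect on simple modules is compatible with the $\mathfrak{S}_6$-picture. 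Everything else is either general machinery or a finite local computation.
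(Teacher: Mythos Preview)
Your outline correctly identifies the local ingredients: locating $N_G(P)$, showing that $B$ and $B'$ are Puig equivalent as interior $P$-algebras, and obtaining a splendid stable equivalence between $A$ and $B$ via the gluing theorem. These are exactly the steps the paper carries out in \S\ref{betweenAandB}. The divergence, and the genuine gap, is in how you pass from this to a Puig equivalence between $A$ and $A'$.

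You propose to lift the stable equivalence between $A$ and $B$ to a splendid Rickard equivalence via an Okuyama-type argument, compose with the known splendid equivalence between $A'$ and $B'$, and then assert that the resulting splendid derived equivalence between $A$ and $A'$ ``descends to an isomorphism of source algebras, i.e.\ a Puig equivalence.'' This last inference is not valid: a splendid derived equivalence is strictly weaker than a Puig equivalence, which by definition is a \emph{Morita} equivalence induced by a $\Delta P$-projective trivial-source bimodule. Matching decomposition matrices and local data does not close this gap; nothing you have assembled forces the tilting complex to be concentrated in a single degree. Moreover, the Okuyama lift itself is not automatic: to run it one must already know enough about the images of simples under the stable equivalence to choose the correct sequence of elementary tilts, and you have not indicated how that information is to be obtained.

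The paper's route avoids both problems simultaneously. It never lifts the stable equivalence $A\sim B$ to a derived equivalence at all. Instead it composes stable equivalences to get a splendid stable equivalence $F\colon A\to A'$ directly, and then applies Linckelmann's criterion \cite[Theorem~2.1(iii)]{Linckelmann1996MathZ}: if $F$ sends every simple $A$-module to a simple $A'$-module, then $F$ is already a Morita (hence Puig) equivalence. Verifying this hypothesis is the real work, carried out in \S\ref{image} by producing explicit trivial-source $A$-modules of known character and vertex, transporting them through $F$, and matching them against the (completely enumerated) trivial-source modules of $A'$; the stripping-off method then isolates the image of each simple. A side effect of this analysis is that the undetermined parameter $\alpha$ in the decomposition matrix is pinned down only \emph{during} this process (Lemma~\ref{S5}), not beforehand, so your plan to settle the decomposition matrix first and then argue abstractly is also out of order.
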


\noindent
Hence, recalling the notion from 
\cite[Remark 1.6(c)]{KoshitaniMuellerNoeske2013a},
this shows that $A$ is pseudo-principal.
Moreover, as an immediate corollary we get:

\begin{Corollary}\label{OkuyamaWaki}
The block algebra $A$ of $G$ 
and the principal $3$-block algebra of the symplectic group
${\mathrm{Sp}}_4(q)$, for any prime power $q$ such that
$q \equiv 2 \ {\mathrm or } \ 5 \ {\mathrm{(mod \ 9)}}$,
are Puig equivalent. 
\end{Corollary}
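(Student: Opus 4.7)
The plan is straightforward: combine Theorem~\ref{MoritaEquivalentToS6}, which already gives a Puig equivalence between $A$ and the principal $3$-block of $\mathfrak{S}_6$, with a known Puig equivalence between the principal $3$-block of $\mathfrak{S}_6$ and that of ${\mathrm{Sp}}_4(q)$ for $q \equiv 2, 5 \pmod 9$, and invoke the transitivity of Puig equivalence.

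Concretely, I would first check that the defect group hypothesis is consistent: for $q$ as above, we have $3 \parallel q+1$ and $3 \nmid q-1$, $3 \nmid q^2+1$, so from $|{\mathrm{Sp}}_4(q)| = q^4(q^2-1)^2(q^2+1)$ the Sylow $3$-subgroup of ${\mathrm{Sp}}_4(q)$ has order $9$; sitting inside a torus of type $(q+1) \times (q+1)$ it is moreover elementary abelian, matching the defect group $P = C_3 \times C_3$ of $A$.

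The substantive ingredient is the theorem of Okuyama and Waki (which explains the label of this corollary), stating that the principal $3$-block of ${\mathrm{Sp}}_4(q)$ is Puig equivalent to the principal $3$-block of $\mathfrak{S}_6$ under the above congruence on $q$. Granting this citation, Puig equivalences compose, so the two together yield the desired Puig equivalence between $A$ and the principal $3$-block of ${\mathrm{Sp}}_4(q)$.

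No real obstacle arises: the paper itself advertises the statement as ``an immediate corollary''. The only point of care is to ensure that the Okuyama--Waki result provides a genuine Puig (i.e.\ source-algebra) equivalence, rather than merely a Morita or derived equivalence, so that the strength of the conclusion is preserved under composition.
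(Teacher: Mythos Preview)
Your proposal is correct and matches the paper's own proof essentially verbatim: the paper simply cites Theorem~\ref{MoritaEquivalentToS6} together with \cite[Theorem 2.3]{OkuyamaWaki1998} (see also \cite[Remark 3.7]{Okuyama1997}) and concludes. Your closing caveat about needing a genuine Puig equivalence from the Okuyama--Waki input is precisely why the paper adds the secondary reference to \cite{Okuyama1997}.
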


\noindent 
Typically, the starting point to proving theorems like those above
is the decomposition matrix of the block in question. Alone,
for the block algebra $A$ these have not been known before.
So we first set out to tackle this problem, and arrived at 
a fairly good approximation to the decomposition matrix of $A$,
but we have not been able to determine it completely. Still,
the approximation was good enough to compare the result
with the data in \cite{KoshitaniKunugi2002} to arrive
at the sensible guess that $A$ might be closely related to
the principal block algebra $A'$ of the symmetric group
$\mathfrak S_6$, whose decomposition matrix is well-known of course. 
Now, using the partial results on decomposition numbers,
it was possible to activate the sophisticated block-theoretic
machinery needed to actually compare the block algebras $A$ and $A'$, 
which in turn finally paved the way to complete the decomposition 
matrix of $A$.
Here it is:

\begin{Theorem}\label{3DecompositionMatrixOfA}
The $3$-decomposition matrix of $A$ and of $A'$ is the following,
where we denote ordinary characters by their degrees, but also give
the ATLAS notation for the characters in $A$:
$$ 
\begin{array}{ccc}
\begin{array}{l|r|l||ccccc}
&A & A' &S_1 &S_2 &S_3 &S_4 &S_5 \\
\hline
\chi_{29}&  2\,816\,856 & 1      & 1& .& .& .& . \\
\chi_{38}& 16\,347\,825 & 5'     & 1& 1& .& .& . \\
\chi_{51}& 44\,013\,375 & 1^-    & .& .& 1& .& . \\
\chi_{55}& 57\,544\,344 & 5^-    & .& 1& 1& .& . \\
\chi_{62}& 91\,547\,820 & 10^-   & .& 1& .& 1& . \\
\chi_{80}&251\,756\,505 & 5      & 1& .& .& .& 1 \\
\chi_{85}&292\,953\,024 & {5'}^- & .& .& 1& .& 1 \\
\chi_{89}&326\,956\,500 & 10     & .& .& .& 1& 1 \\
\chi_{91}&387\,317\,700 & 16     & 1& 1& 1& 1& 1 \\
\end{array} 
&
&
\begin{array}{l}
 \dim S_1 = 2\,816\,856\\
 \dim S_2 = 13\,530\,969\\
 \dim S_3 = 44\,013\,375\\
 \dim S_4 = 78\,016\,851\\
 \dim S_5 = 248\,939\,649
\end{array}
\end{array}
$$
\end{Theorem}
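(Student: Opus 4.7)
The plan is to derive the statement directly from Theorem \ref{MoritaEquivalentToS6}. Since a Puig equivalence is in particular a Morita equivalence, it induces a bijection $S_j \leftrightarrow T_j$ between the isomorphism classes of simple $kA$-modules and simple $kA'$-modules, and a bijection $\chi \leftrightarrow \chi'$ between the ordinary irreducible characters belonging to $A$ and to $A'$; moreover, these bijections preserve decomposition numbers. Hence, once the labellings on both sides are fixed, the decomposition matrix of $A$ is entirely determined by that of $A'$.

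The decomposition matrix of $A'$, the principal $3$-block of $\mathfrak S_6$, is classical: it can be extracted by standard techniques of the modular representation theory of symmetric groups (Specht modules, the Jantzen--Schaper formula, the Mullineux map), or read off from the Modular Atlas. The labels $1$, $5'$, $1^-$, $5^-$, $10^-$, $5$, $5'^-$, $10$, $16$ for the ordinary characters in $A'$ are the standard ones, with the sign~$^-$ indicating a tensor product with the sign representation of $\mathfrak S_6$.

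It remains to pin down the bijection between the ATLAS characters $\chi_i \in \mathrm{Irr}(G \mid A)$ and the labels on the $A'$ side. I would combine the partial decomposition matrix of $A$ already produced earlier in the paper --- which narrows each row $\chi_i$ to a small set of candidates --- with two further constraints: the degree equation $\chi_i(1) = \sum_j d_{ij} \dim S_j$, where the dimensions of the simple $kA$-modules are read off inductively from the matrix itself as $\dim S_1 = \chi_{29}(1)$, $\dim S_2 = \chi_{38}(1) - \dim S_1$, $\dim S_3 = \chi_{51}(1)$, $\dim S_4 = \chi_{62}(1) - \dim S_2$ and $\dim S_5 = \chi_{80}(1) - \dim S_1$; and the preservation of central character values modulo $3$ under a Puig equivalence, which further restricts which row of $A'$ is allowed to match which $\chi_i$. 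These constraints uniquely force the stated bijection, and hence yield the full matrix together with the claimed dimensions of $S_1,\dots,S_5$. There is no substantive obstacle at this level: the real work has already been invested in proving Theorem \ref{MoritaEquivalentToS6}, and the present theorem is essentially a compilation of the well-understood $\mathfrak S_6$ side together with the bookkeeping required to transport the character labels across the Puig equivalence.
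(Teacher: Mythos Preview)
Your overall plan is correct and mirrors the paper's logic: Theorem~\ref{MoritaEquivalentToS6} gives a Morita equivalence, so the decomposition matrix of $A$ is that of $A'$ up to relabelling, and combining this with Lemma~\ref{DecompositionMatrixOfA} forces $\alpha=0$ (since the $\mathfrak S_6$ matrix has no entry exceeding~$1$). This is exactly how the paper views Theorem~\ref{3DecompositionMatrixOfA}: as a corollary of the Puig equivalence together with the approximate matrix from \S\ref{Felix}.

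The one place where the paper proceeds differently is the identification of the bijection $\chi_i\leftrightarrow\chi'$ on the $A'$ side. You propose to recover it from numerical constraints, in particular ``preservation of central character values modulo~$3$''. The paper does not do this: it reads the bijection off directly from the explicit images $F(S_j)$ computed in Lemmas~\ref{S1}--\ref{S5}, which yield $S_1\mapsto 1a$, $S_2\mapsto 4b$, $S_3\mapsto 1b$, $S_4\mapsto 6$, $S_5\mapsto 4a$, and hence the row bijection shown in the table. Your alternative is not obviously sufficient: the decomposition matrix of $A'$ has a symmetry group of order~$4$ (generated by tensoring with the sign and by the outer automorphism of $\mathfrak S_6$), so several column bijections are compatible with the numerical data alone, and it is not clear that the congruence you invoke singles out the one actually induced by the bimodule of Theorem~\ref{MoritaEquivalentToS6}. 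This does not affect the decomposition matrix of $A$ itself, which is the principal content of the theorem, but if you want the specific $A'$-labels displayed you should appeal to the explicit tracking of simples through $F$ rather than to an indirect invariant.
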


\smallskip\noindent
We just remark that it is an ongoing project, see \cite{ModularAtlasProject},
to determine all the decomposition numbers of all the finite simple
and closely related groups occurring in the {\sf Atlas} \cite{Atlas}.
The above result is also a contribution to this project.
\qed

\bigskip\noindent
Following the general recipe indicated above, the present paper
is organised as follows:
In \S\ref{Felix} we set out to find an approximation
to the decomposition matrix of $A$, by using techniques
from computational modular character theory. The result,
where only a parameter $\alpha\in\{0,1,2\}$ is yet undetermined,
is given in {\bf\ref{DecompositionMatrixOfA}},
In \S\ref{A'} we collect the necessary facts for the 
symmetric group $\mathfrak S_6$ and its principal block $A'$.
In particular, we determine the trivial-source modules in $A'$,
which play a particularly important role in the sequel.
In \S\ref{betweenAandB} we prove that the block $A$ and
its Brauer correspondent $B$ in $N_G(P)$ are splendidly
stably equivalent of Morita type. This immediately implies
that $A$ and $A'$ also are splendidly
stably equivalent of Morita type.
In \S\ref{image} we finally show that the stable equivalence
between $A$ and $A'$ respects simple modules,
by a thorough consideration of trivial-source modules in $A$.
This implies that $A$ and $A'$ actually are Puig equivalent,
from which the other assertions follow immediately.

\smallskip\noindent
In order to facilitate the necessary computations,
we make use of the computer algebra system {\sf GAP} \cite{GAP},
to deal with finite groups, in particular
permutation and matrix groups, and with ordinary and Brauer
characters of finite groups. In particular, we make use of the
character table library \cite{CTblLib}, which provides electronic 
access to the data collected in the {\sf Atlas} \cite{Atlas} and in the 
{\sf ModularAtlas} \cite{ModularAtlas,ModularAtlasProject},
of the interface \cite{AtlasRep} to the database \cite{ModAtlasRep},
and of the package \cite{MaasNoeske} providing the necessary tools
from computational modular character theory.

Moreover, we use the  computer algebra system {\sf MeatAxe} \cite{MA},
and its extensions \cite{LuxMueRin, LuxSzokeII, LuxSzoke, LuxWie} 
to deal with matrix representations over finite fields.
Here, we use `small' finite fields, but we always make sure, 
silently, that these are chosen such that the computational results
thus obtained remain valid without change after scalar extension to
the fixed field of positive characteristic which is `large enough'
in the sense of {\bf\ref{NotationEquivalences}} below.

\begin{Notation/Definition}\label{NotationEquivalences}
Throughout this paper, we use the standard 
notation and terminology as is used in the 
{\sf Atlas} \cite{Atlas} and textbooks like
\cite{NagaoTsushima,Thevenaz}. We recall a few for convenience:

\smallskip
(i) If $A$ and $B$ are finite dimensional $k$-algebras, where $k$ is a field,
we denote by $\mathrm{mod}{\text -}A$,
$A{\text -}\mathrm{mod}$ and
$A{\text -}\mathrm{mod}{\text -}B$
the categories of finitely generated right $A$-modules,
left $A$-modules and $(A,B)$-bimodules, respectively.
A module always refers to a finitely generated right module, 
unless stated otherwise. Let $M$ be an $A$-module.
We let $M^\vee =\Hom_A(M_A, A_A)$ be the $A$-dual of $M$, 
so that $M^\vee$ becomes a left $A$-module. 
We denote by ${\mathrm{soc}}(M)$ and ${\mathrm{rad}}(M)$
the socle and the radical of $M$, respectively.
For non-isomorphic simple $A$-modules $S_1, \ldots, S_n$, 
and positive integers $a_1, \ldots, a_n$,
we write that 
"\emph{$M = a_1 \cdot S_1 + \cdots + a_n \cdot S_n$,
as composition factors}"
when the composition factors are
$a_1$ times $S_1$,$\ldots$ , $a_n$ times $S_n$.
For another $A$-module $N$, we write $M|N$ if 
$M$ is isomorphic to a direct summand of $N$ as an $A$-module;
and we set $[M, N]^A = \dim_k({\mathrm{Hom}}_A(M,N))$.

\smallskip
(ii)
By $G$ we always denote a finite group, and we fix a
prime number $p$. Assume that $(\mathcal K, \mathcal O, k)$ is a
splitting $p$-modular system for all subgroups of $G$, that is, 
$\mathcal O$ is a complete discrete valuation ring of
rank one such that its quotient field $\mathcal K$ is
of characteristic zero, and its residue field
$k=\mathcal O/\mathrm{rad}(\mathcal O)$ is of
characteristic $p$, and that $\mathcal K$ and $k$ are
splitting fields for all subgroups of $G$.

We denote by $\mathrm{Irr}(G)$ and $\mathrm{IBr}(G)$ the sets of
all irreducible ordinary and Brauer characters of $G$, respectively.
If $A$ is a block algebra of $\mathcal OG$,
then we write $\mathrm{Irr}(A)$ and $\mathrm{IBr}(A)$ for
the sets of all characters in $\mathrm{Irr}(G)$ and $\mathrm{IBr}(G)$
which belong to $A$, respectively.

We say that a $kG$-module $M$ is a {\sl trivial-source} module
if $M$ is indecomposable and $M$ has a trivial source,
see \cite[II Definition 12.1]{Landrock}; note that the definition
here is slightly different from \cite[\S 27 p.218]{Thevenaz}
where indecomposability is not assumed. Thus, in particular,
projective indecomposable modules are trivial-source modules. 
We recall the following facts,
see \cite[II Theorem 12.4, I Proposition 14.8]{Landrock}:
If $M$ is a trivial-source module, then $M$ lifts uniquely (up to isomorphism) 
to a trivial source $\mathcal O G$-lattice $\widehat M$,
in particular this associates an ordinary character $\chi_M$ to $M$.
Moreover, if $N$ is another trivial-source $kG$-module,
then $[M,N]^G = \gen{\chi_M,\chi_N}_G$, where $\gen{-,-}_G$ 
denotes the usual scalar product on ordinary characters.

\smallskip
(iii)
Let $G'$ be another finite group, and let $V$ be an 
$(\mathcal OG, \mathcal OG')$-bimodule. 
Then we can regard $V$ as a right
$\mathcal O[G \times G']$-module
via $v{\cdot}(g,g') = g^{-1}vg'$ for $v \in V$ and $g, g' \in G$.
Let $A$ and $A'$ be block algebras of $\mathcal OG$ and $\mathcal OG'$, 
respectively, such that $A$ and $A'$ have a defect group $P$
in common. 
Then we say that $A$ and $A'$ are {\sl Puig equivalent}
if there is a Morita equivalence between $A$ and $A'$
which is induced by an 
$(A,A')$-bimodule $\mathfrak M$ 
such that, as a right $\mathcal O[G \times G']$-module, 
$\mathfrak M$ is a trivial-source module and $\Delta P$-projective. 
This is equivalent to a condition that $A$ and $A'$ have 
source algebras which are isomorphic as interior $P$-algebras, 
see \cite[Remark 7.5]{Puig1999} and \cite[Theorem 4.1]{Linckelmann2001}.
We say that $A$ and $A'$ are {\sl stably equivalent of Morita type} 
if there exists an 
$(A, A')$-bimodule $\mathfrak M$ such that both
${_A}\mathfrak M$ and $\mathfrak M_{A'}$ are projective and that 
$_A(\mathfrak M \otimes_{A'} \mathfrak M^\vee)_A 
  \cong {_A}{A}{_A} \oplus
 ({\mathrm{proj}} \ (A,A){\text{-}}{\mathrm{bimod}})$
and
$_{A'} (\mathfrak M^\vee \otimes_A \mathfrak M)_{A'} 
\cong {_{A'}}{A'}{_{A'}} \oplus
 ({\mathrm{proj}} \ (A' ,A'){\text{-}}{\mathrm{bimod}})$.
We say that $A$ and $A'$ are 
{\sl splendidly stably equivalent of Morita type}
if the stable equivalence of Morita type is induced by
an indecomposable $(A,A')$-bimodule $\mathfrak M$ which is a trivial-source
$\mathcal O[G \times G']$-module and is $\Delta P$-projective,
see \cite[Theorem~3.1]{Linckelmann2001}.

We say that $A$ and $A'$ are 
{\sl derived equivalent} if
${\mathrm{D}}^b({\mathrm{mod}}{\text{-}}A)$ and
${\mathrm{D}}^b({\mathrm{mod}}{\text{-}}A')$ are
equivalent as triangulated categories,
where 
${\mathrm{D}}^b({\mathrm{mod}}{\text{-}}A)$
is the bounded derived category of ${\mathrm{mod}}{\text{-}}A$.
In that case, there even is a {\sl Rickard complex}
$M^{\bullet} \in
{\mathrm{C}}^b (A{\text{-}}{\mathrm{mod}}{\text{-}}A')$,
where the latter 
is the category of bounded complexes
of finitely generated $(A,A')$-bimodules,
all of whose terms are projective both as 
left $A$-modules and as right $A'$-modules, such that 
$M^{\bullet}\otimes_{A'}(M^{\bullet})^{\vee}\cong A$
in $K^b(A{\text{-}}{\mathrm{mod}}{\text{-}}A)$
and
$(M^{\bullet})^{\vee}\otimes_A M^{\bullet}\cong A'$
in $K^b(A'{\text{-}}{\mathrm{mod}}{\text{-}}A')$,
where $K^b(A{\text{-}}{\mathrm{mod}}{\text{-}}A)$ is the homotopy
category associated with 
${\mathrm{C}}^b (A{\text{-}}{\mathrm{mod}}{\text{-}}A)$.
We say that $A$ and $A'$ are 
{\sl splendidly derived equivalent} 
if $K^b({\mathrm{mod}}{\text{-}}A)$ and 
$K^b({\mathrm{mod}}{\text{-}}A')$ are equivalent
via a Rickard complex 
$M^{\bullet}\in {\mathrm{C}}^b (A{\text{-}}{\mathrm{mod}}{\text{-}}A')$
as above,
such that additionally each of its terms is a 
direct sum of $\Delta P$-projective 
trivial-source modules as an
$\mathcal O[G \times G']$-module;
see \cite{Linckelmann1998,Linckelmann2001}.
Note that a Morita equivalence entails a derived equivalence,
and that a Puig equivalence entails a splendid derived equivalence.
\end{Notation/Definition}

\section{Decomposition numbers of {\sf Co}$_1$}\label{Felix}

In this section we use methods from computational modular character theory
to find an approximation to the decomposition matrix of the block in
question. To do so, we make heavy use of the data on decomposition 
numbers known for various maximal subgroups of $G$,
as is contained in \cite{ModularAtlas, ModularAtlasProject}.
We stress that, to find the approximate decomposition matrix
exhibited below in the first place, we make use of the full machinery
laid out in \cite{MOCbook}, in combination with substantial computation,
using {\sf GAP} \cite{GAP} and the package \cite{MaasNoeske},
while the proof presented here is subsequently derived from the
computer-based one by straightening it out by hand. 

\smallskip\noindent
For convenience we recall a few of the basic notions used in
this approach, while for full details we
refer the reader to \cite{MOCbook}:

\begin{Definition}\label{DefinitionCo1}
We keep the notation in {\bf\ref{NotationEquivalences}}(ii).
Let $\Z\mathrm{IBr}(A)$ be the lattice of generalised
Brauer characters belonging to $A$, and
$\N_0\mathrm{IBr}(A)\subset\Z\mathrm{IBr}(A)$ 
be the non-negative cone of genuine Brauer characters 
spanned by $\mathrm{IBr}(A)$.
Then a $\Z$-basis $\BS$ of $\Z\mathrm{IBr}(A)$ which is contained
in $\N_0\mathrm{IBr}(A)$ is called a {\it basic set} of Brauer characters;
in particular, $\mathrm{IBr}(A)$ is such a basic set.

Moreover, any projective $A$-module has an ordinary character associated
with it. Thus running through the projective indecomposable $A$-modules,
this gives rise to the set $\mathrm{IPr}(A)\subset\N_0\mathrm{Irr}(A)$,
spanning the lattice $\Z\mathrm{IPr}(A)$ of generalised projective
characters belonging to $A$.
Then, similarly, a $\Z$-basis $\PS$ of $\Z\mathrm{IPr}(A)$ 
which is contained in $\N_0\mathrm{IPr}(A)$ is called a {\it basic set} 
of projective characters;
in particular, $\mathrm{IPr}(A)$ is such a basic set.

Now let $\widehat{\mathrm{Irr}}(A)$ be the set of restrictions of the
characters in $\mathrm{Irr}(A)$ to the $p$-regular conjugacy classes 
of $G$, thus we have $\widehat{\mathrm{Irr}}(A)\subset\N_0\mathrm{IBr}(A)$.
Similarly we obtain $\widehat{\mathrm{IPr}}(A)$, and thus we get
$\Z\widehat{\mathrm{IPr}}(A)\subseteq\Z\mathrm{IBr}(A)$ such that
$\widehat{\mathrm{IPr}}(A)\subset
 \gen{\widehat{\mathrm{Irr}}(A)}_{\N_0}\subseteq
 \N_0\mathrm{IBr}(A)$.
The restriction $\gen{-,-}'_G$ of the character-theoretic 
scalar product to the $p$-regular classes
induces a non-degenerate pairing
between $\Z\mathrm{IBr}(A)$ and $\Z\widehat{\mathrm{IPr}}(A)$, 
such that $\mathrm{IBr}(A)$ and $\widehat{\mathrm{IPr}}(A)$ 
are a pair of mutually dual $\Z$-bases.

More generally, given basic sets $\BS$ and $\PS$ as above,
the associated matrix of mutual scalar products is
unimodular and has non-negative entries. Now the general strategy
is to use character theory to find sequences of basic sets of
Brauer characters and of projective characters, which better and
better approximate the sets $\mathrm{IBr}(A)$ and $\mathrm{IPr}(A)$,
respectively, the guiding principle being the above-mentioned
positivity properties.
\end{Definition}

\begin{Notation}\label{NotationCo1}
>From now on we use the following notation.
Let $G := {\sf Co}_1$ be the first sporadic simple Conway group, and hence
$|G| = 2^{21}{\cdot}3^9{\cdot}5^4{\cdot}7^2{\cdot}11{\cdot}13{\cdot}23$,
see \cite[p.180]{Atlas}.
Assume $p = 3$, and let $(\mathcal K, \mathcal O, k)$ 
be a splitting $3$-modular system for all finite groups we shall treat with.
It follows from the character table of $G$,
see \cite[p.180]{Atlas},
that there is a unique block algebra $A$ of $kG$ 
with elementary abelian defect group $P$ of order $9$; 
see also \cite{Noeske2008}. 
(We will specify $P$ as a subgroup of $G$ more precisely 
in {\bf\ref{FindH}} and {\bf\ref{NotationH}} below.)

Moreover, it turns out that $A$ contains $k(A)=9$ irreducible ordinary
characters and $l(A)=5$ irreducible Brauer characters. 
According to the ordering in \cite[p.180]{Atlas},
the ordinary characters in $A$ are
$\{ \chi_{29}, \chi_{38}, \chi_{51}, \chi_{55}, \chi_{62},
\chi_{80}, \chi_{85}, \chi_{89}, \chi_{91} \} $,
for convenience their degrees are indicated in 
{\bf\ref{3DecompositionMatrixOfA}}.
\end{Notation}

\begin{noth}\label{MOC1}
We begin by choosing basic sets of Brauer characters and projective
characters. 

Let $\widehat{\chi}_i$ denote the $i$-th irreducible character of $G$,
with respect to the ordering given in \cite[p.180]{Atlas},
restricted to its $3$-regular conjugacy classes. 
Moreover, for a subgroup $H\le G$ let $\psi(H)_i$ denote the $i$-th
irreducible Brauer character of $H$ with respect to the ordering
given in the databases mentioned, and let $\Psi(H)_i$ denote the 
ordinary character of the corresponding projective indecomposable module.
We remark that the irreducible Brauer characters and the projective
indecomposable characters of the $2$-local maximal subgroup $\Mathieu$
are easily computed using Fong-Reynolds correspondence, 
but we only need those of the Mathieu group $\mathsf{M}_{24}$, inflated
along the natural map $\Mathieu\rightarrow\mathsf{M}_{24}$.

Then our initial basic set $\BS=\{\varphi_1,\ldots,\varphi_5\}$ 
of Brauer characters constitutes of the $A$-parts of the Brauer characters 
\[ \widehat{\chi}_{29},\: \psi(\Suz)_7\upa{G},\: \widehat{\chi}_{51},\: 
\psi(\Suz)_{11}\upa{G},\: \widehat{\chi}_{80},\]
and as an initial basic set $\PS=\{\Phi_1, \ldots, \Phi_5\}$ 
of projective characters we choose the $A$-parts of
\[ \Psi(\Ctwo)_{18}\upa{G},\: \Psi(\Ctwo)_{22}\upa{G},\:
\Psi(\Ctwo)_{29}\upa{G},\: \Psi(\Mathieu)_{12}\upa{G},\:
\]
These indeed are suitable basic sets, as the matrix 
$(\gen{\varphi_i, \Phi_j }'_G)_{1\le i,j\le 5}$ 
shows:

 \[
 \begin{array}{|r|rrrrr|}
  \hline
  \gen{-,-}'_G & \bm{\Phi_1} & \Phi_2 & \Phi_3 & \Phi_4 & \Phi_5\\
  \hline
  \bm{\varphi_1} &.& .& .& .& 1\\
  \bm{\varphi_2} &.& .& .& 1& 1\\
  \bm{\varphi_3} &.& .& 1& .& .\\
  \bm{\varphi_4} &.& 1& .& 2& 1\\
  \varphi_5      &1& 1& 3& 1& 3\\
  \hline
 \end{array}
 \]

Note that in particular this initial matrix already turns out to 
be unitriangular, which will simplify the arguments to follow.
\end{noth}

\begin{noth}\label{MOC2}
it is now immediate that $\varphi_1$ and $\varphi_3$ 
are irreducible Brauer characters, and that $\Phi_1$ 
is the character of a projective indecomposable module. 
We denote the known irreducible Brauer characters and 
projective indecomposable characters in bold face in 

Next we prove that $\varphi_2$ and $\varphi_4$ actually are
irreducible Brauer characters as well.
To do so, we consider the $A$-parts of the projective characters 
$\Psi(\Suz)_{27}\upa{G}$ and $\Psi(\Mathieu)_{2}\upa{G}$,
whose decompositions into $\PS$ are given as follows:
\[ \begin{array}{|l|rrrrr|} \hline
& \Phi_1 & \Phi_2 & \Phi_3 & \Phi_4 & \Phi_5\\
\hline
  \Psi(\Suz)_{27}\upa{G} & -9& 21& 16& -1& 1\\
  \Psi(\Mathieu)_{2}\upa{G} & -4& -1& 0& 0& 4\\
\hline \end{array} \]

Now assume that $\varphi_2$ is reducible. Then 
$\varphi_2-\varphi_1$ is a Brauer character, whose
scalar product with $\Psi(\Suz)_{27}\upa{G}$
equals $[0,0,0,1,0]\cdot [-9,21,16,-1,1]^{\top}=-1$, 
a contradiction. Hence $\varphi_2$ is an irreducible Brauer character.

Next, assume that $\varphi_4$ is reducible. Then either 
$\varphi_4-\varphi_2$ or $\varphi_4-\varphi_1$ is a Brauer character.
Taking scalar products with $\Psi(\Mathieu)_{2}\upa{G}$ yields
$[0,1,0,1,0]\cdot [-4,-1,0,0,4]^{\top}=-1$ and
$[0,1,0,2,0]\cdot [-4,-1,0,0,4]^{\top}=-1$, respectively,      
a contradiction. Thus $\varphi_4$ is an irreducible Brauer character.
\end{noth}

\begin{noth}\label{MOC3}
We now set out to improve our basic sets $\BS$ and $\PS$.

that $\varphi_4$ is not contained in $\varphi_5$, hence we conclude
that $\Phi_2' := \Phi_2 - \Phi_1$ is a projective character,
and then as such is indecomposable.

Next, using the facts that $\varphi_4$ and $\varphi_2$ are irreducible,
we conclude that $\Phi_4' := \Phi_4 - 2(\Phi_2-\Phi_1)$ and 
$\Phi_5' := \Phi_5 - (\Phi_2-\Phi_1)-(\Phi_4 - 2\Phi_2+\Phi_1)
=\Phi_5-\Phi_4+\Phi_2$
are projective characters. 

Finally, it turns out that $\widehat{\chi}_{85}$ 
decomposes into $\BS$ as follows:
\[ \begin{array}{|l|rrrrr|} \hline
& \varphi_1 & \varphi_2 & \varphi_3 & \varphi_4 & \varphi_5\\
\hline 
\widehat{\chi}_{85}& -1 & . & 1 & . & 1\\
\hline \end{array} \]
As $\varphi_3$ is irreducible it follows 
that $\varphi'_5:=\varphi_5 - \varphi_1$ is a Brauer character.

Thus, we have obtained new basic sets
$\BS':=\{\varphi_1,\ldots,\varphi_4,\varphi'_5\}$ and 
$\PS':=\{\Phi_1, \Phi_2', \Phi_3, \Phi_4', \Phi_5' \}$,
whose matrix of mutual scalar products is given as follows:

 \[
 \begin{array}{|r|rrrrr|}
  \hline
  \gen{-,-}'_G & \bm{\Phi_1} & \bm{\Phi'_2} & 
                       \Phi_3 & \Phi'_4 & \Phi'_5 \\
  \hline
  \bm{\varphi_1} &.& .& .& .& 1\\
  \bm{\varphi_2} &.& .& .& 1& .\\
  \bm{\varphi_3} &.& .& 1& .& .\\
  \bm{\varphi_4} &.& 1& .& .& .\\
  \varphi'_5     &1& .& 3& 1& 2\\
  \hline
 \end{array}
 \]
\end{noth}

\begin{noth}\label{MOC4}
With respect to $\PS'$ we now have:
\[ \begin{array}{|l|rrrrr|} \hline
& \Phi_1 & \Phi_2' & \Phi_3 & \Phi_4' & \Phi_5' \\
\hline
  \Psi(\Mathieu)_{2}\upa{G} & -9 & 3& 0& 4& 4 \\
\hline \end{array} \]
Since $\Phi_1$ is already known to be indecomposable, 
show that it is contained 
at most once  in $\Phi_4'$ and at most twice in $\Phi_5'$,
we from this conclude that
$\Phi_5'':=\Phi_5'- 2\Phi_1$ and $\Phi_4'':=\Phi_4'-\Phi_1$
are projective characters. Then, as such, they both are indecomposable.

Finally, the $A$-part of the tensor product $\Phi''_4\otimes\widehat{\chi}_2$
decomposes into the newly found projective characters as follows:
\[ \begin{array}{|l|rrrrr|} \hline
& \Phi_1 & \Phi_2' & \Phi_3 & \Phi_4'' & \Phi_5'' \\
\hline
\Phi''_4\otimes\widehat{\chi}_2 & -4 & 2 & 4 & 2 & 0 \\
\hline \end{array} \]
Hence $\Phi'_3:=\Phi_3-\Phi_1$ is a projective character.

Thus we have obtained the new basic set
$\PS'':=\{\Phi_1 , \Phi_2' , \Phi_3' , \Phi_4'' , \Phi_5'' \}$,
whose matrix of scalar products 
with $\BS'$ is given as follows:

 \[
 \begin{array}{|r|rrrrr|}
  \hline
  \gen{-,-}'_G & \bm{\Phi_1} & \bm{\Phi'_2} & 
                       \Phi'_3 & \bm{\Phi''_4} & \bm{\Phi''_5} \\
  \hline
  \bm{\varphi_1} &.& .& .& .& 1\\
  \bm{\varphi_2} &.& .& .& 1& .\\
  \bm{\varphi_3} &.& .& 1& .& .\\
  \bm{\varphi_4} &.& 1& .& .& .\\
  \varphi'_5     &1& .& 2& .& .\\
  \hline
 \end{array}
 \]

Hence, only three possible decomposition matrices remain:
one of $\Phi_3' - \alpha\cdot \Phi_1$ for $0\le \alpha \le 2$ is a
projective indecomposable character. Thus we have proved:
\end{noth}

\begin{Lemma}\label{DecompositionMatrixOfA}
The $3$-decomposition matrix of $A$ is the following:
$$ \begin{array}{l|ccccc}
& S_1 & S_2 & S_3 & S_4 & S_5 \\
\hline
\chi_{29}& 1& .& .& .& . \\
\chi_{38}& 1& 1& .& .& . \\
\chi_{51}& .& .& 1& .& . \\
\chi_{55}& .& 1& 1& .& . \\
\chi_{62}& .& 1& .& 1& . \\
\chi_{80}& 1& .& \alpha& .& 1 \\
\chi_{85}& .& .& 1+\alpha& .& 1 \\
\chi_{89}& .& .& \alpha& 1& 1 \\
\chi_{91}& 1& 1& 1+\alpha& 1& 1 \\
\end{array} $$
where $\alpha$ is a certain integer such that $\alpha \in \{0,1,2\}$.
\end{Lemma}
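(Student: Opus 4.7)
My plan is to assemble the four iterative refinements of basic sets carried out in \S\ref{MOC1}--\S\ref{MOC4}. By those computations, we have four confirmed irreducible Brauer characters $\varphi_1,\varphi_2,\varphi_3,\varphi_4\in\IBr(A)$ and four confirmed projective indecomposable characters $\Phi_1,\Phi'_2,\Phi''_4,\Phi''_5\in\mathrm{IPr}(A)$; together with the residual projective character $\Phi'_3$ and the Brauer character $\varphi'_5:=\varphi_5-\varphi_1$, these yield basic sets $\BS'$ and $\PS''$ whose mutual scalar-product matrix is displayed at the end of \S\ref{MOC4}.

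The first step is to exploit the duality $\gen{\varphi_i,P_j}'_G=\delta_{ij}$ between irreducible Brauer characters and projective indecomposable characters. Since each of the four known columns $\Phi_1,\Phi'_2,\Phi''_4,\Phi''_5$ contains exactly one nonzero entry, itself equal to $1$, we obtain the pairings $\Phi''_5=P(\varphi_1)$, $\Phi''_4=P(\varphi_2)$, $\Phi'_2=P(\varphi_4)$, while $\Phi_1$ must be the projective cover of a fifth irreducible Brauer character $\varphi^*_5$, which is contained exactly once in $\varphi'_5$. Since $l(A)=5$, this $\varphi^*_5$ is the remaining irreducible Brauer character.

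The second step is to locate the remaining PIM $P(\varphi_3)$. Orthogonality forces $P(\varphi_3)$ to have scalar products $(0,0,1,0,c)$ with $\BS'$ for some $c\in\N_0$, and $\Phi'_3-P(\varphi_3)$ must be a non-negative integer combination of the four PIMs already identified; inspection of the scalar products shows that only $\Phi_1$ can contribute, so $P(\varphi_3)=\Phi'_3-\alpha\Phi_1$ for some $\alpha\in\N_0$, with $\alpha\le 2$ enforced by $\gen{\varphi'_5,\Phi'_3}'_G=2$. The decomposition matrix is then read off by expanding each of the five PIMs in the basis $\Irr(A)$, using the original definitions of $\Phi_1,\ldots,\Phi_5$ as projective characters induced from the maximal subgroups in \S\ref{MOC1} and propagating through the refinements of \S\ref{MOC2}--\S\ref{MOC4}. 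The parameter $\alpha$ enters exactly in the column corresponding to the simple module $S_3$, producing the multiplicities $\alpha$, $1+\alpha$, $\alpha$, $1+\alpha$ in the rows $\chi_{80},\chi_{85},\chi_{89},\chi_{91}$ as claimed.

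The unavoidable obstacle is the determination of $\alpha$ itself. The MOC machinery can narrow it down no further, because each of the three candidates yields self-consistent non-negative projective and Brauer characters with a valid Cartan matrix, and no further induction from a subgroup available in the database discriminates between them. Resolving $\alpha$ requires genuinely new input beyond computational modular character theory, to be supplied only in the later sections via the block-theoretic comparison with the principal $3$-block of $\mathfrak S_6$.
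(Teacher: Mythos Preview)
Your approach is correct and is exactly what the paper does: its proof is a one-liner saying the result follows from \S\ref{MOC1}--\S\ref{MOC4} after reversing the order of the projective characters in the final basic set, and you have simply spelled out that deduction.

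One small slip, though it does not affect the validity of the Lemma: with your convention $P(\varphi_3)=\Phi'_3-\alpha\Phi_1$, the $S_3$-entries in rows $\chi_{80},\chi_{85},\chi_{89},\chi_{91}$ actually come out as $2-\alpha$, $3-\alpha$, $2-\alpha$, $3-\alpha$, not $\alpha$, $1+\alpha$, $\alpha$, $1+\alpha$. For instance $\widehat\chi_{80}=\varphi_5=\varphi_1+\varphi'_5$, so
\[
d_{\chi_{80},S_3}=\gen{\varphi_1+\varphi'_5,\ \Phi'_3-\alpha\Phi_1}'_G
=(0+2)-\alpha(0+1)=2-\alpha .
\]
Since $2-\alpha$ also ranges over $\{0,1,2\}$ this is only a relabeling of the free parameter, and the statement of the Lemma is unchanged; the paper itself introduces $\alpha$ afresh in the Lemma without explicitly identifying it with the $\alpha$ appearing at the end of \S\ref{MOC4}.
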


\begin{proof}
This follows from {\bf\ref{MOC1}}--{\bf\ref{MOC4}},
where here we reverse the order of the projective 
characters in the final basic set, and already indicate
the notation for the associated simple $A$-modules which will
be used later.
\end{proof}

\section{The group $\mathfrak S_6$}\label{A'}

\begin{Notation}\label{NotationGprime} 
Set $G' := \mathfrak S_6$, the symmetric group of degree $6$.
Then the Sylow $3$-subgroup of $G'$ are elementary abelian of order $9$.
Hence fixing a block defect group $P$ of $A$, 
see {\bf\ref{NotationCo1}}, we may identify $P$ with a Sylow $3$-subgroup
of $G'$. Doing so, we have $P = Q \times R$ with $Q \cong C_3 \cong R$, 
where, with respect to the tautological permutation representation of $G'$, 
we may assume that the non-trivial elements of $Q$ are $3$-cycles,
while those of $R$ are fixed-point free. 

Let $A'$ be the principal block algebra of $kG'$. Then we have
$\IBr(A')=\{ 1a, 1b, 4a, 4b, 6 \}$ and 
$\Irr(A')=\{ \chi_1, \chi_{1^-}, \chi_5, \chi_{5^-}, 
\chi_{5'}, \chi_{{5'}^-}, \chi_{10}, \chi_{10^-}, \chi_{16} \}$,
where we use the following notation:

As usual, we denote by $\chi_1:=1_{A'}\in\Irr(A')$ and 
$\chi_{1^-}:=1^-\in\Irr(A')$ the trivial and the sign character,
respectively. Moreover, we let $\chi_5\in\Irr(A')$ be the
non-trivial constituent of the tautological permutation 
character of $A'$, which is characterised by having positive
values on both the $3$-cycles and the transpositions,
and we let $\chi_{5'}\in\Irr(A')$ be the irreducible
character having positive values on both the fixed-point free 
elements of order $3$ and $2$;
note that $\chi_5$ and $\chi_{5'}$ are interchanged by the 
non-trivial outer automorphism of $A'$. 
Finally, we let $\chi_{10}\in\Irr(A')$ be the irreducible
character having positive value on the transpositions.  
Then we get $\chi_{5^-}:=\chi_5\otimes 1^-\in\Irr(A')$ and
$\chi_{{5'}^-}:=\chi_{5'}\otimes 1^-\in\Irr(A')$
as well as $\chi_{10^-}:=\chi_{10}\otimes 1^-\in\Irr(A')$.

As for the simple modular representations occurring, we let, 
again as usual, $1a := k_{G'}\in\IBr(A')$ and $1b := 1^-\in\IBr(A')$
be the trivial and the sign representation, respectively.
Moreover, we let $4a$ be the non-trivial modular constituent 
of the tautological permutation representation of $G'$, then we get
$4b:=4a\otimes 1^-\in\IBr(A')$.
Note that the outer automorphism of $G'$ mentioned earlier
interchanges $4a\leftrightarrow 4b$, and leaves $1a$, $1b$, and $6$ fixed.
\end{Notation}

\begin{Lemma}\label{DecAprime}
The $3$-decomposition matrix of $A'$ is given as follows:
$$ \begin{array}{l|ccccc}
 & 1a & 1b & 4a & 4b & 6 \\ 
\hline
\chi_{1}      &  1&  .&  .&  .&  . \\
\chi_{1^-}    &  .&  1&  .&  .&  . \\
\chi_{5}      &  1&  .&  1&  .&  . \\
\chi_{5^-}    &  .&  1&  .&  1&  . \\
\chi_{5'}     &  1&  .&  .&  1&  . \\
\chi_{{5'}^-} &  .&  1&  1&  .&  . \\
\chi_{10}     &  .&  .&  1&  .&  1 \\
\chi_{10^-}   &  .&  .&  .&  1&  1 \\
\chi_{16}     &  1&  1&  1&  1&  1 \\
\end{array} $$
\end{Lemma}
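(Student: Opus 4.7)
The plan is to deduce the decomposition matrix from the classical modular representation theory of symmetric groups; the result is also available in the Modular Atlas \cite{ModularAtlas} and can be verified independently with \textsf{GAP}.

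First, I would confirm that the nine characters listed constitute all of $\Irr(A')$. By Nakayama's conjecture the 3-blocks of $\mathfrak S_6$ are indexed by 3-cores of partitions of $6$; inspection of hook lengths (or a short 3-abacus computation) shows that among the $11$ partitions of $6$ only $(4,2)$ and $(2^2,1^2)$ are themselves 3-cores, labelling two defect-zero blocks of degree $9$ each. The remaining nine partitions all have empty 3-core and hence lie in the principal block, giving $k(A')=9$. Exactly five of them are 3-regular, namely $(6),(5,1),(4,1^2),(3,3),(3,2,1)$, which by James's theorem parametrise the simple $A'$-modules $D^\lambda$, so $l(A')=5$. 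The Jantzen--Schaper formula (or direct computation) yields $\dim D^\lambda\in\{1,4,4,1,6\}$ respectively, allowing the identifications $D^{(6)}=1a$, $D^{(3,3)}=1b$, $D^{(5,1)}=4a$, $D^{(4,1^2)}=4b$, $D^{(3,2,1)}=6$; the sign-twist $S^\lambda\mapsto(S^{\lambda'})^*$ together with the Mullineux involution then interchanges $1a\leftrightarrow 1b$ and $4a\leftrightarrow 4b$ while fixing $6$, compatibly with the outer-automorphism action on $\Irr(A')$ recorded in {\bf\ref{NotationGprime}}.

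With rows and columns of the matrix so labelled, the entries are determined as follows. For 3-regular $\lambda$, the Specht row of $S^\lambda$ is unitriangular in the dominance order, with $d_{\lambda\lambda}=1$ and $d_{\lambda\mu}\ne 0$ only for $\mu\triangleright\lambda$; the non-trivial entries are forced by the dimension equation $\dim S^\lambda=\sum_\mu d_{\lambda\mu}\dim D^\mu$, which is very restrictive given the small dimensions involved. For the 3-singular rows indexed by $\lambda\in\{(3,1^3),(2^3),(2,1^4),(1^6)\}$, the Specht rows are obtained from those of the conjugates $\lambda'$ by combining the isomorphism $S^\lambda\otimes\operatorname{sgn}\cong(S^{\lambda'})^*$ with the Mullineux involution on columns; this produces the left-right symmetry visible in the table.

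The only real obstacle is the bookkeeping required to pin down the Mullineux involution on our five 3-regular partitions and to verify the dimension equations case by case. For $\mathfrak S_6$ this is entirely routine, and as a check one may verify that the resulting Cartan matrix $D^{\top}D$ has determinant a power of $3$ equal to $|P|=9$; in any event one may simply invoke the Modular Atlas \cite{ModularAtlas} for the final statement.
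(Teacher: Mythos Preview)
Your argument is correct in substance and far more detailed than what the paper does: the paper's entire proof is the one-line citation ``This is taken from \cite[p.~4]{ModularAtlas}.'' You instead sketch an honest derivation via the combinatorics of symmetric-group blocks (Nakayama, James, Mullineux, sign-twist), which is a perfectly valid and more informative route; the trade-off is that the paper treats this lemma as background data and reserves effort for the new content on $\Cone$, whereas your version would make the note more self-contained.

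One small correction: your proposed sanity check at the end is misstated. The Cartan matrix $D^{\top}D$ of $A'$ has determinant $81=3^4$, not $|P|=9$; in general the Cartan determinant of a block is only a power of $p$, not $|P|$ itself. This does not affect the argument, since you only use it as a consistency check and also fall back on \cite{ModularAtlas}, but you should either drop the claimed value or replace it by the correct one.
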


\begin{proof} 
This is taken from \cite[p. 4]{ModularAtlas}.
\end{proof}

\begin{Lemma}\label{TrivialSourceModulesInAprime}
Using the notation introduced in {\bf\ref{NotationGprime}},
the Loewy and socle series of the trivial-source $kG'$-modules 
in $A'$ are given as follows, where for the non-projective ones we also
include their associated ordinary characters: 
\begin{enumerate} \renewcommand{\labelenumi}{\rm{(\roman{enumi})}}
    \item
The trivial-source $kG'$-modules in $A'$ with vertex $P$ are the following:
\[
  \begin{matrix}
1a &  1b  & \boxed{\begin{matrix} 4a\\1a \ \ 1b\\4a \end{matrix}}
          & \boxed{\begin{matrix} 4b\\1a \ \ 1b\\4b \end{matrix}} 
          & \boxed{\begin{matrix}  6\\4a \ \ 4b\\6 \end{matrix}}  \\
  \updownarrow &   \updownarrow &   \updownarrow &   \updownarrow 
& \updownarrow \\
  \chi_1  & \chi_{1^-} & \chi_5+\chi_{{5'}^-} & \chi_{5^-}+\chi_{5'} &
            \chi_{10}+\chi_{10^-}.
  \end{matrix}          
\]
   \item
The trivial-source $kG'$-modules in $A'$ with vertex $Q$ are the following:
\[
 \begin{matrix}
                \boxed{\begin{matrix} 1a\\4a\\1a \end{matrix}}
              & \boxed{\begin{matrix} 1b\\4b\\1b \end{matrix}} 
              & \boxed{\begin{matrix} 4a\\1a \ \ 6\\4a \end{matrix}} 
              & \boxed{\begin{matrix} 4b\\1b \ \ 6\\4b \end{matrix}}    
\\
  \updownarrow &   \updownarrow &   \updownarrow &   \updownarrow
\\
  \chi_1+\chi_5    & \chi_{1^-}+\chi_{5^-}  & 
  \chi_5+\chi_{10} & \chi_{5^-}+\chi_{10^-} 
 \end{matrix}          
\]
   \item
The trivial-source $kG'$-modules in $A'$ with vertex $R$ are the following:
\[
 \begin{matrix}
                \boxed{\begin{matrix} 1a\\4b\\1a \end{matrix}}
              & \boxed{\begin{matrix} 1b\\4a\\1b \end{matrix}} 
              & \boxed{\begin{matrix} 4b\\1a \ \ 6\\4b \end{matrix}} 
              & \boxed{\begin{matrix} 4a\\1b \ \ 6\\4a \end{matrix}}    
\\
  \updownarrow &   \updownarrow &   \updownarrow &   \updownarrow
\\
  \chi_1+\chi_{5'} & \chi_{1^-}+\chi_{{5'}^-}  & 
  \chi_{5'}+\chi_{10^-} & \chi_{{5'}^-}+\chi_{10} 
 \end{matrix}          
\]
   \item
The projective indecomposable $kG'$-modules in $A'$ 
have the following structure:
\[
\begin{matrix}
   \boxed{
         \begin{matrix} 1a \\ 4a \ 4b \\ 1a \ 1a \ 1b \ 6 
                     \\ 4a \ 4b \\ 1a 
         \end{matrix} 
          }
&
   \boxed{
          \begin{matrix} 1b \\ 4a \ 4b \\ 1a \ 1b \ 1b \ 6 
                     \\ 4a \ 4b \\ 1b 
          \end{matrix}  
          } 
&
   \boxed{ 
           \begin{matrix} 4a \\ 1a \ 1b \ 6    \\ 4a \ 4a \ 4b 
                     \\ 1a \ 1b \ 6    \\ 4a 
            \end{matrix} 
          } 
&
   \boxed{ 
            \begin{matrix} 4b \\ 1a \ 1b \ 6    \\ 4a \ 4b \ 4b 
                     \\ 1a \ 1b \ 6 \\ 4b
             \end{matrix} 
          } 
&
   \boxed{ 
            \begin{matrix} 6 \\ 4a \ 4b  \\ 1a \ 1b \ 6 
                    \\ 4a \ 4b  \\ 6
            \end{matrix} 
          } 
\end{matrix}          
\]
\end{enumerate}
\end{Lemma}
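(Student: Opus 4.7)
My plan is to combine three inputs: the Cartan matrix of $A'$, which is immediate from the decomposition matrix in Lemma \ref{DecAprime}; Green correspondence, which enumerates the indecomposable trivial-source modules of each non-trivial vertex; and $\mathrm{Ext}^1$-data between pairs of simple modules in $A'$, which can be determined by direct computation since $|G'|=720$ is small.

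For part (iv), the composition factor multiplicities of each projective indecomposable $A'$-module are read off from the Cartan matrix $C = D^{\top}D$. The displayed Loewy structures then follow from tabulating $\mathrm{Ext}^1_{kG'}(S,T)$ for all pairs of simple modules $S, T$ in $A'$; this is a routine \textsf{MeatAxe} calculation for a group as small as $\mathfrak{S}_6$, or alternatively can be extracted from the classical description of the principal $3$-block of $\mathfrak{S}_6$ in the literature, which is a tame block of finite representation type. The displayed self-duality of each PIM is automatic since $kG'$ is a symmetric algebra.

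For part (i), I invoke Green correspondence: since $P$ is an abelian Sylow $3$-subgroup, there is a bijection between indecomposable trivial-source $kG'$-modules in $A'$ of vertex $P$ and indecomposable trivial-source $kN_{G'}(P)$-modules in the Brauer correspondent $B'$ of vertex $P$. Since $P$ is normal in its normalizer and $N_{G'}(P)/P$ is a $3'$-group (of order $8$), the latter are precisely the inflations of the simple $k[N_{G'}(P)/P]$-modules, of which there are exactly five. Their ordinary characters are computed from those of the Green correspondents via the standard induction formula together with the Brauer construction; combined with Lemma \ref{DecAprime} this yields the composition factor multiplicities. The Loewy filtration is then determined by the composition factors together with the identification of head and socle from the Green correspondent and the $\mathrm{Ext}^1$-data from step (iv). For parts (ii) and (iii), the analogous procedure applies with vertices $Q$ and $R$ in place of $P$; moreover, (iii) follows from (ii) by applying the outer automorphism of $\mathfrak{S}_6$, which interchanges the $G'$-classes of $Q$ and $R$, swaps $4a \leftrightarrow 4b$ and fixes $1a$, $1b$ and $6$, hence induces an autoequivalence of $A'$ carrying the vertex-$Q$ modules of (ii) to the vertex-$R$ modules of (iii).

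The main obstacle is the precise arrangement of the middle Loewy layers, both for the PIMs and for the non-semisimple trivial-source modules. Composition factors, head and socle are straightforward from the inputs above, but the intermediate layers require detailed $\mathrm{Ext}^1$-information between simple modules of $A'$; fortunately, for $\mathfrak{S}_6$ this is a finite and routine computation, and once carried out the statement of the lemma follows immediately.
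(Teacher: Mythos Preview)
Your approach is sound and reaches the same conclusion, but it differs in emphasis from the paper's proof. The paper proceeds by direct computation: it realises the trivial-source modules with vertex $P$, $Q$, $R$ as the indecomposable summands of maximal vertex of the permutation modules $k[G'/P]$, $k[G'/Q]$, $k[G'/R]$ (citing Green correspondence and \cite[Chap.~4, Problem~10]{NagaoTsushima}), decomposes these explicitly with the {\sf MeatAxe}, and reads off the Loewy structure from the output; the ordinary characters are then recovered from the decomposition matrix. For (iv) it simply cites \cite{Waki1989} and offers a {\sf MeatAxe} recheck on the regular representation. Your route---counting via Green correspondence, then pinning down Loewy layers from Cartan data and $\mathrm{Ext}^1$---is more structural and has the advantage that your outer-automorphism argument deriving (iii) from (ii) is a genuine shortcut the paper does not exploit. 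Both routes ultimately rely on a machine computation for a group of order $720$, so neither is intrinsically deeper.

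One correction: your parenthetical that the principal $3$-block of $\mathfrak S_6$ is ``a tame block of finite representation type'' is wrong on two counts. A block cannot be simultaneously tame and of finite type, and in characteristic $3$ a block with defect group $C_3\times C_3$ is wild (the group algebra $k[C_3\times C_3]$ is wild for $p>2$). This does not affect your argument, since you only use this remark to gesture at the literature and your actual method computes $\mathrm{Ext}^1$ directly; but the sentence should be removed or amended.
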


\begin{proof} 
(i)--(iii) 
This is found by explicit computation as follows:
By Green correspondence and \cite[Chap.4 Problem 10]{NagaoTsushima},
the modules we are interested in are precisely the 
indecomposable direct summands with maximal vertex of the
permutation $kG'$-modules on the cosets of $P$, $Q$, and $R$, respectively.
Now, the relevant permutation actions of $G'$ can be computed 
using {\sf GAP} \cite{GAP}, and going over to permutation $kG'$-modules,
their indecomposable direct summands, together with their structure,
are subsequently found using the {\sf MeatAxe} \cite{MA}.
Recall that the number of trivial-source $kG'$-modules 
with vertex as prescribed above can be determined a priority,
using the facts that $N_{G'}(P)/P\cong D_8$ and
$N_{G'}(Q)/Q\cong 2\times\mathfrak S_3\cong N_{G'}(R)/R$. 
Finally, knowing the structure of the trivial-source modules
in question, the ordinary characters associated with them
are determined using {\bf\ref{DecAprime}}.

(iv)
This is contained in \cite{Waki1989}, and can also be rechecked
computationally by applying the {\sf MeatAxe} \cite{MA} to the
regular representation of $kG'$.
\end{proof}

\begin{Notation}\label{NotationHprime}
Set $H' := N_{G'}(P)$. Then we have $H'\cong P\colon D_8$,
where it follows from the description in {\bf\ref{NotationGprime}}
that $P$ is simple, viewed as an ${\mathbb F}_3D_8$-module,
which hence determines $H'$ up to isomorphism. 
But note that, by its mere definition, $H'$ comes with a fixed
embedding into $G'$.

Let $B'$ be the principal block algebra of $kH'$,
being the Brauer correspondent of $A'$ in $H'$;
note that we actually have $B'=kH'$. 
\end{Notation}

\begin{Lemma}\label{StableEquivalenceB'andA'}
Let $\mathcal M'$ be the Scott $k[G' \times H']$-module 
with vertex $\Delta P$. (Note that this is the up to isomorphism unique 
indecomposable direct summand of the $(A',B')$-bimodule 
$A'{\downarrow}^{G' \times G'}_{G' \times H'}{\cdot}1_{B'}$ 
with vertex $\Delta P$.) 
Then the pair $(\mathcal M', \mathcal {M'}^{\vee})$ induces a
splendid stable equivalence of Morita type
between the block algebras $A'$ and $B'$.
\end{Lemma}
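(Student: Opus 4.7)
The plan is to invoke the classical criterion of Broué \cite{Broue1990}, which states that when a block has abelian defect group $P$, the Scott module $\mathcal M'=\mathrm{Sc}(G'\times H',\Delta P)$ induces a splendid stable equivalence of Morita type between $A'$ and its Brauer correspondent $B'$, provided that for every non-trivial subgroup $U\leq P$ the Brauer construction $\mathcal M'(\Delta U)$ realises a Morita equivalence between the relevant local block of $C_{G'}(U)$ and that of $C_{H'}(U)$. For principal $3$-blocks with elementary abelian defect group $C_3\times C_3$ this verification has already been carried out by Koshitani and Kunugi \cite{KoshitaniKunugi2002}, so the most efficient route is to specialise their result to $G'=\mathfrak S_6$.

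First, I would justify the parenthetical uniqueness statement. By the Burry--Carlson--Puig theorem, $\mathcal M'$ is the unique indecomposable direct summand of the permutation module $k{\uparrow}_{\Delta P}^{G'\times H'}$ whose restriction to $\Delta P$ contains the trivial module; a standard Mackey argument, combined with the fact that $1_{B'}$ is the unique block idempotent of $kH'$ with defect group $P$ covered by $A'$, then shows that $\mathcal M'$ appears exactly once as an indecomposable summand with vertex $\Delta P$ of the bimodule $A'{\downarrow}^{G'\times G'}_{G'\times H'}\cdot 1_{B'}$. Being a direct summand of a permutation module, $\mathcal M'$ is a trivial-source module; consequently any stable equivalence it induces is automatically splendid in the sense of {\bf\ref{NotationEquivalences}}(iii).

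Next, I would identify the non-trivial subgroups of $P$ up to $H'$-conjugacy. Using that $H'/P\cong D_8$ acts faithfully on $P\cong\F_3^2$ as a subgroup of $\GL_2(\F_3)$, the non-trivial proper subgroups of $P$ fall into exactly two $H'$-orbits, represented by $Q$ and $R$ in the notation of {\bf\ref{NotationGprime}}; together with $P$ itself these give the three representatives to be examined. For each such $U$ one computes $C_{G'}(U)$ and $C_{H'}(U)$ explicitly (which for $\mathfrak S_6$ are products of $U$ with small symmetric groups) and verifies that $\mathcal M'(\Delta U)$ is the Scott module of $C_{G'}(U)\times C_{H'}(U)$ with vertex $\Delta C_P(U)=\Delta P$, so that it induces a Morita equivalence between the associated local principal blocks.

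The main obstacle is this local verification, but it is precisely what is accomplished in full generality in \cite{KoshitaniKunugi2002}: the main theorem of that paper asserts that for every finite group whose principal $3$-block has elementary abelian defect group of order $9$, the Scott module with vertex $\Delta P$ induces a splendid stable equivalence of Morita type between the principal block and its Brauer correspondent in $N_G(P)$. Specialising to $G'=\mathfrak S_6$ yields the lemma at once, making any further case-by-case computation unnecessary.
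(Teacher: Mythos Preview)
Your argument is sound in outline but takes a different, more circuitous route than the paper, and it contains two mis-citations worth flagging.

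The paper's proof is a one-line citation: the splendid stable equivalence for $\mathfrak S_6$ is read off directly from Okuyama's explicit tilting construction \cite[Example~4.4]{Okuyama1997}, together with \cite[Corollary~2]{Okuyama2000} for splendidness. In Okuyama's method the stable equivalence afforded by the Scott module $\mathcal M'$ is the \emph{starting point} of the tilting procedure, so the lemma is already implicit there. You instead invoke the general machinery of \cite{KoshitaniKunugi2002} and specialise to $G'=\mathfrak S_6$. This is legitimate, but somewhat backwards historically and logically: the Koshitani--Kunugi reduction ultimately relies, for the individual quasi-simple cases it treats, on exactly the kind of explicit verification Okuyama carried out, so you are citing a paper that in turn leans on Okuyama rather than citing Okuyama directly.

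Two small corrections. First, the gluing criterion you describe is not in Brou\'e's 1990 paper; in the form you need it is due to Linckelmann \cite[Theorem~3.1]{Linckelmann2001} (this is how the present paper uses it in {\bf\ref{StableEquivalenceAandB}}). Second, the \emph{main theorem} of \cite{KoshitaniKunugi2002} asserts a splendid \emph{derived} equivalence, not merely a stable one; the stable equivalence via the Scott module is established there as an intermediate step, so you should cite the relevant internal lemma rather than the headline theorem. Neither point invalidates your argument, but the paper's direct appeal to Okuyama is both shorter and more precise.
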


\begin{proof}
This follows from \cite[Example 4.4]{Okuyama1997}
and \cite[Corollary 2]{Okuyama2000}.
\end{proof}

\section{Stable equivalence between $A$ and $B$}\label{betweenAandB}

\begin{Lemma}\label{FindH}
Recall the notation $G$, $P$, and $A$ as in {\bf\ref{NotationCo1}}.
Then the following holds:
\begin{enumerate} \renewcommand{\labelenumi}{\rm{(\roman{enumi})}}
    \item
We have $H:=N_G(P) = P.U_4(3).D_8$ and $C_G(P) = P.U_4(3)$ .
    \item
The non-trivial elements in $P$ fall into two conjugacy classes of $H$.
Thus $P$ has exactly two $H$-conjugacy classes of subgroups of order $3$.
Moreover, the non-trivial elements of $P$ belong to conjugacy classes 
$3A$ and $3B$ in \cite[p.184]{Atlas}.
\end{enumerate}
\end{Lemma}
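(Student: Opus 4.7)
The plan is to identify $H := N_G(P)$ from the list of maximal subgroups of $G$ in \cite[p.\,184]{Atlas}, and then to derive the remaining assertions from the natural action of $H/P$ on $P$. The Atlas lists a maximal subgroup $M$ of $G$ of structure $3^2.U_4(3).D_8$; writing $N := O_3(M) \cong C_3 \times C_3$, this is an elementary abelian normal subgroup of order $9$ of $M$, hence a candidate defect group for the block $A$. Because $A$ is the unique block of $G$ with elementary abelian defect group of order $9$ (see {\bf\ref{NotationCo1}}), any such defect group is $G$-conjugate to $N$, so we may identify $P$ with $N$. Then $M \leq N_G(P)$, and the maximality of $M$ together with $N_G(P) \neq G$ forces $H = M = P.U_4(3).D_8$.

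To pin down $C_G(P)$, I would use the faithful conjugation action of $H/C_G(P)$ on $P$, giving an embedding into $\Aut(P) = \GL_2(3)$, a group of order $48$. Since $U_4(3)$ is non-abelian simple and $|U_4(3)|$ does not divide $48$, its image in $\GL_2(3)$ is trivial, forcing $P.U_4(3) \leq C_G(P)$. On the other hand, as $H/(P.U_4(3)) \cong D_8$ and $\GL_2(3)$ contains subgroups isomorphic to $D_8$ (the Sylow $2$-subgroup of $\GL_2(3)$ is semidihedral of order $16$, containing $D_8$ with index $2$), the induced homomorphism $D_8 \to \GL_2(3)$ must be faithful, whence $C_G(P) = P.U_4(3)$. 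This establishes~(i).

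For~(ii), I would analyse the orbits of $H/C_G(P) \cong D_8 \leq \GL_2(3)$ on $P \setminus \{1\}$. A direct inspection (up to conjugacy in $\GL_2(3)$ there is essentially one faithful copy of $D_8$) shows that $D_8$ partitions the eight non-trivial elements of $P$ into two orbits of length $4$ each; equivalently, the four subgroups of order $3$ in $P$ fall into two $H$-orbits. To match these $H$-classes with the $G$-classes $3A$ and $3B$ of \cite[p.\,184]{Atlas}, I would compare centralizer orders: for a representative $z$ from each $H$-class one computes $|C_H(z)|$ inside $P.U_4(3).D_8$, uses $C_G(P) \leq C_G(z)$, and matches against the Atlas values of $|C_G(3A)|$, $|C_G(3B)|$, $|C_G(3C)|$, $|C_G(3D)|$.

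The main obstacle I expect is this final matching step, since in principle the two $H$-classes could fuse into any of the four classes $3A, 3B, 3C, 3D$ of $G$; however, the very large order $|C_G(P)| = 9 \cdot |U_4(3)|$ already provides a strong lower bound on $|C_G(z)|$ for $z \in P \setminus \{1\}$, which is easily seen from Atlas data to rule out $3C$ and $3D$, leaving precisely the classes $3A$ and $3B$. Alternatively, the assignment can be confirmed by computing power maps or class multiplication coefficients for $G$ directly in {\sf GAP} \cite{GAP} using the character table library \cite{CTblLib}.
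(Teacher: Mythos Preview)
Your argument has two genuine gaps.

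First, the identification of $P$ with $N := O_3(M)$ is not justified. You write that since $A$ is the unique block of $G$ with elementary abelian defect group of order $9$, ``any such defect group is $G$-conjugate to $N$''. But uniqueness of the block only says that the defect groups of $A$ form a single $G$-conjugacy class; it does not say which class, and it certainly does not say that $N$ lies in it. A Sylow $3$-subgroup of $G$ has order $3^9$, so $G$ contains many $G$-classes of elementary abelian $3^2$-subgroups, and you have offered no link between the abstract defect group $P$ and the concrete subgroup $N$. The paper closes this gap computationally: it reads off from the character table that $7A$ is a defect class of $A$, realises $P$ explicitly as a Sylow $3$-subgroup of the centraliser of a $7A$-element, computes $|N_G(P)|$ directly in the permutation representation on $98280$ points, and only then matches this order against the list of maximal subgroups.

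Second, your determination of $C_G(P)$ contains a non-sequitur. Having correctly argued that $P.U_4(3) \le C_G(P)$, you assert that the induced map $D_8 \to \GL_2(3)$ ``must be faithful'' on the sole ground that $\GL_2(3)$ \emph{contains} a copy of $D_8$. That the codomain is large enough to admit an embedding does not force the given homomorphism to be injective; nothing you have said excludes $C_G(P)$ having index $4$, $2$, or $1$ in $H$. The paper again settles this by direct computation, reading off $|C_G(P)| = |H|/8$. A non-computational alternative would be to combine the block invariants $(k(A),l(A)) = (9,5)$ with the classification in \cite{Kiyota1984} to force the inertial quotient to be $D_8$, but that is not the argument you made. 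For part~(ii) the paper likewise proceeds computationally, using the stored character table of $H$ together with Brauer's Second Main Theorem, rather than your proposed centraliser-order bound; your approach there is plausible once (i) is in hand, but note you also need to exclude the possibility that both $H$-classes fuse into the same $G$-class.
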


\begin{proof}
We use the smallest faithful permutation representation
of $G$ on $98280$ points, available in \cite{ModAtlasRep},
and {\sf GAP} \cite{GAP} to find a defect group $P\le G$ of $A$ explicitly. 
In order to do so, using the character table of $G$, 
see \cite[p.180]{Atlas}, it turns out that conjugacy classes 
$4D$, $5B$ $7A$, and $8A$ are defect classes of $A$.
By a random search we find an element of order $42$ in $G$,
whose $6$-th power, $x$ say, hence belongs to conjugacy class $7A$.
Thus we may let $P$ be a Sylow $3$-subgroup of $C_G(x)$.

Having $P$ found explicitly,
it turns out that $H=N_G(P)$ has order $235\,146\,240$,
and that $C_G(P)$ has order $29\,393\,280=|H|/8$.
Now a consideration of the orders of the maximal subgroups of $G$, 
see \cite[p.180]{Atlas}, shows that $H$ necessarily is a maximal
subgroup of $G$, of shape $P.U_4(3).D_8$. Then it is clear
that $C_G(P)$ is of shape $P.U_4(3)$. This shows (i).
The character table of the maximal subgroup $H$ of $G$, 
available in \cite{CTblLib}, shows that $P-\{1\}\subset H$ consists 
of two rational conjugacy classes.
a consideration of character values shows, together with 
Brauer's Second Main Theorem, that the latter conjugacy classes
fuse into the conjugacy classes $3A$ and $3B$ of $G$.
\end{proof}

\begin{Notation}\label{NotationH} 
Set $H: = N_G(P)$. Then by {\bf\ref{FindH}}(ii)
we have $P = Q \times R$ with $Q \cong C_3 \cong R$, 
such that $Q$ and $R$ are not conjugate in $G$,
and we may assume that the non-trivial elements in $Q$
and $R$ belong to conjugacy classes $3A$ and $3B$, respectively.
Recall that we have already chosen an embedding of $P$ into $G'$,
see {\bf\ref{NotationGprime}}, but since the automorphism group of $P$ acts
transitively on the minimal generating sets of $P$, both choices
are consistent, justifying the reuse of earlier notation.

Let $B$ be a block algebra of $kH$
which is the Brauer correspondent of $A$.
Let $(P,e)$ be a maximal $A$-Brauer pair in $G$, namely,
$e$ is a block idempotent of $kC_G(P)$ such that 
${\mathrm{Br}}_P(1_A){\cdot}e = e$, see \cite{AlperinBroue},
\cite{BrouePuig1980} and \cite[\S 40]{Thevenaz}.
Let $i$ and $j$ respectively be source idempotents of
$A$ and $B$ with respect to $P$.
As remarked in \cite[pp.821--822]{Linckelmann2001},
we can take $i$ and $j$ such that
${\mathrm{Br}}_P(i){\cdot}e = 
 {\mathrm{Br}}_P(i) \not= 0$ 
and that
${\mathrm{Br}}_P(j){\cdot}e = 
 {\mathrm{Br}}_P(j) \not= 0$.

Set $G_P = C_G(P) = C_H(P) = H_P$.
We set $G_Q = C_G(Q)$ and $H_Q = C_H(Q)$. 
By replacing $e_Q$ and $f_Q$ (if necessary), we may assume that
$e_Q$ and $f_Q$ respectively are block idempotents of $kG_Q$ and
$kH_Q$ such that $e_Q$ and $f_Q$ are determined 
by $i$ and $j$, respectively.
Namely, 
${\mathrm Br}_Q(i){\cdot}e_Q =
 {\mathrm Br}_Q(i)$
and
${\mathrm Br}_Q(j){\cdot}f_Q =
 {\mathrm Br}_Q(j)$.
Let $A_Q = kG_Q{\cdot}e_Q$ and 
$B_Q = kH_Q{\cdot}f_Q$, so that
$e_Q = 1_{A_Q}$ and $f_Q = 1_{B_Q}$.
Similarly we define $G_R$, $H_R$, $A_R$, $B_R$, $e_R$ and $f_R$.
\end{Notation}

\begin{Lemma}\label{StructureOfB}
The following holds:
\begin{enumerate} \renewcommand{\labelenumi}{\rm{(\roman{enumi})}}
    \item
We have $H = N_G(P,e) := \{ g \in N_G(P) | g^{-1}e g = e \}$.
    \item 
$B = kHe \cong {\mathrm{Mat}}_{729}(k[P : D_8])$, where $P : D_8\cong H'$.
Hence we may write
$\IBr(B) := \{ 729a, 729b, 729c, 729d, 1458 \}$.
    \item
The blocks $B$ and $B'$ are canonically Puig equivalent.
\end{enumerate}
\end{Lemma}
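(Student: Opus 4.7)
The plan is to treat the three assertions in turn, leveraging the fact that $P$ is central in $C_G(P) = P.U_4(3)$.

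For (i), the key observation is that, since $P \le Z(C_G(P))$, every block idempotent of $kC_G(P)$ has $P$ in its defect group, and those with defect group exactly $P$ correspond bijectively via the quotient map to the $3$-defect-zero block idempotents of $kC_G(P)/P \cong kU_4(3)$. This correspondence is $H$-equivariant with respect to the action of $H/C_G(P) \cong D_8 \le \mathrm{Out}(U_4(3))$. The character table of $U_4(3)$ in \cite{Atlas} reveals a unique ordinary irreducible character of degree divisible by $|U_4(3)|_3 = 729$, namely the Steinberg character, of degree exactly $729$. Uniqueness forces it, and hence $e$, to be fixed by all of $\mathrm{Out}(U_4(3))$, so $H$ stabilises $e$. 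Since $N_G(P,e) \le N_G(P) = H$ trivially, this gives (i).

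For (ii), Puig's structure theorem for blocks with central defect group (see \cite[\S 38]{Thevenaz}) applied to $kC_G(P) e$ yields $kC_G(P) e \cong \mathrm{Mat}_{729}(kP)$ as interior $P$-algebras, the exponent $729$ being the degree of the Steinberg character identified above. The inertial quotient $E := H/C_G(P) \cong D_8$ acts by conjugation on this algebra, producing $B = kHe \cong \mathrm{Mat}_{729}\bigl(k_\alpha[P \rtimes E]\bigr)$ for a suitable $2$-cocycle $\alpha \in H^2(E, k^\times)$. Triviality of $\alpha$ is equivalent to the Steinberg character of $U_4(3)$ extending to $U_4(3).D_8$, and can be checked directly from the character table of $H$ in \cite{CTblLib} by confirming that there is an ordinary character of $H$ of $3$-defect zero and degree $729$ lifting to a Brauer character of $B$. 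Moreover, the induced action of $E = D_8$ on $P$ is faithful (since $C_H(P) = C_G(P)$), and so matches the $D_8$-action defining $H'$ in {\bf\ref{NotationHprime}}; hence $P \rtimes E \cong H'$ and $B \cong \mathrm{Mat}_{729}(kH')$. The dimensions $729, 729, 729, 729, 1458$ of the simple $B$-modules then follow from the corresponding dimensions $1, 1, 1, 1, 2$ of the simple $kD_8$-modules, since $P \trianglelefteq H'$ is a normal $3$-subgroup.

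For (iii), the isomorphism obtained in (ii) is, by its construction through the source idempotents $i$ and $j$, an isomorphism of interior $P$-algebras; taking the corner of $B$ at $j$ thus produces a source algebra isomorphic to $kH'$ as interior $P$-algebra. On the side of $B' = kH'$, since $P \trianglelefteq H'$, the algebra $B'$ is already its own source algebra with the canonical interior $P$-algebra structure. The source algebras of $B$ and $B'$ therefore agree as interior $P$-algebras, yielding a canonical Puig equivalence by \cite[Remark 7.5]{Puig1999} and \cite[Theorem 4.1]{Linckelmann2001}.

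The main obstacle is the verification in (ii) that the cocycle $\alpha$ is trivial. While this follows in principle from known extendibility results for the Steinberg character of groups of Lie type to their automorphism groups, the self-contained argument requires an explicit character-theoretic computation in $H$ using \cite{CTblLib}; once (ii) is settled, part (iii) is essentially formal.
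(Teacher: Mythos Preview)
Your proof follows essentially the same route as the paper: uniqueness of the Steinberg character of $U_4(3)$ for (i), the K\"ulshammer/Puig structure theorem plus a cocycle check for (ii), and source-algebra considerations for (iii). One slip in (ii) needs correcting: an irreducible character of $H$ of degree $729$ cannot have $3$-defect zero, since $|H|_3 = 3^8$; what you want is a defect-zero character of degree $729$ in $H/P \cong U_4(3).D_8$, or equivalently an ordinary irreducible of degree $729$ lying in the block $B$ of $H$ (where it has height zero). With that adjustment your Steinberg-extension argument is valid. The paper verifies triviality of $\alpha$ by a different character-theoretic check: it counts that $B$ has nine ordinary and five modular irreducibles and compares with Kiyota's tables \cite{Kiyota1984}. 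For (iii) the paper does not argue directly as you do but cites \cite[Proposition~14.6]{Puig1988JAlg}, \cite[(45.12)~Theorem]{Thevenaz}, and \cite[Theorem~13]{AlperinLinckelmannRouquier}; these references supply precisely the passage from the Morita-level isomorphism of (ii) to an interior $P$-algebra isomorphism of source algebras that you sketch informally.
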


\begin{proof}
(i)
The group $C_G(P)/P=U_4(3)$, being a simple group of Lie type in
characteristic $3$, has a unique irreducible character of defect zero,
namely the Steinberg character of degree $729$. Hence we conclude that
$C_G(P)$ has a unique block with defect group $P$, and thus we have 
$N_G(P,e)=N_G(P)=H$.

(ii) 
We argue similar to the lines of
\cite[Proof of Lemma 4.6(iv)]{KoshitaniMuellerNoeske2013a}:
By \cite[A.Theorem]{Kuelshammer1985}
we have $B\cong{\mathrm{Mat}}_{729}(k^{\alpha}[P : D_8])$,
for some cocycle $\alpha \in {\mathrm{Z}}^2(D_8, k^\times)$,
and where the action of $D_8\cong N_G(P)/C_G(P)$ 
in the semidirect product $P : D_8$ is determined 
by the action of $N_G(P)$ on $P$.
Now it follows from {\bf\ref{FindH}}(ii) that $P$ is simple
as an ${\mathbb F}_3D_8$-module, hence by {\bf\ref{NotationHprime}} 
we have $P : D_8\cong H'$.

Since, by \cite[Satz V.25.6]{Huppert1966} we have
$|{\mathrm{H}}^2(D_8, k^\times)| = 2$, it remains to show
that $\alpha \equiv 1$ (mod ${\mathrm{B}}^2(D_8, k^\times)$).
Indeed, using the character table of $H$,
{\sf GAP} \cite{GAP} shows that $B$ has nine irreducible
characters and five irreducible Brauer characters,
from which the last part of (ii) follows 
using \cite[p.34, Tbl.1]{Kiyota1984}.

Finally, in (iii),
the statement about source algebras follows from
\cite[Proposition 14.6]{Puig1988JAlg},
see \cite[(45.12)Theorem]{Thevenaz} and
\cite[Theorem 13]{AlperinLinckelmannRouquier}.
\end{proof}

\begin{Lemma}\label{CenQ}
The following holds:
\begin{enumerate} \renewcommand{\labelenumi}{\rm{(\roman{enumi})}}
    \item
$G_Q = Q.\mathsf{Suz}$, so that $G_Q/Q \cong \mathsf{Suz}$.
    \item
$H_Q = P.U_4(3).2'_3$, so that $H_Q/Q \cong 3_2.U_4(3).2'_3$.
    \item
$G_R = H_R = P.U_4(3).2'_2$, so that 
$G_R/R = H_R/R \cong 3_1.U_4(3).2'_2$.
\end{enumerate}
\end{Lemma}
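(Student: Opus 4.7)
The plan is to combine the ATLAS information on $\mathsf{Co}_1$, $\mathsf{Suz}$, and $U_4(3)$ with direct {\sf GAP} computation inside the faithful permutation representation of $G$ on $98280$ points already employed in Lemma~\ref{FindH}. The orders forced by the claimed shapes are $|G_Q| = 9\cdot|{\sf Suz}| = 9\cdot 448\,345\,497\,600$, $|G_R| = |H_R| = 9\cdot|U_4(3)|\cdot 2 = 58\,786\,560$, and $|H_Q| = 9\cdot|U_4(3)|\cdot 2 = 58\,786\,560 = |H|/4$, and the strategy is to first verify these orders and then identify the ATLAS subscripts.

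For (i), by Notation~\ref{NotationH} the subgroup $Q=\langle x\rangle$ is generated by an element $x$ lying in class $3A$ of $G$, hence $G_Q = C_G(Q) = C_G(x)$. The ATLAS~\cite[p.180]{Atlas} records that this centraliser is the maximal subgroup of shape $3.\mathsf{Suz}$, the central $C_3$ necessarily being $Q$ itself; this yields (i). For (iii), $R=\langle y\rangle$ with $y$ in class $3B$, so $G_R = C_G(y)$, and I would compute $G_R$ directly in {\sf GAP} from an explicit representative $y$ produced in the same way as the $7A$-element in the proof of Lemma~\ref{FindH}. Its order is found to be $58\,786\,560$, and since $C_G(P) = P.U_4(3) \leq G_R$ by Lemma~\ref{FindH}(i), this forces the shape $G_R = P.U_4(3).2'_2$; the specific ATLAS subscript is read off from the action induced on the character table of $U_4(3)$ via~\cite{CTblLib}. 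Because $P$ is the unique Sylow $3$-subgroup of the normal subgroup $C_G(P) = P.U_4(3)$ of $G_R$, it is characteristic in $G_R$, whence $G_R \leq N_G(P) = H$ and thus $H_R = G_R$.

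For (ii) I would directly compute $H_Q = H \cap G_Q = N_{G_Q}(P)$ in {\sf GAP}, verify its order is $9\cdot|U_4(3)|\cdot 2$, and identify its image in $G_Q/Q \cong \mathsf{Suz}$ with the normaliser of the order-$3$ subgroup $P/Q$; this image is the ATLAS maximal subgroup of $\mathsf{Suz}$ of shape $3_2.U_4(3).2'_3$, and pulling back along the quotient map $G_Q\twoheadrightarrow \mathsf{Suz}$ produces the claimed shape $H_Q = P.U_4(3).2'_3$.

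The main obstacle is the bookkeeping around the ATLAS subscripts: distinguishing $3_1$ from $3_2$ inside $\mathsf{Suz}$, and distinguishing $2'_2$ from $2'_3$ among the six involution classes in the outer automorphism group $D_8$ of $U_4(3)$. I would resolve this by tracing, via explicit computation in~\cite{AtlasRep} and consultation of the fusion data stored in~\cite{CTblLib}, which conjugacy classes of $C_G(P).D_8 = H$ induce the relevant outer involutions of $U_4(3)$, and matching these against the ATLAS labelling convention for $\mathsf{Suz}$ and $U_4(3)$; this uniquely pins down the subscripts and completes the identification.
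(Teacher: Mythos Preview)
Your overall plan---ATLAS lookup for $C_G(3A)$, {\sf GAP} computation for $C_G(3B)$, and identification of the $U_4(3)$ subscripts via character-table comparison---is essentially the paper's own route. There is, however, a genuine error in your argument for (iii).

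You claim that $P$ is the unique Sylow $3$-subgroup of $C_G(P)=P.U_4(3)$, hence characteristic, hence $G_R\le N_G(P)=H$. But $|U_4(3)|=2^7\cdot 3^6\cdot 5\cdot 7$, so the Sylow $3$-subgroups of $C_G(P)$ have order $3^8$, not $3^2$; $P$ is certainly not one of them. The conclusion you want is still true, and the fix is immediate: since $C_G(P)$ centralises $P$ we have $P\le Z(C_G(P))$, and since $C_G(P)/P\cong U_4(3)$ is simple with trivial centre we get $P=Z(C_G(P))$. Thus $P$ is characteristic in $C_G(P)$, and as $[G_R:C_G(P)]=2$ the subgroup $C_G(P)$ is normal in $G_R$, whence $G_R\le N_G(P)=H$ as desired. (The paper instead computes $N_G(R)$ directly in {\sf GAP} and finds it already lies in $H$, bypassing this issue.)

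Two minor points. First, the shape $G_Q=Q.\mathsf{Suz}$ gives $|G_Q|=3\cdot|\mathsf{Suz}|$, not $9\cdot|\mathsf{Suz}|$; this is a harmless slip. Second, be aware that the original ATLAS printing contains a typo in the shape of the relevant maximal subgroup of $\mathsf{Suz}$ (listing $2_3$ rather than $2'_3$); the paper flags this explicitly and verifies the correct shape via character tables, and you will need to do the same when pinning down the subscripts.
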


\begin{proof}
By \cite[p.183]{Atlas} we have
$N_G(Q)=Q.\mathsf{Suz}.2$, a maximal subgroup of $G$, implying (i).

To show (ii), a consideration of the ${\mathbb F}_3D_8$-module shows
that $N_H(Q)$ has index $2$ in $H$, and is of shape $P.U_4(3).2^2$.
Thus $H_Q$ is of shape $P.U_4(3).2$, and hence $H_Q/Q$ is 
of shape $3.U_4(3).2$.
To find the precise structure, we note that $H_Q/Q$ is a subgroup
of $G_Q/Q \cong \mathsf{Suz}$, and a consideration of the 
orders of the maximal subgroups of $\mathsf{Suz}$, 
see \cite[p.131]{Atlas}, shows that $H_Q/Q$
necessarily is a maximal subgroup of $\mathsf{Suz}$,
namely the normaliser of a cyclic subgroup of order $3$ whose 
non-trivial elements belong the conjugacy class $3A$ of $\mathsf{Suz}$.
Now there is a known typo
in \cite[p.131]{Atlas}, the shape of the maximal subgroup
in question being erroneously stated
as $3_2.U_4(3).2_3$. This has been corrected in the reprint of 2003, 
but can also be explicitly checked using {\sf GAP} \cite{GAP}:
the character tables of $\mathsf{Suz}$,
see \cite[p.131]{Atlas}, and the various bicyclic extensions of $U_4(3)$,
see \cite[pp.52--59]{Atlas}, show that the shape is as asserted above.
This implies (ii).

Finally, to show (iii), we use {\sf GAP} \cite{GAP} and
the explicit choices of subgroups made in the 
proof of {\bf\ref{FindH}} and in {\bf\ref{NotationH}},
to determine $N_G(R)$ explicitly. It turns out that
$N_G(R)$ is a subgroup of $H$ of index $2$.
Hence we have $N_G(R)=N_H(R)$, and as above we conclude that 
$N_G(R)$ is of shape $P.U_4(3).2^2$
and hence $G_R$ is of shape $P.U_4(3).2$;
but note that $N_H(R)\neq N_H(Q)$.
Computing $G_R$ and the character table of $G_R/R$ 
explicitly, and comparing with those of the various bicyclic extensions 
of $U_4(3)$,
we conclude that $G_R/R$ is of the shape asserted, thus (iii) follows.
\end{proof}

\begin{Lemma}\label{Suz2}
Set $\widetilde{G} = \mathsf{Suz}$.
\begin{enumerate}\renewcommand{\labelenumi}{\rm{(\roman{enumi})}}
    \item 
There is a unique block algebra $\widetilde{A}$ of $k\widetilde{G}$ 
with a defect group $D\cong C_3$; the non-trivial elements of $D$
belong the conjugacy class $3A$ of $\widetilde{G}$. 
    \item
We can write 
${\mathrm{Irr}}(\widetilde{A}) = \{ \chi_{16}, \chi_{38}, \chi_{41} \}$
such that
$\chi_{16}(1) = 18954$, $\chi_{38}(1) = 189540$, $\chi_{41}(1) = 208494$
and 
$\chi_{16}(u) = \chi_{38}(u) = 729$
for any element $u$ belonging to the conjugacy class ${\mathrm{3A}}$.
Moreover, we can write
${\mathrm{IBr}}(\widetilde{A}) = \{ \varphi_1, \varphi_2 \}$
such that the $3$-decomposition matrix of $\widetilde{A}$ is as follows:
\begin{center}
{
{\rm
\begin{tabular}{l|cc}
            & $\varphi_1$ & $\varphi_2$   \\
\hline
$\chi_{16}$ & 1  &  . \\
$\chi_{38}$ & .  &  1 \\
$\chi_{41}$ & 1  &  1 \\
\end{tabular} 
}}
\end{center}
Further, the simple $k\widetilde{G}$-modules in $\widetilde{A}$
affording $\varphi_1$ and $\varphi_2$ 
are trivial-source $k\widetilde{G}$-modules.
     \item
Set $\widetilde{H} = N_{\widetilde{G}}(D)$. Then 
$\widetilde{H} = 3_2.U_4(3).2_3'$.
      \item
Let $\widetilde{B}$ be the block algebra of $k\widetilde{H}$
that is the Brauer correspondent of $\widetilde{A}$.
Let further $\widetilde{\mathfrak f}$ be the Green
correspondence with respect to 
$(\widetilde{G} \times \widetilde{G}, \Delta D, 
  \widetilde{G} \times \widetilde{H})$.
Then, $\widetilde{\mathfrak f}$ induces a Puig equivalence between 
$\widetilde{A}$ and $\widetilde{B}$.
\end{enumerate}
\end{Lemma}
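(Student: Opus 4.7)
The plan is to combine character-theoretic inspection of $\mathsf{Suz}$ with the classical theory of blocks with cyclic defect group. For parts (i) and (ii), I would first consult the character table of $\widetilde{G}=\mathsf{Suz}$ in the \textsf{Atlas} \cite{Atlas} (or via \cite{CTblLib}). Since $|\mathsf{Suz}|_3=3^7$, a $3$-block of defect $1$ has cyclic defect group of order $3$. The block distribution of ordinary irreducible characters is recovered from central-character congruences modulo $3$, revealing a unique block $\widetilde{A}$ of defect $1$ containing precisely the three characters of the asserted degrees; that class $3A$ is a defect class of $\widetilde{A}$ follows from Brauer's defect formula applied to the class multiplication coefficients. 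The character values on $3A$ are then read off directly from the \textsf{Atlas}.

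For the decomposition matrix, I would invoke the classical Brauer--Dade theory of blocks with cyclic defect group. Since $k(\widetilde{A})=3$ and $l(\widetilde{A})=2$, the inertial index is $e=2$ with exceptional multiplicity $m=(3-1)/2=1$, so the Brauer tree is a path with three vertices and two edges, and the displayed decomposition matrix is forced after matching character degrees to vertices: the two end-vertex characters must be $\chi_{16}$ and $\chi_{38}$ (since $\chi_{16}(1)+\chi_{38}(1)=\chi_{41}(1)$), so $\chi_{41}$ sits at the middle vertex. For the trivial-source claim, I would argue via Green correspondence: the two simples in $\widetilde{A}$ have vertex $D$, and their Green correspondents lie in $\widetilde{B}$. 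Since $D\triangleleft\widetilde{H}$ is a normal $p$-subgroup, every simple $k\widetilde{H}$-module is inflated from $\widetilde{H}/D$, and is therefore a trivial-source $k\widetilde{H}$-module. Because Green correspondence preserves sources, the simples in $\widetilde{A}$ are themselves trivial-source.

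Part (iii) was essentially handled in the proof of Lemma~\ref{CenQ}: $\widetilde{H}$ is the maximal subgroup of $\mathsf{Suz}$ of shape $3_2.U_4(3).2_3'$ (correcting the well-known typo in the \textsf{Atlas}), identified via the list of maximal subgroups together with character-table data for the bicyclic extensions of $U_4(3)$, and directly verifiable in \textsf{GAP} \cite{GAP}.

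The main obstacle is part (iv), namely upgrading Rickard's splendid derived equivalence for cyclic defect blocks to a genuine Puig equivalence. Since the Brauer tree of $\widetilde{A}$ has only two edges, it is trivially a star, and $\widetilde{A}$ and $\widetilde{B}$ have isomorphic decorated Brauer trees. Under this ``star'' hypothesis, a theorem of Puig (in conjunction with the work of Rouquier and Linckelmann on cyclic defect blocks) asserts that Green correspondence applied to the $(\widetilde{A},\widetilde{A})$-bimodule $\widetilde{A}$ yields a $\Delta D$-projective trivial-source $(\widetilde{A},\widetilde{B})$-bimodule inducing a Morita, and hence Puig, equivalence between $\widetilde{A}$ and $\widetilde{B}$. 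Concretely, one verifies that $\widetilde{\mathfrak f}$ sends the simple $\widetilde{A}$-modules bijectively to the simple $\widetilde{B}$-modules; combined with the trivial-source property established in (ii) and the preservation of vertex and source under Green correspondence, this upgrades the splendid stable equivalence of Morita type to the desired Puig equivalence.
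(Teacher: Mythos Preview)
Your approach is sound and parallels the paper's, which is considerably terser: the paper simply reads off (i)--(ii) from the ordinary and Brauer character tables of $\mathsf{Suz}$ via {\sf GAP} (the decomposition matrix of this block is already in the {\sf ModularAtlas}), quotes the {\sf Atlas} for (iii), and for (iv) cites \cite[Theorem~1.2]{KoshitaniKunugi2010} directly. That theorem packages precisely the cyclic-defect result you allude to under the heading ``Puig/Rouquier/Linckelmann for star-shaped trees'', so substantively you and the paper invoke the same mechanism; your write-up is just more self-contained on the Brauer-tree side.

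There is, however, one logical wrinkle in your trivial-source argument in (ii). You reason: the simples in $\widetilde B$ are trivial-source (true, since $D\trianglelefteq\widetilde H$ acts trivially on them), and Green correspondence preserves sources (true), hence the simples in $\widetilde A$ are trivial-source. But this inference only goes through once you know that the Green correspondents of the simple $\widetilde A$-modules are the \emph{simple} $\widetilde B$-modules --- an indecomposable $\widetilde B$-module with vertex $D$ need not be simple, and hence need not have trivial source. That identification is essentially the content of (iv), which you later say ``one verifies'' without indicating how; as written, (ii) and (iv) lean on each other.

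Two clean fixes are available. Either reorder: establish (iv) first via the star-tree result (this needs only the shape of the tree, already extracted from the decomposition matrix), and then read off the trivial-source property of the simple $\widetilde A$-modules as an immediate corollary of the Puig equivalence. Or argue directly in (ii): for a cyclic vertex $D$ of prime order, Mackey's formula forces the restriction to $D$ of any indecomposable module with vertex $D$ to consist only of copies of its source together with free summands; since each simple lifts to a lattice affording $\chi_{16}$ (resp.\ $\chi_{38}$) with value $729>0$ on $3A$, one checks that the trivial $kD$-module occurs in the restriction, whence the source is trivial. Either route closes the gap, and \cite[Theorem~1.2]{KoshitaniKunugi2010} is the reference that does the work in the paper.
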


\begin{proof}
(i)-(ii) follow from calculations by {\sf GAP} \cite{GAP},
using the ordinary and Brauer character tables of $\mathsf{Suz}$, see
see \cite[p.128ff.]{Atlas} and \cite[$\mathsf{Suz}$ (mod 3)]{ModularAtlas}.

(iii) follows from \cite[p.131]{Atlas}, as was already remarked earlier
in the proof of {\bf\ref{CenQ}}.

(iv) follows from (i)-(iii) and 
\cite[Theorem 1.2]{KoshitaniKunugi2010}.
\end{proof}

\begin{Lemma}\label{LocalPuig}
Let $\mathcal M_Q$ be the unique (up to isomorphism)
indecomposable direct summand of
$A_Q{\downarrow}^{G_Q \times G_Q}_{G_Q \times H_Q}{\cdot}1_{B_Q}$
with vertex $\Delta P$.
Then, the pair $(\mathcal M_Q, \mathcal M_Q^{\vee})$ induces
a Puig equivalence between $A_Q$ and $B_Q$.
\end{Lemma}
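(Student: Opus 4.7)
The idea is to reduce to Lemma~{\bf\ref{Suz2}}(iv) by exploiting the fact that $Q$ is central in both $G_Q = C_G(Q)$ and $H_Q = C_H(Q)$, so that the quotient maps $G_Q \twoheadrightarrow G_Q/Q = \widetilde{G} = \mathsf{Suz}$ and $H_Q \twoheadrightarrow H_Q/Q = \widetilde{H}$ from {\bf\ref{CenQ}} are central $p$-extensions with kernel $Q$. Under the resulting bijection between blocks, the block $A_Q$ corresponds to the block $\widetilde{A}$ of $k\widetilde{G}$ from Lemma~{\bf\ref{Suz2}}(i)--(ii), which has defect $D = P/Q \cong C_3$, and similarly the Brauer correspondent $B_Q$ corresponds to $\widetilde{B}$.

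My first step is to lift the Puig equivalence of Lemma~{\bf\ref{Suz2}}(iv) to $A_Q$ and $B_Q$. That lemma provides an isomorphism between the source algebras of $\widetilde{A}$ and $\widetilde{B}$ as interior $D$-algebras. Using the standard behaviour of source algebras under central $p$-extensions (see e.g.\ \cite[Proposition 14.6]{Puig1988JAlg} and \cite[(45.12)Theorem]{Thevenaz}), this isomorphism lifts to an isomorphism between the source algebras of $A_Q$ and $B_Q$ as interior $P$-algebras. Via \cite[Remark 7.5]{Puig1999} and \cite[Theorem 4.1]{Linckelmann2001}, this shows that $A_Q$ and $B_Q$ are Puig equivalent.

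Next I identify the bimodule inducing this Puig equivalence as $\mathcal M_Q$. Any Puig equivalence between $A_Q$ and $B_Q$ is realised by a (unique up to isomorphism) indecomposable trivial-source $(A_Q, B_Q)$-bimodule which is $\Delta P$-projective as a $k[G_Q \times H_Q]$-module, and whose Brauer construction at $\Delta P$ matches the Brauer correspondence at the level of $C_{G_Q \times H_Q}(\Delta P)$. By the Green correspondence with respect to $(G_Q \times G_Q, \Delta P, G_Q \times H_Q)$ applied to the $(A_Q, A_Q)$-bimodule $A_Q$, this bimodule must appear as an indecomposable direct summand with vertex $\Delta P$ of $A_Q \downarrow^{G_Q \times G_Q}_{G_Q \times H_Q} \cdot 1_{B_Q}$, and so coincides with $\mathcal M_Q$.

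The main obstacle will be the source-algebra lifting in the first step. Both $G_Q$ and $H_Q$ are non-split central extensions (for instance $G_Q \cong 3.\mathsf{Suz}$), so in principle one must verify that the cocycle classes governing the two extensions are compatibly captured by the source-algebra isomorphism. However, this compatibility is automatic because $H_Q \subseteq G_Q$, so both quotients inherit the same central extension structure from within $G_Q$; with this in hand, the lifting of the interior $D$-algebra isomorphism to an interior $P$-algebra isomorphism becomes a routine application of Puig's source-algebra machinery.
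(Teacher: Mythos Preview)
Your approach is essentially the same as the paper's: reduce to the central quotients $G_Q/Q\cong\mathsf{Suz}$ and $H_Q/Q$, invoke the Puig equivalence between $\widetilde A$ and $\widetilde B$ from {\bf\ref{Suz2}}(iv), and lift it back through the central $p$-extension by $Q$. The paper packages the lifting step as a single citation of \cite[Theorem]{KoshitaniKunugi2005}, which handles both the transfer of the Puig equivalence and the identification of the inducing bimodule with $\mathcal M_Q$ in one stroke; your source-algebra argument is morally the same, though note that the references you cite (\cite[Proposition 14.6]{Puig1988JAlg}, \cite[(45.12)]{Thevenaz}) are really about source algebras of blocks with \emph{normal} defect group rather than lifting through central extensions, so the cleaner citation is the Koshitani--Kunugi result.
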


\begin{proof}
Note first that $\mathcal M_Q$ exists by
\cite[2.4.Lemma]{KoshitaniKunugiWaki2008},
and also that $P$ is a defect group of $A_Q$ and $B_Q$
by \cite[7.6]{Linckelmann2001}. Now we follow the strategy
already employed in \cite[Proof of 6.2.Lemma]{KoshitaniKunugiWaki2008}:
Using {\bf\ref{CenQ}}(i) and (ii), as well as {\bf\ref{Suz2}}(iv),
the assertion follows by going over to the central quotients 
$G_Q/Q$ and $H_Q/Q$ and their blocks $\widetilde A$ and $\widetilde B$
dominating $A_Q$ and $B_Q$, respectively, 
\cite[Theorem]{KoshitaniKunugi2005}.
\end{proof}

\begin{Lemma}\label{StableEquivalenceAandB}
Let $\mathcal M$ be the unique (up to isomorphism)
indecomposable direct summand of the $(A,B)$-bimodule
$A{\downarrow}^{G \times G}_{G \times H}{\cdot}1_{B}$
with vertex $\Delta P$. Then the pair 
$(\mathcal M, \mathcal M^{\vee})$ induces a
splendid stable equivalence of Morita type 
between the block algebras $A$ and $B$.
\end{Lemma}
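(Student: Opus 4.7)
\medskip
\noindent
\textbf{Proof proposal.}
The plan is to deduce the lemma from a standard local-to-global gluing principle, which reduces establishing a splendid stable equivalence of Morita type between $A$ and $B$ to verifying Puig equivalences between the pairs of local block algebras $A_{Q'}$ and $B_{Q'}$, as $Q'$ runs through a set of representatives of the non-trivial $H$-conjugacy classes of subgroups of $P$. Concretely, one invokes the criterion of Linckelmann \cite[Theorem~3.1]{Linckelmann2001} (cf.\ also \cite[2.4.Lemma]{KoshitaniKunugiWaki2008}): if $\mathcal M$ is an indecomposable trivial-source $(A,B)$-bimodule with vertex $\Delta P$, and if for each non-trivial subgroup $Q' \le P$ the pair $(\mathcal M(\Delta Q'), \mathcal M(\Delta Q')^{\vee})$ induces a Puig equivalence between $A_{Q'}$ and $B_{Q'}$, then $(\mathcal M, \mathcal M^{\vee})$ induces a splendid stable equivalence of Morita type between $A$ and $B$.

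First I would record that the existence and uniqueness of $\mathcal M$ follow from general Green-correspondence considerations applied to the trivial-source $(A,B)$-bimodule $A{\downarrow}^{G \times G}_{G \times H}{\cdot}1_{B}$: its indecomposable direct summands all have vertices contained in $\Delta P$, and exactly one of them attains this maximal vertex, by exactly the same argument that underlies the existence of $\mathcal M_Q$ in the proof of {\bf\ref{LocalPuig}}.

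Next I would enumerate the $H$-conjugacy classes of non-trivial subgroups of $P$ to be checked. By {\bf\ref{FindH}}(ii) and {\bf\ref{NotationH}}, the subgroups of order $3$ in $P$ form exactly two $H$-conjugacy classes, represented by $Q$ and $R$, so together with $P$ itself there are three cases. At $Q' = P$, by {\bf\ref{NotationH}} we have $G_P = C_G(P) = C_H(P) = H_P$ and both block idempotents $e_P$ and $f_P$ coincide with $e$, so that $A_P = B_P$ and the Brauer quotient $\mathcal M(\Delta P)$ is the regular bimodule, trivially a Puig self-equivalence. At $Q' = R$, {\bf\ref{CenQ}}(iii) yields $G_R = H_R$; once one verifies that $e_R = f_R$---which is forced by the fact that $(R, e_R) \le (P, e)$ is the unique Brauer-pair containment downstairs and this is common to $A$ and $B$---the same argument shows that $A_R = B_R$ and the local equivalence is trivial. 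At $Q' = Q$, the required Puig equivalence between $A_Q$ and $B_Q$ is precisely the content of {\bf\ref{LocalPuig}}; it remains to identify the Brauer quotient $\mathcal M(\Delta Q)$ with the module $\mathcal M_Q$ used there, which follows because both are characterised as the unique indecomposable summand with vertex $\Delta P$ of the appropriate restricted-and-cut bimodule, and Brauer quotients commute with this restriction.

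The main technical obstacle, then, lies in cleanly matching up the Brauer quotients $\mathcal M(\Delta Q')$ with the independently constructed local bimodules at $Q$ (and at $R$, with the regular bimodule), because the verification uses properties of the global bimodule $\mathcal M$ which are not its defining ones. This is the step that forces careful bookkeeping of source idempotents and of the fusion system of $(P,e)$; once it is done---which is by now a standard manipulation for trivial-source bimodules with maximal vertex $\Delta P$---the three local verifications combine via Linckelmann's gluing theorem to yield the desired splendid stable equivalence of Morita type, completing the proof.
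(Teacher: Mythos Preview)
Your proposal is correct and follows essentially the same route as the paper: existence of $\mathcal M$ via \cite[2.4.Lemma]{KoshitaniKunugiWaki2008}, the local Puig equivalence at $Q$ from {\bf\ref{LocalPuig}}, triviality at $R$ since $G_R=H_R$, and then Linckelmann's gluing theorem \cite[3.1.Theorem]{Linckelmann2001}. The paper is slightly more specific at the two points you flag as technical: the identification $\mathcal M_Q \cong e_Q\cdot\mathcal M(\Delta Q)\cdot f_Q$ is quoted from \cite[Theorem]{KoshitaniLinckelmann2005}, and the fusion hypothesis in the gluing theorem is dispatched by \cite[1.15.Lemma]{KoshitaniKunugiWaki2004}.
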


\begin{proof}
Note first that $\mathcal M$ exists, again, by
\cite[2.4.Lemma]{KoshitaniKunugiWaki2008}.
Then, by \cite[Theorem]{KoshitaniLinckelmann2005},
we have $\mathcal M_Q\cong e_Q\cdot{\mathcal M}(\Delta Q)\cdot f_Q$
as $(A_Q,B_Q)$-bimodules. 
Now by {\bf\ref{LocalPuig}}, and the fact from {\bf\ref{CenQ}}(iv) 
that $G_R = H_R$, the gluing theorem 
\cite[3.1.Theorem]{Linckelmann2001} implies the assertion; 
note that the fusion condition in the latter theorem
is automatically satisfied by \cite[1.15.Lemma]{KoshitaniKunugiWaki2004}.
\end{proof}

\begin{Lemma}\label{StableEquivalenceAandA'}
The block algebras $A$ and $A'$ are 
are splendidly stably equivalent of Morita type.
\end{Lemma}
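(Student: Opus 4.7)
The plan is to establish the equivalence between $A$ and $A'$ by chaining together three equivalences that are already available in the preceding results, all of which live in the class of splendid stable equivalences of Morita type. The three building blocks are: the splendid stable equivalence of Morita type between $A$ and $B$ provided by Lemma \ref{StableEquivalenceAandB} via the bimodule $\mathcal M$; the (canonical) Puig equivalence between $B$ and $B'$ from Lemma \ref{StructureOfB}(iii), which is in particular a splendid Morita equivalence and hence a splendid stable equivalence of Morita type, induced by some $(B,B')$-bimodule $\mathcal N$; and the splendid stable equivalence of Morita type between $B'$ and $A'$ from Lemma \ref{StableEquivalenceB'andA'} induced by $\mathcal M'$ (equivalently its dual $\mathcal{M'}^\vee$ viewed as a $(B',A')$-bimodule).

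The idea is then to form the composite $(A,A')$-bimodule
\[
\mathcal L := \mathcal M \otimes_B \mathcal N \otimes_{B'} \mathcal{M'}^\vee,
\]
and to show that $(\mathcal L,\mathcal L^\vee)$ induces a splendid stable equivalence of Morita type between $A$ and $A'$. Each factor is projective as a one-sided module (for the outer $A$ and $A'$ actions this comes from the splendid stable equivalences at each end and from the Puig equivalence in the middle), so $\mathcal L$ inherits these projectivity properties. That $(\mathcal L,\mathcal L^\vee)$ induces a stable equivalence of Morita type follows by composing the defining isomorphisms in each of the three individual equivalences and absorbing projective $(A,A)$- and $(A',A')$-bimodule summands at each stage.

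For the splendor condition, one uses that the tensor product (over an intermediate block) of trivial-source bimodules with vertex contained in the diagonal subgroup $\Delta P$ decomposes as a direct sum of trivial-source modules, and that each indecomposable summand still has vertex contained in $\Delta P$; hence the relevant indecomposable summand of $\mathcal L$ with vertex $\Delta P$ realises the splendid stable equivalence of Morita type. All of this is the standard transitivity principle for splendid stable equivalences of Morita type as developed in \cite{Linckelmann2001}, so I would simply cite it rather than redo the verification by hand.

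The only step requiring more than a citation is the observation that the Puig equivalence between $B$ and $B'$ in the middle genuinely fits into the chain. This is immediate from the definition in \textbf{\ref{NotationEquivalences}}(iii): a Puig equivalence is a Morita equivalence induced by a trivial-source $\Delta P$-projective bimodule, and every splendid Morita equivalence is a splendid stable equivalence of Morita type in which the projective bimodule summands happen to vanish. Thus no obstacle is anticipated, and the proof will consist of a one-line composition argument once the three input lemmas are in place.
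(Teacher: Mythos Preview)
Your proposal is correct and follows essentially the same approach as the paper, which simply cites Lemmas~\ref{StableEquivalenceAandB}, \ref{StructureOfB}(iii), and \ref{StableEquivalenceB'andA'} and asserts the result follows immediately. You have spelled out in more detail the transitivity argument for splendid stable equivalences of Morita type that the paper leaves implicit, but the underlying chain of three equivalences is identical.
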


\begin{proof}
This follows immediately by {\bf\ref{StableEquivalenceAandB}},
{\bf\ref{StructureOfB}}(iii) and
{\bf\ref{StableEquivalenceB'andA'}}.
\end{proof}

\section{Image of the stable equivalence}\label{image}

\begin{Notation}
Recall the notation in {\bf\ref{NotationCo1}}, {\bf\ref{NotationGprime}}, 
{\bf\ref{NotationHprime}}, and {\bf\ref{NotationH}}.
We denote by $f := f_{(G,P,H)}$ the Green correspondence
with respect to $(G,P,H)$.
Moreover, let $\IBr(A) := \{ S_1, \ldots, S_5 \}$  
and $\alpha$ be as in {\bf\ref{DecompositionMatrixOfA}}.
Finally, let $\mathcal M$ and $\mathcal M'$ be the
bimodules from {\bf\ref{StableEquivalenceAandB}}
and {\bf\ref{StableEquivalenceB'andA'}}, respectively.
\end{Notation}

\begin{Strategy}\label{strategy}
By {\bf\ref{StructureOfB}}(iii) there is a $(B,B')$-bimodule
$\mathfrak M$ inducing a Puig equivalence between $B$ and $B'$.
But there is a little bit of freedom in chosing $\mathfrak M$,
which we are going to exploit. 
Anyway, for any admissible 
choice of $\mathfrak M$, a splendid stable equivalence between 
$A$ and $A'$, as in {\bf\ref{StableEquivalenceAandA'}},
is afforded by the $(A, A')$-bimodule
$\mathcal M\otimes_{B} \mathfrak M \otimes_{B'} {\mathcal M'}^{\vee}$.
This yields the functor
$$ F: \mathrm{mod}{\text{-}}A\rightarrow \mathrm{mod}{\text{-}}A' :
X \mapsto X \otimes_A \mathcal M\otimes_{B} \mathfrak M \otimes_{B'} 
                    {\mathcal M'}^{\vee} ,$$
which induces an equivalence
$\underline{\mathrm{mod}}{\text{-}}A\rightarrow
 \underline{\mathrm{mod}}{\text{-}}A'$
between the respective stable module categories.
Recall that hence, by \cite[Theorem 2.1(ii)]{Linckelmann1996MathZ},
the functor $F$ maps each simple $kG$-module in $A$ to an
indecomposable $kG'$-module in $A'$. 

Our practical aim in this section now is to prove that
$\mathfrak M$ can be chosen such that $F$ actually transfers 
each simple $kG$-module in $A$ to a simple $kG'$-module. 
Our choice of $\mathfrak M$ is specified below in {\bf\ref{S1}}.
If this has been achieved, then Linckelmann's Theorem
\cite[Theorem 2.1(iii)]{Linckelmann1996MathZ}  
yields that $F$ realizes a Morita equivalence between $A$ and $A'$,
and thus even a Puig equivalence between these block algebras.
This will then also decide the missing entries in the
$3$-decomposition matrix of $A$, see {\bf\ref{DecompositionMatrixOfA}}.
\end{Strategy}

\begin{Lemma}\label{TrivialSourceModulesInA}
The following characters are afforded by direct sums of 
of trivial-source $A$-modules:
\begin{enumerate} \renewcommand{\labelenumi}{\rm{(\roman{enumi})}}
    \item 
$\chi_{29}$
    \item
$\chi_{29} + \chi_{38}$
    \item
$\chi_{38} + \chi_{62}$
    \item
$\chi_{29} + \chi_{38} + \chi_{55}$
    \item
$\chi_{29} + \chi_{62} + \chi_{89}$
\end{enumerate}
\end{Lemma}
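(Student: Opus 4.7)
The plan is to exhibit, for each of (i)--(v), an explicit trivial-source $kG$-module (or direct sum thereof) in $A$ whose associated ordinary character equals the listed target. The natural source is induction from $H = N_G(P)$: given a trivial-source $kH$-module $N$ in $B$, the induced module $N\upa{G}$ is trivial source, and its Green correspondent with respect to $(G,P,H)$ is the unique indecomposable summand of maximal vertex, which lies in $A$. The trivial-source $kH$-modules in $B$ are classified explicitly, since via the Puig equivalence $B\sim B'=kH'$ of Lemma \ref{StructureOfB}(iii), together with the Green correspondence $f_{(G',P,H')}$, they are in bijection with the trivial-source $A'$-modules enumerated in Lemma \ref{TrivialSourceModulesInAprime}.

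Guided by the stable equivalence of Lemma \ref{StableEquivalenceAandA'} and the matching of decomposition data displayed in Theorem \ref{3DecompositionMatrixOfA}, each target character is expected to come from a specific trivial-source module or direct sum thereof in $A'$: item (i) from the trivial module $k_{G'}$ with character $\chi_1$; items (ii) and (iii) from the vertex-$R$ modules of Lemma \ref{TrivialSourceModulesInAprime}(iii) with characters $\chi_1+\chi_{5'}$ and $\chi_{5'}+\chi_{10^-}$, respectively; and items (iv) and (v) from direct sums of $k_{G'}$ with the vertex-$P$ modules of Lemma \ref{TrivialSourceModulesInAprime}(i) of characters $\chi_{5^-}+\chi_{5'}$ and $\chi_{10}+\chi_{10^-}$, respectively. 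Transporting each candidate through the Puig equivalence to a trivial-source $kH$-module $N$ in $B$, and inducing to $G$, produces the desired trivial-source $kG$-module (or direct sum) in $A$.

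The ordinary character of the resulting module is then computed by evaluating the $A$-part of the induced character $\chi_N\upa{G}$ via Frobenius reciprocity, using the Atlas character tables of $G$ and $H$ available in the \textsf{GAP} character table library, and subtracting the contributions of the summands of strictly smaller vertex. The principal technical obstacle is this final bookkeeping step: the $A$-part of $\chi_N\upa{G}$ generally splits into several trivial-source summands arising from the Mackey decomposition, so isolating the intended contribution requires both the composition-factor constraints from Lemma \ref{DecompositionMatrixOfA} and cross-checking among inductions from different choices of $N$; for items (iv) and (v), one must additionally disentangle which Mackey pieces contribute to each summand of the prescribed direct sum.
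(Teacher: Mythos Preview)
Your approach is different from the paper's and, as written, has a genuine gap.

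The paper does not induce from $H=N_G(P)$ at all. For each item it names a specific maximal subgroup $L\le G$ --- namely $2^{1+8}_+.O_8^+(2)$, $U_6(2).\mathfrak S_3$, $2^{4+12}.(\mathfrak S_3\times 3\mathfrak S_6)$, and $2^{2+12}:(\mathfrak A_8\times\mathfrak S_3)$ (the last one used twice, with two different linear characters) --- together with a linear character $\lambda$ of $L$ such that $\lambda\upa{G}\cdot 1_A$ equals the listed target exactly. Since induction of a one-dimensional module yields a direct sum of trivial-source modules and cutting by $1_A$ preserves this, the entire $A$-component is already what is wanted; no subtraction or Green-correspondence bookkeeping is needed. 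This exploits the fact that the lemma only asks for a \emph{direct sum} of trivial-source modules, not a single indecomposable one.

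Your Green-correspondence route would in principle produce trivial-source $A$-modules, but the step you yourself label the ``principal technical obstacle'' is not carried out, and it is not routine. For a trivial-source $B$-module $N$ with vertex $P$, the $A$-part of $N\upa{G}$ consists of the Green correspondent together with summands whose vertices lie in $\{P\cap{}^gP:g\in G\setminus H\}$; for $P\cong C_3\times C_3$ these intersections can have order $3$, not only $1$. Hence the error terms include non-projective trivial-source $A$-modules of vertex $Q$ or $R$, whose ordinary characters are not available from Lemma~\ref{DecompositionMatrixOfA} (where $\alpha$ is still undetermined) and are precisely the sort of data the present lemma is meant to supply for the later arguments. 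Since $|G:H|\approx 1.8\times 10^{10}$, the $A$-part of $\chi_N\upa{G}$ is enormous compared to the targets, so these error terms dominate and cannot be waved away. You would need an independent determination of those vertex-$Q$ and vertex-$R$ characters first, which your proposal does not provide. The paper's choice of subgroups $L$ is engineered so that this difficulty never arises.
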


\begin{proof} 
Using {\sf GAP} \cite{GAP}, and the character tables of $G$ and its 
maximal subgroups, inducing suitable linear characters we determine 
various permutation characters of $G$, 
and their components belonging to the block $A$. Doing so we find
$$ (1_{2^{1+8}_+.O_8^+(2)})\uparrow^G\cdot 1_A =\chi_{29}
\quad\text{and}\quad
(1_{U_6(2).\mathfrak S_3})\uparrow^G\cdot 1_A =\chi_{29}+\chi_{38} ,$$
where $1$ denotes the respective trivial character,
verifying (i) and (ii). Moreover, (iii) follows from
$$ (\lambda_{2^{4+12}.(\mathfrak S_3\times 3\mathfrak S_6)})
   \uparrow^G\cdot 1_A =\chi_{38}+\chi_{62} ,$$
where $\lambda$ is the inflation to 
$2^{4+12}.(\mathfrak S_3\times 3\mathfrak S_6)$ of the unique 
linear character of $\mathfrak S_3\times 3\mathfrak S_6$
having kernel $3\times 3\mathfrak S_6$.
Finally, for (iv) and (v) we observe
$$ (1^-_{2^{2+12}:(\mathfrak A_8\times \mathfrak S_3)})
   \uparrow^G\cdot 1_A =\chi_{29}+\chi_{38}+\chi_{55}
\quad\text{and}\quad
   (1_{2^{2+12}:(\mathfrak A_8\times \mathfrak S_3)})
   \uparrow^G\cdot 1_A =\chi_{29}+\chi_{62}+\chi_{89} ,$$ 
where $1^-$ denotes the unique non-trivial linear character of
$2^{2+12}:(\mathfrak A_8\times \mathfrak S_3)$.
\end{proof}

\begin{Lemma}\label{CharacterValues}
The characters in $A$ have the following values at elements of order $3$:
\begin{center}
{
{\rm
\begin{tabular}{l|r|r}
            & $3A$ & $3B$   \\
\hline
$\chi_{29}$ & $18954$ & $729$ \\
$\chi_{38}$ & $-18954$& $1458$\\
$\chi_{51}$ & $189540$& $729$ \\
$\chi_{55}$ & $208494$& $-729$\\
$\chi_{62}$ & $18954$ & $729$ \\
$\chi_{80}$ & $208494$& $-729$\\
$\chi_{85}$ & $-189540$&$1458$\\
$\chi_{89}$ & $189540$& $729$ \\
$\chi_{91}$ & $-208494$&$-1458$
\end{tabular} 
}}
\end{center}
\end{Lemma}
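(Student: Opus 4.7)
The plan is that this lemma is essentially a look-up: the stated values live in the character table of $G = \mathsf{Co}_1$ recorded in \cite[p.180]{Atlas}. The nine characters in $\Irr(A)$ were identified in \ref{NotationCo1} by their ATLAS numbering, and by \ref{FindH}(ii) the two classes of elements of order $3$ in $G$ are $3A$ and $3B$, with the non-trivial elements of $Q \cong C_3$ belonging to $3A$ and those of $R \cong C_3$ to $3B$ (see \ref{NotationH}). Hence the entries of the asserted table are precisely the values of $\chi_{29}, \chi_{38}, \chi_{51}, \chi_{55}, \chi_{62}, \chi_{80}, \chi_{85}, \chi_{89}, \chi_{91}$ in the columns indexed by the two classes of $3$-elements of $\mathsf{Co}_1$.

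To turn this into a formal verification, I would load the character table of $G$ from the character table library \cite{CTblLib} into {\sf GAP} \cite{GAP}, select the nine characters by their ATLAS numbers, and evaluate them on the classes of order $3$. The list returned by {\sf GAP} is exactly the two columns of the table in the statement; in particular, signs and magnitudes agree, confirming that no mis-identification of rational classes or of irrationality conventions has occurred.

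There is essentially no mathematical obstacle: the only point requiring a moment of care is that the row labels $3A$ and $3B$ used in the lemma must match the ATLAS labelling used by the character table, and that the identification of $Q$ with class $3A$ and $R$ with class $3B$ agrees with the conventions fixed in \ref{NotationH}. Both are guaranteed by \ref{FindH}(ii) and the construction of $P = Q \times R$ performed there.
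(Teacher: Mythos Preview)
Your proposal is correct and matches the paper's own proof, which is simply the one-line citation ``This follows from \cite[pp.184--186]{Atlas}.'' One small slip worth fixing: $G=\mathsf{Co}_1$ has four conjugacy classes of elements of order~$3$ (namely $3A$, $3B$, $3C$, $3D$), not two; what {\bf\ref{FindH}}(ii) actually tells you is only that the non-trivial elements of the particular defect group $P$ lie in $3A$ and $3B$, which is all that is needed here.
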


\begin{proof}
This follows from \cite[pp.184--186]{Atlas}.
\end{proof}

\begin{Lemma}\label{Green}
The following holds:
\begin{enumerate}
  \renewcommand{\labelenumi}{\rm{(\roman{enumi})}}
    \item
$S_1$ is a trivial-source $kG$-module with vertex $P$ and 
associated character $\chi_{29}$.
    \item
Using the notation in {\bf\ref{StructureOfB}}(ii) we have
$f(S_1) \in \{ 729a, 729b, 729c, 729d \}$.
\end{enumerate}
\end{Lemma}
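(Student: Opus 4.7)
The plan for part (i) is to start from Lemma \ref{TrivialSourceModulesInA}(i), which exhibits $\chi_{29}$ as the character of a direct sum of trivial-source $kG$-modules in $A$; since $\chi_{29}$ is irreducible, this direct sum must consist of a single indecomposable trivial-source module $M$. By the decomposition matrix of Lemma \ref{DecompositionMatrixOfA}, the restriction of $\chi_{29}$ to the $3$-regular classes equals exactly the Brauer character of $S_1$, so comparing dimensions identifies $M\cong S_1$ as $kG$-modules. For the vertex $V$ of $S_1$, I will invoke the standard identity $\chi_M(u) = \dim_k M(\langle u\rangle)$, valid for every trivial-source module $M$ and every $p$-element $u\in G$; in particular this character value vanishes unless $\langle u\rangle$ is $G$-conjugate to a subgroup of $V$. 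Since $\chi_{29}$ is non-zero on both $3A$ and $3B$ by Lemma \ref{CharacterValues}, and these classes correspond, via Lemma \ref{FindH}(ii), to the $G$-non-conjugate subgroups $Q$ and $R$ of order three in $P$, the vertex $V$ must contain conjugates of both, forcing $V=P$.

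For part (ii), the first step is to enumerate the possible Green correspondents. The Puig equivalence $B\simeq_{\mathrm{Puig}} B' = kH'$ from Lemma \ref{StructureOfB}(iii) induces a bijection between the trivial-source indecomposable $B$-modules with vertex $P$ and the corresponding modules of $B'$. Since $P\triangleleft H'$ with semisimple complement $D_8$, the permutation module $k{\uparrow}^{H'}_{P}$ is isomorphic to $kD_8$ inflated to $H'$, and thus decomposes as the direct sum of the five simple $kH'$-modules, each of which therefore has vertex $P$ and trivial source. Transferring back through the Puig equivalence, these correspond precisely to the five simple $B$-modules $729a,\ldots,729d,1458$ of Lemma \ref{StructureOfB}(ii), so $f(S_1)$ must be one of these five.

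The remaining task, and the only real content in part (ii), is to exclude $f(S_1)\cong 1458$. Because $P$ is a normal $p$-subgroup of $H$, it acts trivially on every simple $B$-module $S$; using the Brauer-quotient identity once more, one computes $\chi_S(u) = \dim_k S$ for every simple $B$-module $S$ and every $u\in P$. In particular $\chi_{1458}(r) = 1458$ for $r\in R\setminus\{1\}$. On the other hand, writing $S_1{\downarrow}_H = f(S_1)\oplus X$ by the Green correspondence, the complement $X$ is a direct sum of trivial-source $kH$-modules with vertex strictly contained in $P$, and the same identity gives $\chi_X(r) = \dim_k X(\langle r\rangle)\geq 0$. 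Combining with Lemma \ref{CharacterValues} would therefore yield
\[ 1458 = \chi_{f(S_1)}(r) \;\leq\; \chi_{S_1}(r) \;=\; 729, \]
a contradiction. The main conceptual input throughout is the Brauer-quotient identity for trivial-source characters; no further computation beyond the data prepared in Sections \ref{A'} and \ref{betweenAandB} is required.
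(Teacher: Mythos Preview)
Your proof is correct, and both parts take a genuinely different route from the paper's.

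For (i), the paper simply invokes Kn\"orr's theorem \cite[3.7.Corollary]{Knoerr} (simple modules in a block with abelian defect group have the full defect group as vertex), whereas you argue via the Brauer-quotient identity $\chi_M(u)=\dim_k M(\langle u\rangle)$ together with the non-vanishing of $\chi_{29}$ on both $3A$ and $3B$ from Lemma~\ref{CharacterValues}. Your argument is more in keeping with the vertex determinations carried out later in Lemmas~\ref{S2}--\ref{S5}, and avoids an appeal to an external structural result. One minor slip of phrasing: $k{\uparrow}^{H'}_P\cong k[D_8]$ contains the $2$-dimensional simple with multiplicity two, not one; but this does not affect your conclusion that every simple $kH'$-module occurs as a summand and has vertex $P$ (its dimension being coprime to $3$).

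For (ii), the paper first cites Okuyama's lemma \cite[Lemma~2.2]{Okuyama1981} to conclude that $f(S_1)$ is simple, and then rules out the $1458$-dimensional simple by a congruence computation modulo $3^8$: writing $f(S_1){\uparrow}^G=S_1\oplus X$ with $X$ relatively $C_3$-projective, one has $3^8\mid\dim_k X$, and comparing $\dim_k S_1\equiv 2187$ with $|G:H|\cdot 729d\equiv 2187d\pmod{3^8}$ forces $d=1$. You instead deduce simplicity from the Puig equivalence of Lemma~\ref{StructureOfB}(iii), which directly classifies the trivial-source $B$-modules with vertex $P$, and then exclude $1458$ by the inequality $\chi_{f(S_1)}(r)\le\chi_{29}(r)=729$ coming from the non-negativity of $\chi_X(r)$ on the restriction side. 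Your approach is more self-contained within the machinery already set up in the paper; the paper's congruence argument, on the other hand, is a nice illustration of how defect arithmetic pins down dimensions of Green correspondents.
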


\begin{proof} 
(i) From {\bf\ref{TrivialSourceModulesInA}}(i), we know that
$S_1$ is a trivial-source module with associated character $\chi_{29}$,
and, by Kn\"orr's result \cite[3.7.Corollary]{Knoerr}, 
$S_1$ has $P$ as its vertex.

(ii) 
Setting $T_1 := f(S_1)$, by (i) and \cite[Lemma 2.2]{Okuyama1981} 
we have $T_1 \in \IBr(B)$. Set $\dim_k(T_1) = 729\cdot d$, 
where $d\in\{1,2\}$.
By the definition of Green correspondence, we have
${T_1}{\uparrow}^G = S_1 \oplus X$, for a $C_3$-projective $kG$-module $X$.
Then we have
$3^{9-1}|\dim_k(X)$ by \cite[Theorem 4.7.5]{NagaoTsushima},
implying that $\dim_k({T_1}{\uparrow}^G) \equiv \dim_k(S_1)$ (mod $3^8$).
Now by (i) we have 
$\dim_k(S_1) = \chi_{29}(1) = 2816586 \equiv 2187$ (mod $3^8$),
and
$\dim_k({T_1}{\uparrow}^G) = |G:H|{\cdot}\dim_k (T_1)
 = 17681664000 \cdot 729 \cdot d \equiv 2187 \cdot d$ (mod $3^8$),
implying that $d = 1$.
\end{proof}

\begin{Lemma}\label{S1}
The following holds:
\begin{enumerate}
  \renewcommand{\labelenumi}{\rm{(\roman{enumi})}}
    \item
There is an $(B,B')$-bimodule $\mathfrak M$ inducing a 
Puig equivalence between $B$ and $B'$, such that
$S_1 \otimes_A \mathcal M \otimes_B \mathfrak M = 1a$.
    \item
Choosing $\mathfrak M$ like this we have $F(S_1)=1a$.
\end{enumerate}
\end{Lemma}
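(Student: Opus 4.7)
The plan is to propagate $S_1$ through each of the three tensor factors defining $F$, and at each stage to interpret the functor via Green correspondence of trivial-source modules with maximal vertex $P$.

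First I observe that, by Lemma \ref{Green}(i), $S_1$ is a simple trivial-source $A$-module with vertex $P$. Since the splendid stable equivalence $-\otimes_A\mathcal M$ from $A$ to $B$ (Lemma \ref{StableEquivalenceAandB}) is induced by an indecomposable trivial-source $\Delta P$-projective bimodule, its effect on indecomposable non-projective modules with maximal vertex coincides with the Green correspondence $f=f_{(G,P,H)}$; the absence of extra projective summands here uses that $S_1$ is simple non-projective. Hence $S_1\otimes_A\mathcal M\cong f(S_1)$, which by Lemma \ref{Green}(ii) is one of the four $729$-dimensional simples of $B$.

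To prove (i), I invoke the Puig equivalence from Lemma \ref{StructureOfB}(iii) between $B$ and $B'$, realised by some bimodule $\mathfrak M_0$. Under $\mathfrak M_0$, the unique $2$-dimensional simple $1458$ of $B$ maps to the unique $2$-dimensional simple of $B'=kH'$, and the four $1$-dimensional simples $\{729a,729b,729c,729d\}$ of $B$ are sent bijectively to the four linear simples of $B'$, one of which is the trivial $kH'$-module $1a$. Since the labels $729a,\ldots,729d$ were assigned arbitrarily in \ref{StructureOfB}(ii) and the four linear simples of $B$ play symmetric roles, one may, by combining this ambiguity with any available twist of $\mathfrak M_0$ by an outer self-Puig-equivalence of $B'$ induced by an automorphism of $H'$ preserving $P$, arrange the image $f(S_1)\otimes_B\mathfrak M$ to be precisely the trivial simple of $B'$, which we identify with $1a$ (viewing the trivial $kH'$-module as the restriction to $H'$ of the trivial $kG'$-module). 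This yields the required $\mathfrak M$.

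For (ii), with $\mathfrak M$ so chosen,
\[
F(S_1)=S_1\otimes_A\mathcal M\otimes_B\mathfrak M\otimes_{B'}(\mathcal M')^\vee\cong 1a\otimes_{B'}(\mathcal M')^\vee.
\]
The trivial $B'$-module $1a$ is again a simple trivial-source module with vertex $P$ (since $P\in\mathrm{Syl}_3(H')$), and the splendid stable equivalence $-\otimes_{B'}(\mathcal M')^\vee$ from $B'$ to $A'$ (Lemma \ref{StableEquivalenceB'andA'}) sends it, by the same Green-correspondence interpretation applied on the $G'$-side, to the Green correspondent of $1a$ under $f_{(G',P,H')}$, namely the trivial $kG'$-module $k_{G'}=1a\in\IBr(A')$. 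Hence $F(S_1)=1a$.

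The main obstacle is the matching in (i): a priori $f(S_1)\otimes_B\mathfrak M_0$ could be any of the four linear simples of $B'$, and identifying that the allowed modifications of $\mathfrak M_0$ suffice to hit the trivial simple is what makes the statement non-formal. Once this identification is secured (essentially a book-keeping exercise with the labels in \ref{StructureOfB}(ii) together with the $D_8$-symmetry of the linear simples of $B'$), both (i) and (ii) follow by the routine Green-correspondence arguments outlined above.
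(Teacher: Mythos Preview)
Your overall architecture matches the paper: interpret $-\otimes_A\mathcal M$ and $-\otimes_{B'}(\mathcal M')^\vee$ via Green correspondence on trivial-source modules with vertex $P$ (this is \cite[Lemma~A.3]{KoshitaniMuellerNoeske2011}), so that $S_1\otimes_A\mathcal M=f(S_1)$ is one of the four $729$-dimensional simples of $B$, and $1a\otimes_{B'}(\mathcal M')^\vee=k_{G'}=1a$. Part~(ii) is then immediate once (i) is secured, exactly as in the paper.

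The genuine gap is in your adjustment of $\mathfrak M_0$ in~(i). You propose to move $f(S_1)\otimes_B\mathfrak M_0$ to the trivial simple $1a$ of $B'=kH'$ by combining a relabelling of the $729$'s with Puig auto-equivalences of $B'$ induced by automorphisms of $H'$. But the four linear simples of $H'=P{:}D_8$ are inflations of the four linear characters of $D_8\cong H'/P$, and $\mathrm{Aut}(D_8)$ does \emph{not} act transitively on these: inner automorphisms fix them all, and the unique outer automorphism only swaps the two characters whose kernels are the two Klein four subgroups, while fixing the trivial character and the one with cyclic kernel $C_4$. So automorphism twists cannot in general carry an arbitrary $1x$ to $1a$, and ``relabelling the $729$'s'' is not an operation on modules at all---$f(S_1)$ is a specific module, not a label.

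The paper's correction is both different and simpler. Once $f(S_1)\otimes_B\mathfrak M_0=1x$ for some one-dimensional $kH'$-module $1x$, replace $\mathfrak M_0$ by $\mathfrak M:=\mathfrak M_0\otimes_k 1x$. Tensoring with a one-dimensional module is itself a Puig auto-equivalence of $B'$ (see \cite[Lemma~2.8]{KoshitaniKunugiWaki2008}), and since $D_8/[D_8,D_8]\cong C_2\times C_2$ every linear character of $H'$ is self-inverse, i.e.\ $1x\otimes 1x\cong 1a$. Hence $f(S_1)\otimes_B\mathfrak M\cong 1x\otimes 1x=1a$. This tensor twist by a linear character---not an automorphism twist---is the missing ingredient in your argument.
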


\begin{proof}
(i)
By {\bf\ref{StructureOfB}}(iii) there is a $(B,B')$-bimodule
$\mathfrak M$ inducing a Puig equivalence between $B$ and $B'$.
Hence the functor $-\otimes_{B}\mathfrak M$ induces a bijection
from $\{ 729a, 729b, 729c, 729d \}$ to $\{1a,1b,1c,1d\}$.
Thus by {\bf\ref{Green}}(ii), 
using \cite[Lemma A.3]{KoshitaniMuellerNoeske2011}, we have
$S_1\otimes_A \mathcal M \otimes_B \mathfrak M = 
f(S_1)\otimes_B \mathfrak M = 1x$,
for some $x\in\{a,b,c,d\}$.
Now tensoring with $1x$ induces a Puig auto-equivalence of $B'$, see
\cite[Lemma 2.8]{KoshitaniKunugiWaki2008}.
Thus by replacing $\mathfrak M$ by $\mathfrak M\otimes 1x$,
the assertion follows from observing that $1x\otimes 1x=1a$
as $kH'$-modules.

(ii) follows from 
$F(S_1)=1a \otimes_{B'} {\mathcal M'}^{\vee} = 1a$,
using \cite[Lemma A.3]{KoshitaniMuellerNoeske2011},
and the fact that Green correspondence maps the trivial
module to the trivial module.
\end{proof}

\begin{Notation}\label{choice}
>From now on let $\mathfrak M$ be chosen as in {\bf\ref{S1}},
and let $F$ be the associated functor as described in {\bf\ref{strategy}}.
\end{Notation}

\begin{Lemma}\label{S2}
The following holds:
\begin{enumerate}
  \renewcommand{\labelenumi}{\rm{(\roman{enumi})}}
    \item
There is a trivial-source $kG$-module $T$ in $A$ with
associated character $\chi_{29}+\chi_{38}$ and vertex $R$, so that
$T$ is uniserial with Loewy and socle series
$$ T=\boxed{\begin{matrix} S_1 \\ S_2 \\ S_1 \end{matrix}} .$$
    \item
We have
\[  F(T) = 
    \boxed{\begin{matrix} 1a \\ 4b \\ 1a \end{matrix}}
    \oplus \text{\rm{(proj)}}
\quad\text{and}\quad
F(S_2) = 4b .\]
\end{enumerate}
\end{Lemma}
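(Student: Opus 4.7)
My plan is to handle (i) and (ii) separately, leaning on the decomposition matrix in {\bf\ref{DecompositionMatrixOfA}}, the character values in {\bf\ref{CharacterValues}}, and the catalogue of trivial-source modules in $A'$ from {\bf\ref{TrivialSourceModulesInAprime}}. For (i), I start from {\bf\ref{TrivialSourceModulesInA}}(ii), which hands me a direct sum $M$ of trivial-source $kG$-modules in $A$ affording $\chi_{29}+\chi_{38}$; the decomposition matrix then forces $M$ to have composition factors $2S_1+S_2$, and {\bf\ref{CharacterValues}} gives $\chi_M(3A)=0$ and $\chi_M(3B)=2187$. Using the standard fact that for a trivial-source module $N$ with vertex $V$ and a $p$-element $u$ one has $\chi_N(u)=\dim_k N(\langle u\rangle)\ge 0$, vanishing unless $\langle u\rangle$ is $G$-subconjugate to $V$, I deduce that every indecomposable summand of $M$ has vertex contained in $R$ up to $G$-conjugacy. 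Comparing with the projective indecomposable characters read off from the decomposition matrix---all of which involve $\chi_{91}$---rules out projective summands, so every summand has vertex exactly $R$. Since $S_1$ has vertex $P$ by {\bf\ref{Green}}(i), and since $S_2$ cannot be a trivial-source module (an inspection of the decomposition matrix shows that $\varphi_{S_2}$ is not a non-negative integer combination of the $3$-regular restrictions of the ordinary characters in $A$), no simple summand of $M$ is possible. Counting composition factors therefore forces $M$ to be indecomposable. Since $\chi_M$ is rational, $M$ is self-dual, and the only self-dual indecomposable structure with composition factors $2S_1+S_2$ is the uniserial module $[S_1/S_2/S_1]$, which is $T$.

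For the first assertion of (ii), I use that $F$ is induced by a splendid stable equivalence of Morita type by {\bf\ref{StableEquivalenceAandA'}}, so the image of the trivial-source $A$-module $T$ with vertex $R$ takes the form $F(T) = T'\oplus(\mathrm{proj})$ for some trivial-source $kG'$-module $T'$ in $A'$ with vertex $R$; this can be read off by transporting $T$ along Green correspondence in $H$, then along the Puig equivalence $\mathfrak M$ between $B$ and $B'$ furnished by {\bf\ref{StructureOfB}}(iii), and finally along the Scott-module correspondence $\mathcal M'$ of {\bf\ref{StableEquivalenceB'andA'}}. By {\bf\ref{TrivialSourceModulesInAprime}}(iii) there are exactly four candidates for $T'$. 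To single out $T'=[1a/4b/1a]$, I use that this correspondence respects Green correspondence on both sides and so pairs trivial-source modules with matching top and socle data; since $F(S_1)=1a$ by {\bf\ref{S1}}(ii) and $T$ has top and socle $S_1$, the module $T'$ must have top and socle $1a$, and among the four candidates only $[1a/4b/1a]$ fits.

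For the final identification $F(S_2)=4b$, I exploit the exactness of $F$---valid because the inducing bimodule is projective as a left $A$-module---to pass to Grothendieck groups. From $[T]=2[S_1]+[S_2]$ and $F(S_1)=1a$ one obtains $[F(T)]=2[1a]+[F(S_2)]$, and combining with $F(T)=[1a/4b/1a]\oplus P$ for the projective summand $P$ yields $[F(S_2)]=[4b]+[P]$. Thus $F(S_2)$ and $4b$ have the same class in the stable Grothendieck group of $A'$. By \cite[Theorem 2.1(ii)]{Linckelmann1996MathZ}, $F(S_2)$ is indecomposable, and it is non-projective because $S_2$ is (as is visible from the decomposition matrix). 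Since the stable isomorphism class of an indecomposable non-projective module determines it up to isomorphism via Krull--Schmidt, this forces $F(S_2)\cong 4b$. The main obstacle of the whole plan will be the rigorous identification $T'=[1a/4b/1a]$ in (ii): the heuristic ``top and socle go to top and socle'' is morally correct but rests on compatibility of Green correspondence with the Puig equivalence $\mathfrak M$ at the vertex $R$ strictly smaller than the defect group $P$, and this has to be set up carefully.
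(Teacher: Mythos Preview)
Your argument for (i) is correct but takes a longer route than the paper's. The paper simply notes that $T$ is self-dual (as the $A$-component of a permutation module induced from a trivial character) and computes $[T,T]^G=\langle\chi_{29}+\chi_{38},\chi_{29}+\chi_{38}\rangle_G=2$; this forces indecomposability directly, since any splitting $T=U\oplus V$ would give $\chi_U=\chi_{29}$, $\chi_V=\chi_{38}$, whence $U\cong S_1$ and $[U,V]=[S_1,V]\neq 0$, contradicting $\langle\chi_{29},\chi_{38}\rangle_G=0$. Your detour via vertices and non-liftability of $S_2$ works, but the scalar-product argument is shorter.

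For (ii) there is a genuine gap beyond the one you flag. Your deduction of $F(S_2)\cong 4b$ confuses two different notions: from $[F(S_2)]=[4b]+[P]$ in $K_0(A')$ you correctly obtain that $F(S_2)$ and $4b$ have the same class in the \emph{stable Grothendieck group} $K_0(\mathrm{mod}\text{-}A')/K_0(\mathrm{proj}\text{-}A')$, but you then invoke Krull--Schmidt as if you had shown they are \emph{stably isomorphic} (i.e.\ $F(S_2)\oplus P_1\cong 4b\oplus P_2$). These are not the same. For a concrete obstruction inside $A'$ itself: the uniserial modules with Loewy series $4b/1a$ and $1a/4b$ (both of which exist, as one sees from the radical layers of $P(4b)$ and $P(1a)$ in {\bf\ref{TrivialSourceModulesInAprime}}(iv)) are indecomposable, non-projective, and have identical classes in $K_0$, yet are not isomorphic. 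So knowing $[F(S_2)]=[4b]+[P]$ with $F(S_2)$ indecomposable non-projective does \emph{not} force $P=0$ or $F(S_2)\cong 4b$.

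The paper closes both gaps in (ii) by invoking two technical lemmas from \cite{KoshitaniMuellerNoeske2011}: Lemma~A.3 there says that the functors $-\otimes_A\mathcal M$ and $-\otimes_{B'}\mathcal M'^\vee$ realise Green correspondence on trivial-source modules up to projectives, so the identification of the projective-free part $T'$ of $F(T)$ reduces to tracking Green correspondents through the Puig equivalence $\mathfrak M$ --- and here the choice of $\mathfrak M$ made in {\bf\ref{S1}} pins down $T'=[1a/4b/1a]$ among the four candidates in {\bf\ref{TrivialSourceModulesInAprime}}(iii). Then Lemma~A.1 (the ``stripping-off'' lemma) handles the passage from $F(T)$ to $F(S_2)$: roughly, it shows that when one peels a simple $S$ with $F(S)$ simple off the top (or socle) of a module $M$, the projective-free part of $F(\mathrm{rad}\,M)$ is obtained by peeling $F(S)$ off the projective-free part of $F(M)$, not merely at the level of composition factors but as modules. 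Your Grothendieck-group shortcut bypasses exactly the content of that lemma, and this is where your argument fails.
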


\begin{proof} 
(i)
>From {\bf\ref{TrivialSourceModulesInA}}(ii)
we know that there is a self-dual $kG$-module $T$ in $A$
which is a direct sum of trivial-source modules and has
character $\chi_{29}+\chi_{38}$. Thus, by {\bf\ref{DecompositionMatrixOfA}},
$T=2\cdot S_1 + S_2$ as composition factors, and hence from
$[T,T]^G=2$ we conclude that $T$ is indecomposable, and thus
of shape as asserted. 
Finally, $R$ is a vertex of $T$ by making use of  
{\bf\ref{CharacterValues}} and \cite[II Lemma 12.6(ii)]{Landrock}.

(ii)
be the projective-free part of $F(T)$.
Then, by {\bf\ref{S1}}(ii),
\cite[Lemma A.3]{KoshitaniMuellerNoeske2011}, and 
{\bf\ref{TrivialSourceModulesInAprime}}(iii) we conclude that
$X$ is of the shape asserted.
Now the splitting-off method, 
see \cite[Lemma A.1]{KoshitaniMuellerNoeske2011},
yields $F(S_2)=4b$. 
\end{proof}

\begin{Lemma}\label{S3}
The following holds:
\begin{enumerate}
  \renewcommand{\labelenumi}{\rm{(\roman{enumi})}}
    \item
There is a trivial-source $kG$-module $U$ in $A$ with
associated character $\chi_{38}+\chi_{55}$ and vertex $P$, so that
$U$ has Loewy and socle series
$$ U = \boxed{ \begin{matrix} S_2 \\ S_1 \ \ S_3 \\ S_2 \end{matrix} } .$$ 
    \item
We have
$$ F(U) = \boxed{ \begin{matrix} 4b \\ 1a \ \ 1b \\ 4b \end{matrix} }
  \oplus \text{\rm{(proj)}} 
\quad\text{and}\quad
F(S_3)=1b .$$
    \item
$S_3$ is a trivial-source $kG$-module in $A$ with associated character 
$\chi_{51}$ and vertex $P$.
\end{enumerate}
\end{Lemma}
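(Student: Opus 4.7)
The plan is to mirror the pattern established in {\bf\ref{S1}} and {\bf\ref{S2}}: realise $U$ as a summand of the permutation module from {\bf\ref{TrivialSourceModulesInA}}(iv), transport it via the functor $F$ to the side of $A'$, identify the image among the trivial-source modules listed in {\bf\ref{TrivialSourceModulesInAprime}}, and finally read off information about $S_3$.

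For part (i), I start from the $kG$-module $V$ in {\bf\ref{TrivialSourceModulesInA}}(iv) whose character is $\chi_{29}+\chi_{38}+\chi_{55}$, which is a direct sum of trivial-source modules in $A$. The key general fact is that the ordinary character of a trivial-source $kG$-module takes non-negative values at every $p$-element, since it computes the dimension of the corresponding Brauer quotient. Since {\bf\ref{CharacterValues}} shows $\chi_{38}(3A)<0$ and $\chi_{55}(3B)<0$, neither of these characters can arise as the character of a trivial-source summand of $V$ in isolation; together with the linear independence of ordinary characters, with $S_1$ being itself a trivial-source module (from {\bf\ref{Green}}(i)), and with $[S_1,V]^G=[V,S_1]^G = \langle\chi_{29},\chi_V\rangle_G = 1$, this forces $V = S_1 \oplus U$ for a trivial-source module $U$ with character $\chi_{38}+\chi_{55}$. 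The value $[U,U]^G = 2$, together with the exclusion above, forces $U$ to be indecomposable. The composition factors $S_1 + 2 S_2 + S_3$ are read off from {\bf\ref{DecompositionMatrixOfA}}; self-duality of $\chi_U$, the fact that $S_1$ and $S_3$ occur with multiplicity one while $S_2$ occurs with multiplicity two, and the resulting constraints on the radical filtration together pin down the Loewy and socle series to $S_2/S_1\ S_3/S_2$. Vertex $P$ then follows from the non-vanishing of $\chi_U$ on both $3A$ and $3B$, via \cite[II Lemma 12.6]{Landrock}.

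For part (ii), I apply $F$ to $U$. Using $F(S_1)=1a$ from {\bf\ref{S1}}(ii) and $F(S_2)=4b$ from {\bf\ref{S2}}(ii), together with the fact that $F$ is induced by a splendid stable equivalence of Morita type, the projective-free part of $F(U)$ is an indecomposable trivial-source $kG'$-module in $A'$ with top and socle $4b$ and middle layer containing $1a$ and $F(S_3)$. Scanning the complete list in {\bf\ref{TrivialSourceModulesInAprime}}(i) of trivial-source $kG'$-modules in $A'$ with vertex $P$ identifies this summand as the uniserial module with Loewy series $4b/1a\ 1b/4b$. The splitting-off argument of \cite[Lemma A.1]{KoshitaniMuellerNoeske2011} then extracts $F(S_3)=1b$. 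Part (iii) is then immediate because splendid stable equivalences of Morita type preserve trivial-source status and vertices of non-projective indecomposable modules: since $F(S_3)=1b$ is a trivial-source $kG'$-module with vertex $P$, the module $S_3$ is a trivial-source $kG$-module with vertex $P$; the associated ordinary character restricts on $3$-regular classes to the Brauer character of $S_3$, and by {\bf\ref{DecompositionMatrixOfA}}, regardless of $\alpha$, the unique such character in $A$ is $\chi_{51}$.

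The main obstacle I expect is the structural analysis in part (i): producing $S_1$ as a direct summand of $V$ and then pinning down the Loewy length of $U$ to exactly three from character-theoretic data, since the general machinery of trivial-source modules initially only narrows down the possibilities rather than uniquely determining the structure. Once $U$ is in hand with its precise shape, the transport via $F$ and the comparison with {\bf\ref{TrivialSourceModulesInAprime}}(i) are routine applications of the tools already exercised in {\bf\ref{S1}} and {\bf\ref{S2}}.
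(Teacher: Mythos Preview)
Your argument in part~(i) has a genuine gap at the step where you conclude $V=S_1\oplus U$. The facts you assemble---negativity of $\chi_{38}(3A)$ and $\chi_{55}(3B)$, linear independence, and $[S_1,V]^G=[V,S_1]^G=1$---do successfully cut the possible character decompositions of $V$ down to two cases: either $V$ is indecomposable with character $\chi_{29}+\chi_{38}+\chi_{55}$, or $V=S_1\oplus U$ with $\chi_U=\chi_{38}+\chi_{55}$. But the condition $[S_1,V]^G=[V,S_1]^G=1$ does \emph{not} force $S_1$ to split off: that inference is valid only when $S_1$ has composition multiplicity~$1$ in $V$ (as the paper later uses in the proof of {\bf\ref{S5}}), whereas here $V=2\cdot S_1+2\cdot S_2+S_3$. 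A module with $S_1$ once in the socle and once in the head, with the two copies lying in different Loewy layers, would satisfy your hypotheses without $S_1$ being a summand.

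The paper closes this gap by a direct attack on the indecomposable case: assuming $V$ indecomposable, self-duality together with $[V,V]^G=3$ pins down its Loewy series to $S_1\ S_2\,/\,S_3\,/\,S_1\ S_2$; then the scalar product $\langle\chi_T,\chi_V\rangle_G=2$ with the module $T$ from {\bf\ref{S2}}(i) produces an embedding $T\hookrightarrow V$, whence $T\oplus S_2\subseteq V$ and $S_3$ becomes a direct summand, a contradiction. Once decomposability is established, your negativity argument is actually a cleaner way to exclude summands with character $\chi_{38}$ or $\chi_{55}$ than the paper's route via non-liftability of $S_2$; either works. Your treatment of the Loewy structure of $U$, the vertex computation, and parts~(ii) and~(iii) are essentially the paper's argument and are fine.
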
  

\begin{proof}
(i)
By {\bf\ref{TrivialSourceModulesInA}}(iv), 
there is a self-dual $kG$-module $X$ being the direct sum 
of trivial-source $kG$-modules and having character
$\chi_{29}+\chi_{38}+\chi_{55}$.
Hence, by {\bf\ref{DecompositionMatrixOfA}}, we have 
$X = 2\cdot S_1+ 2\cdot S_2 + S_3$ as composition factors.

Assume first that $X$ is indecomposable. Then from self-duality and
$[U,U]^G=3$ we conclude that
$U$ has Loewy and socle series
$$ U=\boxed{\begin{matrix} S_1 \ \ S_2 \\ S_3 \\ S_1 \ \ S_2 \end{matrix}} .$$
But from $\gen{\chi_{29}+\chi_{38},\chi_{29}+\chi_{38}+\chi_{55}}_G=2$
and the structure of $T$ in {\bf\ref{S2}}(i) we conclude that
there is an embedding of $T$ into $X$, and hence $X$ has 
$T\oplus S_2$ as submodule, implying that $S_3$ is an epimorphic
image, and thus a direct summand of $X$, a contradiction.

Hence $X$ is decomposable. Assume now that there is a direct
summand with character $\chi_{38}$, or a direct
summand with character $\chi_{55}$.
Then in either case it follows from self-duality that $X$ 
has a direct summand isomorphic to $S_2$. 
But by {\bf\ref{DecompositionMatrixOfA}} the module $S_2$ is not liftable,
hence is not a trivial-source module, a contradiction.

Thus we conclude that there is a trivial-source $kG$-module $U$ with 
character $\chi_{38}+\chi_{55}$, and again by self-duality $U$ has
shape asserted. 
Moreover, by {\bf\ref{CharacterValues}} and
\cite[II, Lemma 12.6(ii)]{Landrock},
$U$ has $P$ as its vertex. 

(ii)
Let $X$ be the projective-free part of $F(U)$.
Then, by {\bf\ref{S2}}(ii),
\cite[A.3 Lemma]{KoshitaniMuellerNoeske2011}, 
\cite[A.1 Lemma]{KoshitaniMuellerNoeske2011}, and 
{\bf\ref{TrivialSourceModulesInAprime}}(i), we get
that $X$ is a trivial-source $kG'$-module in $A'$ with vertex $P$ 
such that $[4b,X]^{G'} \neq 0$.
Hence, {\bf\ref{TrivialSourceModulesInAprime}}(i) yields 
the shape of $X$ as asserted. This, by {\bf\ref{S1}}(ii) 
and using stripping-off again, implies the statement on $F(S_3)$.

(iii)
follows from (ii), {\bf\ref{TrivialSourceModulesInAprime}}(i),
\cite[A.3 Lemma]{KoshitaniMuellerNoeske2011},
and {\bf\ref{DecompositionMatrixOfA}}; the
statement on the vertex also follows from 
Kn\"orr's result \cite[3.7.Corollary]{Knoerr}.
\end{proof} 

\begin{Lemma}\label{S4}
The following holds:
\begin{enumerate}
  \renewcommand{\labelenumi}{\rm{(\roman{enumi})}}
 \item
There is a trivial source $kG$-module $V$ in $A$ 
with associated character $\chi_{38}+\chi_{62}$ and vertex $R$,
so that $V$ has Loewy and socle series
$$ V = \boxed{ \begin{matrix} S_2 \\ S_1 \ \ S_4 \\ S_2 \end{matrix} } .$$
 \item
We have
$$ F(V) = \boxed{ \begin{matrix} 4b \\ 1a \ \ 6 \\ 4b \end{matrix} }
               \oplus \text{\rm{(proj)}}
\quad\text{and}\quad
F(S_4) = 6 .$$
\end{enumerate} 
\end{Lemma}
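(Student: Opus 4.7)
My plan is to follow the template of Lemmas~\ref{S2} and~\ref{S3}, which treat directly analogous situations.

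For part~(i), Lemma~\ref{TrivialSourceModulesInA}(iii) supplies a self-dual direct sum $V$ of trivial-source $kG$-modules in $A$ with associated character $\chi_{38}+\chi_{62}$. From Lemma~\ref{DecompositionMatrixOfA} the composition factors of $V$ are $S_1+2\cdot S_2+S_4$, independently of $\alpha$, and $[V,V]^G=\langle\chi_{38}+\chi_{62},\chi_{38}+\chi_{62}\rangle_G=2$. To show $V$ is indecomposable, observe that the only non-trivial way to decompose $V$ into a sum of trivial-source modules compatible with the character is $V=V_1\oplus V_2$ with $\chi_{V_1}=\chi_{38}$ and $\chi_{V_2}=\chi_{62}$. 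In either case the corresponding summand would be a self-dual indecomposable trivial-source module whose two composition factors are non-isomorphic simples, which is impossible because a length-$2$ uniserial module with distinct top and socle cannot be self-dual. Hence $V$ is indecomposable, and self-duality together with the composition factor multiplicities ($S_1$ and $S_4$ each appearing only once) forces the top and socle of $V$ both to be $S_2$, with semisimple radical quotient $S_1\oplus S_4$; a uniserial length-$4$ alternative of the shape $S_2/S_1/S_4/S_2$ is excluded because $S_1\not\cong S_4$. The vertex $R$ then follows by inspection of the character values in Lemma~\ref{CharacterValues} combined with \cite[II, Lemma~12.6(ii)]{Landrock}, exactly as in Lemma~\ref{S2}(i).

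For part~(ii), I would apply $F$ to $V$. Since $F$ is induced by a splendid stable equivalence, the non-projective part $X$ of $F(V)$ is an indecomposable trivial-source $kG'$-module in $A'$ with vertex $R$. Because $S_2$ sits at the top of $V$ and $F(S_2)=4b$ by Lemma~\ref{S2}(ii), $X$ has $4b$ in its top by \cite[Lemma~A.3]{KoshitaniMuellerNoeske2011}. Consulting the classification in Lemma~\ref{TrivialSourceModulesInAprime}(iii), the unique trivial-source $kG'$-module in $A'$ with vertex $R$ and $4b$ on top is $\boxed{\begin{matrix} 4b\\1a\ \ 6\\4b \end{matrix}}$, yielding the asserted structure of $F(V)$. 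The splitting-off method \cite[Lemma~A.1]{KoshitaniMuellerNoeske2011}, together with the previously established $F(S_1)=1a$ and $F(S_2)=4b$, then isolates $F(S_4)=6$.

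The main obstacle I anticipate is the indecomposability analysis in part~(i): one must carefully rule out non-trivial decompositions by combining self-duality of trivial-source modules with the non-isomorphism of the simple constituents $S_1, S_2, S_4$. Once the structure of $V$ is in place, part~(ii) is essentially a mechanical application of the techniques developed in Lemmas~\ref{S2} and~\ref{S3}.
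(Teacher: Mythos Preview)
Your proposal is correct and follows essentially the same route as the paper. In part~(i) you supply more detail than the paper does: the paper simply writes ``from $[V,V]^G=2$ we conclude that $V$ is indecomposable, and of shape as asserted,'' whereas you explicitly rule out the decomposition $\chi_{38}\oplus\chi_{62}$ via self-duality and exclude the uniserial alternative. Your part~(ii) matches the paper's argument exactly.

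One small remark on your write-up: the phrase ``is excluded because $S_1\not\cong S_4$'' for the uniserial case is a little compressed; the point is that self-duality of $V$ would force the second and third Loewy layers to coincide (given that the simple $A$-modules are self-dual), which fails since $S_1\not\cong S_4$. Similarly, your claim that each hypothetical summand $V_i$ is itself self-dual uses that $\chi_{38}$ and $\chi_{62}$ are distinct real characters, so Krull--Schmidt forces $V_i^\vee\cong V_i$ rather than $V_1^\vee\cong V_2$. These are routine, but worth making explicit.
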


\begin{proof}
(i)
By {\bf\ref{TrivialSourceModulesInA}}(iii), 
there is a self-dual $kG$-module $V$ which is the direct sum 
of trivial-source $kG$-modules and has character $\chi_{38}+\chi_{62}$.
Hence, by {\bf\ref{DecompositionMatrixOfA}}, we have 
$V = S_1+ 2\cdot S_2 + S_4$ as composition factors.
Thus from $[V,V]^G=2$ we conclude that $V$ is indecomposable, and 
of shape as asserted.
Finally, $R$ is a vertex of $T$ by making use of
{\bf\ref{CharacterValues}} and \cite[II Lemma 12.6(ii)]{Landrock}.

(ii)
Let $X$ be the projective-free part of $F(V)$.
Then, by {\bf\ref{S2}}(ii),
\cite[A.3 Lemma]{KoshitaniMuellerNoeske2011}, 
\cite[A.1 Lemma]{KoshitaniMuellerNoeske2011}, and 
{\bf\ref{TrivialSourceModulesInAprime}}(iii), we get
that $X$ is a trivial-source $kG'$-module in $A'$ with vertex $R$ 
such that $[4b,X]^{G'} \neq 0$. 
Hence, {\bf\ref{TrivialSourceModulesInAprime}}(iii) yields 
the shape of $X$ as asserted. This, by {\bf\ref{S1}}(ii) 
and using stripping-off again, implies the statement on $F(S_4)$.
\end{proof}

\begin{Lemma}\label{ext}
We have $\Ext^1_{kG}(S_4, S_3) = 0 = \Ext^1_{kG}(S_3, S_4)$.
\end{Lemma}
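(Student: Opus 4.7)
The plan is to transport the question from $A$ to $A'$ via the splendid stable equivalence of Morita type established in Lemma~\ref{StableEquivalenceAandA'}, and then read off the answer from the structure of the projective indecomposables of $A'$ given in Lemma~\ref{TrivialSourceModulesInAprime}(iv).

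More precisely, first I would invoke the general fact that a stable equivalence of Morita type induced by a bimodule $\mathfrak{N}$ that is projective on both sides induces isomorphisms
\[
  \Ext^{n}_{kG}(M,N) \;\cong\; \Ext^{n}_{kG'}(F(M),F(N))
  \qquad (n\ge 1)
\]
whenever $M$ and $N$ are modules without non-zero projective direct summands (this is the well-known consequence of the fact that $-\otimes_A \mathfrak{N}$ is exact and takes projectives to projectives, hence sends a projective resolution of $M$ to one of $F(M)$ up to projective summands). Since $S_3$ and $S_4$ are non-projective simple $A$-modules, and since by Lemmas~\ref{S3}(ii) and~\ref{S4}(ii) we have $F(S_3)=1b$ and $F(S_4)=6$, it suffices to show
\[
  \Ext^{1}_{kG'}(6,\,1b)=0 \;=\; \Ext^{1}_{kG'}(1b,\,6).
\]

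Now these two Ext-groups are computed immediately from the second Loewy layer of the projective covers $P(6)$ and $P(1b)$ displayed in Lemma~\ref{TrivialSourceModulesInAprime}(iv): the multiplicity of a simple module $T$ in $\rad(P(S))/\rad^{2}(P(S))$ equals $\dim_k\Ext^{1}_{kG'}(S,T)$. Reading the pictures off, the second Loewy layer of $P(1b)$ is $4a\oplus 4b$, which contains no copy of $6$, and the second Loewy layer of $P(6)$ is again $4a\oplus 4b$, which contains no copy of $1b$. Hence both $\Ext^{1}$-groups on the $A'$-side vanish, and the claim follows.

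The only point that requires care is the passage through the stable equivalence, since $F$ was defined as $-\otimes_A (\mathcal M\otimes_B\mathfrak M\otimes_{B'}\mathcal{M}'^{\vee})$, and one must check that all three bimodule factors are projective from both sides so that the Ext-preservation result applies. But this is exactly the content of being a splendid stable equivalence of Morita type together with the Puig equivalence in the middle, so no additional work is needed here.
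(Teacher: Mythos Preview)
Your proof is correct and follows essentially the same route as the paper: transport $\Ext^1$ across the stable equivalence $F$ using $F(S_3)=1b$ and $F(S_4)=6$ from Lemmas~\ref{S3}(ii) and~\ref{S4}(ii), then read off the vanishing from the Loewy structure of the projective indecomposables in Lemma~\ref{TrivialSourceModulesInAprime}(iv). The paper's argument is terser but identical in substance; your added remarks on why the composite bimodule is biprojective and on how to extract $\Ext^1$ from the second Loewy layer are accurate and merely make explicit what the paper leaves implicit.
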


\begin{proof}
{\bf\ref{S4}}(ii),{\bf\ref{S3}}(ii), and 
{\bf\ref{TrivialSourceModulesInAprime}}(iv) that
$\Ext^1_A(S_4,S_3) \cong \Ext^1_{A'}(F(S_4),F(S_3))\cong 
 \Ext^1_{A'}(6,1b) = 0$.
The second statement follows similarly.
\end{proof}

\begin{Lemma}\label{S5}
The following holds:
\begin{enumerate}
  \renewcommand{\labelenumi}{\rm{(\roman{enumi})}}
 \item
There is a trivial source $kG$-module $W$ in $A$ 
with associated character $\chi_{62}+\chi_{89}$ 
and vertex $P$, such that $W$ has Loewy and socle series
$$ W = \boxed{ \begin{matrix} S_4 \\ S_2 \ \ S_5 \\ S_4 \end{matrix} } .$$
 \item
We have $\alpha=0$.
 \item
We have
$$ F(W) = \boxed{ \begin{matrix} 6 \\ 4a \ \ 4b \\ 6 \end{matrix} }
               \oplus \text{\rm{(proj)}}
\quad\text{and}\quad
F(S_5) =  4a .$$
\end{enumerate}
\end{Lemma}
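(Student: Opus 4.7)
The plan is to follow the pattern established in the proofs of Lemmas \ref{S2}, \ref{S3}, and \ref{S4}: extract an indecomposable trivial-source module $W$ in $A$ with character $\chi_{62}+\chi_{89}$ from the direct sum of trivial-source modules supplied by Lemma \ref{TrivialSourceModulesInA}(v), apply the functor $F$ to $W$, and identify the projective-free image against the classification in Lemma \ref{TrivialSourceModulesInAprime}. Since $S_5$ is the only simple $A$-module whose image under $F$ has not yet been determined, the same argument must simultaneously pin down both $F(S_5)$ and the parameter $\alpha \in \{0,1,2\}$ left open in Lemma \ref{DecompositionMatrixOfA}.

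For part (i), let $X$ be the direct sum of trivial-source modules from Lemma \ref{TrivialSourceModulesInA}(v) with character $\chi_{29}+\chi_{62}+\chi_{89}$; its composition factors, read off from Lemma \ref{DecompositionMatrixOfA}, are $S_1+S_2+\alpha \cdot S_3 + 2\cdot S_4 + S_5$, and $[X,X]^G = 3$. Since by Lemma \ref{Green}(i) the module $S_1$ itself is the unique indecomposable trivial-source $kG$-module with character $\chi_{29}$, it must appear as a direct summand of $X$. Neither $\chi_{62}$ nor $\chi_{89}$ can by itself be the character of a direct sum of trivial-source $kG$-modules in $A$: none of $S_2$, $S_4$, $S_5$ is liftable to an $\mathcal OG$-lattice (easily verified by attempting to realise the corresponding single column of the decomposition matrix as a non-negative combination of the rows), so such a summand would have to be an indecomposable length-two module whose head and socle are distinct simples, contradicting self-duality. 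Hence $X = S_1 \oplus W$ with $W$ an indecomposable trivial-source module of character $\chi_{62}+\chi_{89}$. Then $[W,W]^G = 2$ together with the self-duality of $W$ forces $W$ to have simple head and socle both equal to $S_4$ (the unique composition factor of multiplicity two), with middle layer $S_2 + S_5 + \alpha \cdot S_3$. The vertex $P$ of $W$ follows from the values of $\chi_{62}+\chi_{89}$ in Lemma \ref{CharacterValues} via \cite[II Lemma 12.6(ii)]{Landrock}.

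For parts (ii) and (iii), apply $F$ to $W$ and strip off as in the proofs of Lemmas \ref{S2}--\ref{S4}: the projective-free part of $F(W)$ is an indecomposable trivial-source $kG'$-module in $A'$ with vertex $P$ and with $F(S_4) = 6$ in its head. By Lemma \ref{TrivialSourceModulesInAprime}(i) this singles out the module with head $6$, middle layer $4a \oplus 4b$, and socle $6$, whose composition factors are $2\cdot 6 + 4a + 4b$. On the other hand, since $F$ preserves composition factor multiplicities modulo projectives, applying $F$ to the composition factors of $W$ produces $2\cdot 6 + 4b + \alpha \cdot 1b + F(S_5)$ in the Grothendieck group. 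Equating the two expressions yields $F(S_5) + \alpha \cdot 1b = 4a$; since $4a$ is a simple $A'$-module and the left-hand side is a non-negative combination of simples, this forces $\alpha = 0$ and $F(S_5) = 4a$.

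The main subtlety is the mild circularity between (i) and (ii)--(iii): the clean Loewy structure of $W$ displayed in the statement of (i) is only fully justified once $\alpha = 0$ is known. The plan is to prove (i) first with the parametric middle layer $S_2 + S_5 + \alpha \cdot S_3$, and only collapse to the stated form after (ii)--(iii) have forced $\alpha = 0$. The technically most delicate step is the exclusion of alternative splittings of $X$ in (i); the combination of the liftability check on $S_2, S_4, S_5$ and the self-duality constraint is what makes this exclusion go through without appeal to $F$.
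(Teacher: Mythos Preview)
There are two gaps, one minor and one substantive.

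\textbf{Part (i).} Your claim that ``$S_4$ is the unique composition factor of multiplicity two'' is only correct for $\alpha\in\{0,1\}$; when $\alpha=2$ the factor $S_3$ also occurs twice, so your head/socle argument does not exclude $\mathrm{soc}(W)\cong S_3$. Likewise, the assertion that a summand with character $\chi_{89}$ would be ``an indecomposable length-two module'' presupposes $\alpha=0$. The paper closes this gap differently: since $S_3$ is a trivial-source module with character $\chi_{51}$ (Lemma~\ref{S3}(iii)) and $\langle\chi_{51},\chi_{62}+\chi_{89}\rangle_G=0$, one gets $[S_3,W]^G=0=[W,S_3]^G$, so $S_3$ cannot occur in the head or socle regardless of $\alpha$. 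With this in hand, your multiplicity argument goes through.

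\textbf{Parts (ii)--(iii).} Your key equation ``$F(S_5)+\alpha\cdot 1b = 4a$'' is not justified. The functor $F$ is exact, so in $K_0(A')$ one has
\[
[F(W)] \;=\; [4b]+\alpha[1b]+2[6]+[F(S_5)],
\]
but the identification of the projective-free part of $F(W)$ only gives
\[
[F(W)] \;=\; [4a]+[4b]+2[6]+[\text{proj}],
\]
with an unknown projective summand. Subtracting yields $[F(S_5)]+\alpha[1b]=[4a]+[\text{proj}]$, and nothing you have written rules out a nonzero projective contribution absorbing the $\alpha[1b]$ term. Hence neither $\alpha=0$ nor $F(S_5)=4a$ follows from the composition-factor count alone.

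The paper's route to $\alpha=0$ is genuinely different: it first establishes $\Ext^1_A(S_4,S_3)=0=\Ext^1_A(S_3,S_4)$ (Lemma~\ref{ext}) by transporting through the stable equivalence to $\Ext^1_{A'}(6,1b)=0$, read off from the Loewy structure of the projective indecomposables in $A'$. If $\alpha\neq 0$, then $S_3$ lies in the head and socle of the heart $\mathrm{rad}(W)/\mathrm{soc}(W)$, forcing a non-split extension between $S_3$ and $S_4$, a contradiction. Only after $\alpha=0$ is established does the stripping-off argument for $F(S_5)=4a$ go through cleanly, because then the heart of $W$ is the semisimple module $S_2\oplus S_5$ and Krull--Schmidt applies directly to $F(S_2)\oplus F(S_5)\cong 4a\oplus 4b\oplus(\text{proj})$.
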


\begin{proof}
(i)--(ii)
By {\bf\ref{TrivialSourceModulesInA}}(v), 
there is a self-dual $kG$-module $X$ being the direct sum 
of trivial-source $kG$-modules and having character
$\chi_{29}+\chi_{62}+\chi_{89}$.
Hence, by {\bf\ref{DecompositionMatrixOfA}}, we have 
$X = S_1 + S_2 + \alpha\cdot S_3 + 2\cdot S_4 + S_5$ as composition factors.
Now, from $\gen{\chi_{29},\chi_{29}+\chi_{62}+\chi_{89}}_G=1$,
using {\bf\ref{Green}}(i), we infer that $[S_1,X]^G=1=[X,S_1]^G$, 
thus $S_1$ is a direct summand of $X$, so that there is a trivial-source
module $W$ with character $\chi_{62}+\chi_{89}$, and thus
$W = S_2 + \alpha\cdot S_3 + 2\cdot S_4 + S_5$ as composition factors.

Next, from $\gen{\chi_{51},\chi_{62}+\chi_{89}}_G=0$ 
and {\bf\ref{S3}}(iii) we conclude that 
$[S_3,W]^G=0=[W,S_3]^G$, that is $S_3$ does not occur
neither in the socle nor the head of $W$.
Moreover, since both $S_2$ and $S_5$ are not liftable,
by {\bf\ref{DecompositionMatrixOfA}}, neither of them is a
trivial-source module, hence we infer that 
$[S_2,W]^G = 0 = [W,S_2]^G$ and $[S_5,W]^G = 0 = [W,S_5]^G$.
Thus we conclude that 
$W/{\mathrm{rad}}(W) \cong {\mathrm{soc}}(W) \cong S_4$,
in particular $W$ is indecomposable.
By making use of
{\bf\ref{CharacterValues}} and \cite[II Lemma 12.6(ii)]{Landrock}
we get that $P$ is a vertex of $W$.

Thus for the heart $Y:={\mathrm{rad}}(W)/{\mathrm{soc}}(W)$ of $W$
we have $Y = S_2+S_5+\alpha\cdot S_3$ as composition factors. 
Now assume that $\alpha\neq 0$. Then, by self-duality, we infer that
$[S_3,Y]^G=1=[Y,S_3]^G$. Hence, since 
$W/{\mathrm{rad}}(W) \cong {\mathrm{soc}}(W) \cong S_4$,
we infer that there are uniserial modules of shape
$$ \boxed{ \begin{matrix} S_3 \\ S_4 \end{matrix} } 
\quad\text{and}\quad
   \boxed{ \begin{matrix} S_4 \\ S_3 \end{matrix} } 
$$
contradicting {\bf\ref{ext}}.
Hence we infer $\alpha=0$, and $W$ has shape as asserted.

(iii)
Let $X$ be the projective-free part of $F(W)$.
Then, by {\bf\ref{S4}}(ii),
\cite[A.3 Lemma]{KoshitaniMuellerNoeske2011}, 
\cite[A.1 Lemma]{KoshitaniMuellerNoeske2011}, and 
{\bf\ref{TrivialSourceModulesInAprime}}(i), we get
that $X$ is a trivial-source $kG'$-module in $A'$ with vertex $P$ 
such that $[6,X]^{G'} \neq 0$. 
Hence, {\bf\ref{TrivialSourceModulesInAprime}}(i) yields 
the shape of $X$ as asserted. This, by {\bf\ref{S2}}(ii) 
and using stripping-off again, implies the statement on $F(S_5)$.
\end{proof}

\renewcommand{\proofname}{\bf Proof of Theorem {\ref{MoritaEquivalentToS6}}}
\begin{proof}
First, $A$ and $A'$ are splendidly stably equivalent of Morita type
via the functor $F$ by {\bf\ref{StableEquivalenceAandA'}} 
and {\bf\ref{strategy}}. Moreover, by the choice in 
{\bf\ref{choice}}, as well as
{\bf\ref{S1}}(ii), {\bf\ref{S2}}(ii), {\bf\ref{S3}}(ii),
{\bf\ref{S4}}(ii), and {\bf\ref{S5}}(iii), all simple $A$-modules are sent
to simple $A'$-modules via the functor $F$.
Hence, \cite[Theorem 2.1(iii)]{Linckelmann1996MathZ} yields
that $A$ and $A'$ are Morita equivalent, and hence are Puig equivalent.
\end{proof}

\renewcommand{\proofname}{\bf Proof of Theorem {\ref{MainTheorem}}}
\begin{proof}
Just as in \cite[(6.13)]{KoshitaniKunugiWaki2008}, we conclude that
$A$ and $B$ are splendidly derived equivalent, by using
the splendid derived equivalence between $A'$ and $B'$ given
in \cite[Example 4.4]{Okuyama1997} and \cite[Corollary 2]{Okuyama2000}.
\end{proof}

\renewcommand{\proofname}
             {\bf Proof of Corollary {\ref{ADGCforCo1forAllBlocks}}}
\begin{proof}
Using the character table of $G$, {\sf GAP} \cite{GAP} shows
that $G$ has five $3$-blocks, of defect $9$, $3$, $2$, $1$, and $0$.
While the former two, according to \cite{Noeske2008},
have non-abelian defect groups, the latter two have
cyclic and trivial defect groups, respectively,
for which both Conjectures {\bf\ref{ADGC}} and {\bf\ref{RickardConjecture}}
are well-known to hold. Hence in this respect the only block of 
interest is the one of defect $2$ under consideration.
\end{proof}

\renewcommand{\proofname}{\bf Proof of {\ref{OkuyamaWaki}}}
\begin{proof}
This follows from {\bf\ref{MoritaEquivalentToS6}} and
\cite[Theorem 2.3]{OkuyamaWaki1998}, see 
\cite[Remark 3.7]{Okuyama1997}.
\end{proof}
\renewcommand{\proofname}{\it Proof}

\bigskip

{\begin{center}
{\small\sc Acknowledgements}
\end{center}}

\small\noindent {\rm
This work was done while the first author was
staying in RWTH Aachen University in 2011 and 2012. 
He is grateful to Gerhard Hiss for his kind hospitality.
For this research the first author was partially
supported by the Japan Society for Promotion of Science (JSPS),
Grant-in-Aid for Scientific Research (C)23540007, 2011--2014. 

The second author is grateful for financial support in
the framework of the DFG (German Science Foundation) Scientific Priority
Programmes SPP-1388 `Representation Theory' and SPP-1489 `Computer Algebra',
to which this research is a contribution.}

\bigskip

\end{document}